\newtheorem{thm}{Theorem}[section]
\newtheorem{prop}[thm]{Proposition}
\newtheorem{cor}[thm]{Corollary}
\newtheorem{lem}[thm]{Lemma}
\newtheorem{dfn}[thm]{Definition}
\newtheorem{rmk}[thm]{Remark}
\numberwithin{equation}{section}
\newcommand{\cM}{\mathcal{M}}
\newcommand{\cH}{\mathcal{H}}
\newcommand{\id}{\textrm{id}}
\newcommand{\cB}{\mathcal{B}}
\newcommand{\wot}{\overline{\otimes}}
\newcommand{\bn}{\Bbb N}
\newcommand{\la}{\langle}
\newcommand{\ra}{\rangle}
\newcommand{\ot}{\otimes}
\newcommand{\nphi}{\mathfrak{n}_{\varphi}}
\newcommand{\npsi}{\mathfrak{n}_{\psi}}
\newcommand{\mphi}{\mathfrak{m}_{\varphi}}
\newcommand{\Aut}{\textrm{Aut}}
\newcommand{\cN}{\mathcal{N}}
\newcommand{\Kil}{\mathsf{K}}
\date{\noindent \today.
 }
\title[Haagerup property for arbitrary von Neumann algebras]{The Haagerup property for arbitrary von Neumann algebras}
\date{\noindent \today.
 }
\author{Martijn Caspers}
 \address{M. Caspers, Fachbereich Mathematik und Informatik der Universit\"at M\"unster,
Einsteinstrasse 62,
48149 M\"unster, Germany}
 \email{martijn.caspers@uni-muenster.de}
\thanks{MC is supported by the grant SFB 878 ``{\it Groups, geometry and actions}''}
\author{Adam Skalski}
\address{Institute of Mathematics of the Polish Academy of Sciences,
ul.~\'Sniadeckich 8, 00--956 Warszawa, Poland \newline \indent Faculty of Mathematics, Informatics and Mechanics, University of Warsaw, ul.~Banacha 2,
02-097 Warsaw, Poland}
\thanks{AS is partially supported by the Iuventus Plus grant IP2012 043872.}
\email{a.skalski@impan.pl}
\keywords{Haagerup property, von Neumann algebra, approximation properties, crossed product}
\subjclass[2010]{Primary 46L10}
\begin{document}

\begin{abstract}
We introduce a natural generalization of the Haagerup property of a finite von Neumann algebra to an arbitrary von Neumann algebra (with a separable predual) equipped with a normal, semi-finite, faithful weight and prove that this property  does not depend on the choice of the weight. In particular this defines the Haagerup property as an intrinsic invariant of the von Neumann algebra. We also show that such a generalized Haagerup property is preserved under taking crossed products by actions of amenable locally compact groups.

Our results are motivated by recent examples from the theory of discrete quantum groups, where the Haagerup property appears {\it a priori} only with respect to the Haar state.
\end{abstract}

\maketitle

\section{Introduction}

In \cite{HaaFree} Haagerup showed that the reduced group C$^\ast$-algebras of the free groups have the completely contractive approximation property (CCAP) (or the  metric approximation property in the sense of Grothendieck). In order to do so, he constructed a sequence of positive definite normalized $C_0$-functions $\{ \varphi_k \}_{k\in \bn}$ on the free group that converges to the identity function pointwise. The resulting property for arbitrary  groups, replacing pointwise convergence by uniform convergence on compacta, is nowadays commonly called the {\it Haagerup property}. In the literature this approximation property sometimes has alternative names, such as {\it Property H}  or {\it Gromov's a-T-menability}.  We refer to the book \cite{book} for further background.

\vspace{0.3cm}

The Haagerup property can  also be defined for a finite von Neumann algebra equipped with a fixed faithful tracial normal state, with the original motivation coming this time from the study of cocycle actions on von Neumann algebras \cite{CoJ}.  The following Defintion \ref{Dfn=HPTrace} is one of the equivalent ones \cite{Jol}, closest to the setup developed further in this article.

\begin{dfn}\label{Dfn=HPTrace}
Let $\cM$ be a finite von Neumann algebra and let $\tau$ be a faithful, normal tracial state on $\cM$. Then $(\cM, \tau)$ has the {\it Haagerup property} if there exists a sequence of normal, completely positive maps $\{\Phi_k: \cM \rightarrow \cM \}_{k \in \mathbb{N}}$ such that $\tau \circ \Phi_k \leq \tau$ and moreover the $L^2$-maps $x \Omega_\tau \mapsto \Phi_k(x) \Omega_\tau$ with $x \in \cM$ extend to compact maps converging to 1 strongly. Here $\Omega_\tau$ is the cyclic vector in the GNS-representation, so that $\tau(\: \cdot \:) = \langle \: \cdot \:\: \Omega_\tau,   \Omega_\tau \rangle$.
\end{dfn}

Consider the group von Neumann algebra of a discrete group equipped with the canonical tracial vector state given by the Dirac mass at the identity. Then, in \cite{Cho} it is proved that the (equivalently defined) von Neumann algebraic Haagerup property agrees with the Haagerup property of a discrete group. Later, the Haagerup property was considered in \cite{Bo93}, \cite{A-D}, and then studied  in detail in \cite{Jol}. In particular, Jolissaint \cite{Jol} proved that the Haagerup property is independent of the choice of the normal tracial state. He also showed that if $(\cM, \tau)$ has the Haagerup property, then the completely positive maps may be chosen unital and trace preserving, i.e. $\tau \circ \Phi_k = \tau$.

\vspace{0.3cm}

Further facts concerning the Haagerup property for finite von Neumann algebras can be found for example in \cite{JolMar} and \cite{BF}. In particular, \cite{BF},  in the same spirit as Jolissaint's result mentioned above, shows that the subtraciality condition can be  dropped completely in fact (but one still has to assume that the maps $\Phi_k$ admit bounded -- in fact, compact -- extensions to the relevant $L^2$-space). The paper \cite{BF} contains also a version of this result for correspondences.

\vspace{0.3cm}

Recent developments motivate the study of the Haagerup property beyond the finite case, see \cite{A-D-II}, \cite{HoudRic}, \cite{DawFimSkaWhi} and \cite{ComFreYam}. In \cite{DawFimSkaWhi} the Haagerup property was studied for locally compact quantum groups, where it appeared via replacing the normal, tracial state in the definition by the Haar state of a compact quantum group $\mathbb{G}$ (which is a faithful, normal, but in general non-tracial state on the von Neumann algebra $L^{\infty}(\mathbb{G})$). This naturally raises the question whether such a generalized Haagerup property is independent of the choice of state -- note in particular that for a general von Neumann algebra  there is no canonical choice for the state. In \cite{ComFreYam}, \cite{HoudRic}, \cite{A-D-II} concrete examples of  non-semi-finite von Neumann algebras with the Haagerup property understood as above were found. It is worth to note that similarly as the `finite' Haagerup property was motivated by the study of von Neumann algebras of discrete groups, the one for arbitrary von Neumann algebras appears naturally if one studies algebras associated to objects generalizing classical groups: quantum groups, as in \cite{DawFimSkaWhi}, or spaces of cosets, as in \cite{A-D-II}.

\vspace{0.3cm}

This article considers the following definition of the Haagerup property. We refer to Definition \ref{Dfn=HP} for details on the $L^2$-implementations.

\begin{dfn}\label{Dfn=HPIntro}
A pair $(\cM, \varphi)$ of a von Neumann algebra $\cM$ with a separable predual and normal, semi-finite, faithful weight $\varphi$ has the Haagerup property if there exists a sequence of normal, completely positive maps $\{ \Phi_k: \cM \rightarrow \cM \}_{k \in \mathbb{N}}$ such that $\varphi \circ \Phi_k \leq \varphi$ and such that their $L^2$-implementations are compact and converge to 1 strongly.
\end{dfn}

It deserves to be emphasized that Definition \ref{Dfn=HPIntro} agrees with the Haagerup property for finite von Neumann algebras in case $\varphi$ is a normal, tracial state. We summarize now the main results of this article. Firstly, we have the following theorem.

\begin{thm}\label{Thm=HPIntro}
Let $(\cM, \varphi)$ and $(\cM, \psi)$ be as in Definition \ref{Dfn=HPIntro}. $(\cM, \varphi)$ has the Haagerup property if and only if $(\cM, \psi)$ has the Haagerup property. That is, the Haagerup property is an intrinsic invariant of $\cM$.
\end{thm}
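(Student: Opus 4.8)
The plan is to reduce, by the evident symmetry, to showing that the Haagerup property of $(\cM,\varphi)$ implies that of $(\cM,\psi)$, and then to transport a witnessing sequence of completely positive maps from the GNS representation of $\varphi$ to that of $\psi$. First I would recall that the two GNS Hilbert spaces $\Hil_\varphi$ and $\Hil_\psi$ both realise the standard form of $\cM$, so that there is a canonical $\cM$--$\cM$-bimodular unitary $u\colon\Hil_\varphi\to\Hil_\psi$ identifying each with $L^2(\cM)$. The whole difficulty is that the $L^2$-implementation $T^\varphi_\Phi\colon\Lambda_\varphi(x)\mapsto\Lambda_\varphi(\Phi(x))$ of a map with $\varphi\circ\Phi\le\varphi$ lives on $\Hil_\varphi$, and the sub-invariance condition is not preserved when one merely conjugates by $u$. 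The precise discrepancy between the two modular structures is measured by the relative modular operator $\Delta_{\psi,\varphi}$ and by the Connes cocycle $[D\psi:D\varphi]_t$, which is the object I would use to correct for it.

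The first technical step I would carry out is a regularisation of a given witnessing sequence $\{\Phi_k\}$ for $\varphi$ by averaging along the modular flow: for a Gaussian approximate identity $\{f_n\}$ put
\[
\Phi_k^{(n)}(x)=\int_{\br} f_n(t)\,\sigma^\varphi_{-t}\bigl(\Phi_k(\sigma^\varphi_t(x))\bigr)\,dt .
\]
Such an average is again normal and completely positive, still satisfies $\varphi\circ\Phi_k^{(n)}\le\varphi$, and, crucially, is analytic for $\sigma^\varphi$. On the level of $L^2$-implementations the averaging replaces $T^\varphi_{\Phi_k}$ by $\int f_n(t)\,\Delta_\varphi^{it}T^\varphi_{\Phi_k}\Delta_\varphi^{-it}\,dt$, which is a norm-limit of averages of a compact operator over the strongly continuous unitary group $t\mapsto\Delta_\varphi^{it}$ against an $L^1$ density, hence again compact; letting $f_n\to\delta_0$ recovers strong convergence to $1$. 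Thus, after a diagonal extraction, I may assume the witnessing maps to be $\sigma^\varphi$-analytic.

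With analyticity in hand, the second step is the cocycle twist. Using the analytic continuation of $[D\psi:D\varphi]_t$ to imaginary time I would define a twisted family $\Psi_k^{(n)}$, obtained by inserting the cocycle so as to convert $\varphi$-sub-invariance into $\psi$-sub-invariance; concretely this is the transport of $\Phi_k^{(n)}$ through the spatial derivative relating the two weights. The aim is to show that $\Psi_k^{(n)}$ is again a normal completely positive map on $\cM$ with $\psi\circ\Psi_k^{(n)}\le\psi$, and that its $\psi$-implementation equals $u\,T^\varphi_{\Phi_k^{(n)}}\,u^\ast$ up to a bounded, explicitly controlled correction coming from $\Delta_{\psi,\varphi}$. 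Compactness is then inherited from that of $T^\varphi_{\Phi_k^{(n)}}$, and a further diagonal choice of $k$ and $n$ produces a sequence whose $\psi$-implementations converge strongly to $1$ on $\Hil_\psi$, which is exactly the Haagerup property for $(\cM,\psi)$.

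The step I expect to be the main obstacle is precisely this last transport: the cocycle twist is governed by the unbounded relative modular operators, so one must check both that the twisted maps remain genuinely completely positive normal maps on $\cM$---rather than mere operators on $L^2(\cM)$---and that the compactness of their implementations survives the procedure. The preliminary smearing over the modular group is what makes these unbounded objects manageable, turning the formal imaginary-time substitutions into bona fide bounded operations; verifying the required relative-modular estimates, and checking that the weight sub-invariance is respected throughout, is where the real work lies.
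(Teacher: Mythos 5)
Your first step (smearing along the modular flow) is sound: the averaged maps remain normal, completely positive and $\varphi$-subinvariant, and since $t\mapsto\Delta_\varphi^{it}T\Delta_\varphi^{-it}$ is \emph{norm} continuous for compact $T$, the averaged implementation is a norm limit of convex combinations of compacts, hence compact. The genuine gap is exactly where you predicted: the cocycle twist in your second step does not exist for a general pair of weights. The analytic continuation $[D\psi:D\varphi]_{-i/2}$ admits a bounded (or even usefully closable, $\cM$-compatible) extension essentially only when $\psi\leq\lambda\varphi$ for some $\lambda>0$, and on a type III algebra two arbitrary normal semi-finite faithful weights satisfy no such domination. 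Moreover, the obstruction cannot be removed by cutting with projections as in the semi-finite case: there the Radon--Nikodym derivative $h$ is a positive self-adjoint operator \emph{affiliated with $\cM$}, so its spectral projections $e_n$ lie in $\cM$ (indeed in the centralizer of the trace) and one can reduce to bounded, boundedly invertible $h$ on the corners $e_n\cM e_n$. For the Connes cocycle, the relevant ``derivative'' is encoded in the relative modular operator $\Delta_{\psi,\varphi}$, whose spectral projections do not belong to $\cM$, so there is no corner inside $\cM$ on which the two weights become comparable, and conjugating a cp map by the resulting unbounded operators does not yield a map $\cM\to\cM$ at all. Your smearing regularizes with respect to $\sigma^\varphi$ only; it does nothing to control the \emph{relative} modular data linking $\varphi$ and $\psi$, which is the actual obstruction.

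The paper's proof supplies precisely the missing idea: it first proves weight-independence for \emph{semi-finite} $\cM$, using exactly your twist formula $x\mapsto h^{-\frac{1}{2}}\Psi_k(h^{\frac{1}{2}}xh^{\frac{1}{2}})h^{-\frac{1}{2}}$ but with $h$ the Radon--Nikodym derivative with respect to a trace, made bounded and invertible by cutting with its spectral projections (Proposition \ref{Prop=CocycleInvariance}, Lemma \ref{Lem=HPCorner}). For general $\cM$ it then passes to the core: it shows the Haagerup property transfers between $(\cM,\varphi)$ and $(\cM\rtimes_{\sigma^\varphi}\mathbb{R},\tilde{\varphi})$ (Theorem \ref{Thm=HPCrossedProduct}, using Takesaki duality $(\cM\rtimes_{\sigma^\varphi}\mathbb{R})\rtimes_\theta\mathbb{R}\simeq\cM\,\wot\,\cB(L^2\mathbb{R})$ and dual weights), invokes the canonical isomorphism $\cM\rtimes_{\sigma^\varphi}\mathbb{R}\simeq\cM\rtimes_{\sigma^\psi}\mathbb{R}$ of \cite{Tak73}, and applies the semi-finite result on the core, which carries a trace. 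So to repair your argument you would need either to import this crossed-product reduction, or to develop a standard-form substitute in the style of \cite{OkaTom}; the direct imaginary-time cocycle transport, as stated, fails at the definitional level.
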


The strategy for our proof is as follows. We first prove Theorem \ref{Thm=HPIntro} for semi-finite von Neumann algebras. This can be done using Radon-Nikodym derivatives. For the general case, we rely on crossed product duality techniques, partially inspired by \cite{HaaKrau}. We prove that the Haagerup property is preserved under taking  the crossed product by the modular automorphism group of a weight and use this to derive weight independence from the semi-finite case. At this point it is essential to consider the Haagerup property with respect to  an arbitrary {\it weight}; it would have been insufficient to restrict Definition \ref{Dfn=HPIntro} to the case of a normal, faithful state. The reason is that given a state on a von Neumann algebra, its dual weight on the crossed product is not finite anymore. After deriving weight independence (Theorem \ref{Thm=HPIntro}) we are able to improve on our initial crossed product results and arrive at the following theorem.

\begin{thm}\label{Thm=HPCrossedProdIntro}
Let $\cM$ be a von Neumann algebra and let $\alpha: G \rightarrow \Aut(\cM)$ be any strongly continuous action of a second countable locally compact group $G$ on $\cM$ by automorphisms. If the crossed product $\cM\rtimes_\alpha G$ has the Haagerup property, then so does $\cM$.
\end{thm}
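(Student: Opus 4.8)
The plan is to treat an arbitrary second countable locally compact $G$ by reducing, via the Takesaki--Takai duality isomorphism
\[
(\cM \rtimes_\alpha G) \rtimes_{\hat\alpha} \hat{G} \;\cong\; \cM \,\overline{\otimes}\, B(L^2(G)),
\]
to two stability statements: (a) the Haagerup property passes to a corner cut by a projection of full central support, and (b) the Haagerup property is preserved by the crossed product by a \emph{dual} coaction. Writing $\cN := \cM \rtimes_\alpha G$, the argument then runs: if $\cN$ has the property then so does $\cN \rtimes_{\hat\alpha} \hat G \cong \cM \,\overline{\otimes}\, B(L^2(G))$, and the latter in turn forces it for $\cM$. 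No amenability of $G$ is assumed anywhere; the averaging that amenability would supply in the forward direction is replaced here by the special, regularly implemented structure of the dual coaction $\hat\alpha$. Throughout, Theorem \ref{Thm=HPIntro} (weight independence) is what lets the three steps be composed, since each step dictates the weight that must be used on the algebra entering it.

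For the last implication I would argue by compression. Fix a rank one projection $e \in B(L^2(G))$; then $1 \otimes e$ has central support $1$ and $(1 \otimes e)\bigl(\cM \,\overline{\otimes}\, B(L^2(G))\bigr)(1 \otimes e) \cong \cM$. Using Theorem \ref{Thm=HPIntro} I may equip $\cM \,\overline{\otimes}\, B(L^2(G))$ with the product weight $\Phi = \varphi \otimes \mathrm{Tr}$, whose restriction to this corner is a prescribed normal semi-finite faithful weight $\varphi$ on $\cM$. Given Haagerup maps $\Theta_k$ for $(\cM \,\overline{\otimes}\, B(L^2(G)), \Phi)$, the compressions $y \mapsto (1 \otimes e)\Theta_k(y)(1 \otimes e)$ are normal and completely positive on the corner; they do not increase $\Phi$, because $\Phi((1 \otimes e) z (1 \otimes e)) \le \Phi(z)$ for $z \ge 0$ (the vector functional defining $e$ is dominated by $\mathrm{Tr}$) together with $\Phi \circ \Theta_k \le \Phi$; and their $L^2$-implementations are the compressions of the compact implementations of $\Theta_k$ to the subspace $(1 \otimes e)L^2(\cM \,\overline{\otimes}\, B(L^2(G)), \Phi)$, hence compact and strongly convergent to $1$. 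The same compression, run the other way, gives the routine converse needed to read off the middle term of the chain.

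The real content is step (b): that if $\cN$ has the Haagerup property then so does the crossed product $\cN \rtimes_{\hat\alpha} \hat G$ by the dual coaction. Here the bare hypothesis supplies the property for \emph{some} weight on $\cN$, whereas the duality forces a specific one, namely the weight whose dual weight for $\hat\alpha$ matches $\varphi \otimes \mathrm{Tr}$ on $\cM \,\overline{\otimes}\, B(L^2(G))$; this is precisely where Theorem \ref{Thm=HPIntro} is indispensable, upgrading the hypothesis to Haagerup maps $\Psi_k$ adapted to that weight. I would then transfer the $\Psi_k$ into $\cN \rtimes_{\hat\alpha} \hat G \subseteq \cN \,\overline{\otimes}\, B(L^2(G))$ as follows. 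First arrange the $\Psi_k$ to be \emph{covariant} for $\hat\alpha$, i.e. $\hat\alpha \circ \Psi_k = (\Psi_k \otimes \id) \circ \hat\alpha$; for a dual coaction this can be done using its implementation by the regular representation, exactly as Fourier multipliers are the covariant maps on a group von Neumann algebra, and it is this implementation that stands in for amenability. A covariant, unital $\Psi_k$ maps $\hat\alpha(\cN)$ into itself and fixes $1 \otimes L^\infty(G)$, so $\Psi_k \otimes \id$ restricts to a normal completely positive weight-contractive map on $\cN \rtimes_{\hat\alpha} \hat G$. Its $L^2$-implementation is, however, only $\hat\Psi_k \otimes 1$, which is not compact; compactness in the adjoined $L^\infty(G)$-directions is recovered by further compressing with the multiplication projections $M_{\chi_K}$, $K \subseteq G$ compact, which lie in $1 \otimes L^\infty(G) \subseteq \cN \rtimes_{\hat\alpha} \hat G$. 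Letting $K$ exhaust $G$ in step with $k$, the combined maps stay completely positive and weight-contractive while their implementations become compressions of $\hat\Psi_k$ by finite cutoffs in the $G$-variable, hence compact and strongly convergent to $1$.

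The hard part is step (b), and within it the twin requirements that (i) the Haagerup maps on $\cN$ can be chosen covariant (and suitably unital) for the dual coaction without destroying compactness or the dominating-weight condition, and (ii) the compactness coming from $\hat\Psi_k$ in the $\cN$-directions and that coming from the $L^\infty(G)$-cutoffs in the $G$-directions genuinely assemble into a single compact operator on the GNS space of the dual weight, with domination by that dual weight surviving the Takesaki--Takai identification. This is the analytic heart of the matter; everything else, including the two duality isomorphisms and the corner argument, is formal once weight independence is available. The covariant-transfer mechanism used here is in the spirit of the crossed-product duality techniques of \cite{HaaKrau}.
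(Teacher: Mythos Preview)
Your plan has a genuine gap, located precisely where you yourself identify the ``analytic heart'': step (b), the passage from $\cN$ to $\cN \rtimes_{\hat\alpha} \hat G$. You assert that the Haagerup maps $\Psi_k$ on $\cN$ can be arranged to be covariant for the dual coaction $\hat\alpha$, citing the regular-representation implementation as the mechanism. But implementation by a unitary does not render arbitrary cp maps covariant; covariance for $\hat\alpha$ is a stringent structural constraint. Already for $\cM = \mathbb{C}$, where $\cN = \mathcal{L}(G)$ and $\hat\alpha$ is the comultiplication, the $\hat\alpha$-covariant cp maps are exactly the Fourier multipliers $\lambda(g) \mapsto \phi(g)\lambda(g)$, and the claim that the Haagerup property of $\mathcal{L}(G)$ can always be witnessed by such multipliers is itself a theorem requiring proof, not a formality. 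There is no general averaging that converts an $L^2$-compact cp map into a $\hat\alpha$-covariant one while preserving compactness and the weight inequality; the sentence ``it is this implementation that stands in for amenability'' is an aspiration rather than an argument. Step (b) is thus the \emph{forward} (ascent) direction of crossed-product stability, which elsewhere in the paper genuinely needs amenability, and you have not supplied a substitute.

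The paper avoids this by reversing the easy and hard steps. Instead of the hard ascent $\cN \rightsquigarrow \cN \rtimes_{\hat\alpha} \hat G$ followed by an easy tensor descent to $\cM$, it proves a direct \emph{descent} from a crossed product to its base algebra, using the operator-valued weight $T:\cN^+ \to \pi(\cM)^+_{\rm ext}$. For suitable $f$ with $\lambda(f)\ge 0$ the map $y\mapsto \pi^{-1}\circ T(\lambda(f)\,y\,\lambda(f))$ is a normal cp map $\cN\to\cM$, and sandwiching Haagerup maps on the corners $e_f\cN e_f$ (which inherit the property once weight-independence is available) between this and the contractive inclusion $L^2(\cM,\varphi)\hookrightarrow L^2(e_f\cN e_f, \tilde\varphi_f)$ yields the required compact operators on $L^2(\cM,\varphi)$, converging strongly to $1$ as $|f|^2$ concentrates at $e\in G$. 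When $\alpha$ admits no invariant weight one first replaces $(\cM,\alpha)$ by $(\cM\wot B(L^2G),\beta)$, which always carries the $\beta$-invariant weight $\psi\circ S$, via the isomorphism $(\cM\wot B(L^2G))\rtimes_\beta G\cong \cN\wot B(L^2G)$, and then descends to $\cM$ through the tensor factor. No covariance of the approximating maps is ever required.
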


Finally, we find the converse of this result if $G$ is amenable.

\begin{thm}\label{Thm=ToCrossedProductIntro}
Let $G$ be amenable. If $\cM$ has the Haagerup property, then so does $\cM \rtimes_\alpha G$.
\end{thm}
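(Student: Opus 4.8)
The plan is to verify Definition~\ref{Dfn=HPIntro} for the crossed product $\cN:=\cM\rtimes_\alpha G$ with respect to a conveniently chosen weight, exploiting the weight--independence already established in Theorem~\ref{Thm=HPIntro}. Concretely, fix a normal, semi-finite, faithful weight $\varphi$ on $\cM$, let $\tilde\varphi$ be its dual weight on $\cN$, and use the standard identification of the GNS space $L^2(\cN,\tilde\varphi)$ with $L^2(G)\ot L^2(\cM,\varphi)$, under which $\pi_\alpha(x)\lambda_g$ acts in the usual twisted way and the canonical elements $\int_G h(g)\pi_\alpha(x)\lambda_g\,dg$ (with $h\in C_c(G)$, $x\in\nphi$) are sent to $h\ot\Lambda_\varphi(x)$ up to modular normalisation. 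The Haagerup property will then be produced by combining two families of completely positive maps: one coming from the Haagerup property of $(\cM,\varphi)$, responsible for compactness in the $L^2(\cM,\varphi)$--factor, and one coming from the amenability of $G$, responsible for compactness in the $L^2(G)$--factor.

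First I would fix Haagerup maps $\{\Phi_k\}$ for $(\cM,\varphi)$: normal, completely positive, with $\varphi\circ\Phi_k\le\varphi$ and $L^2$--implementations $T_k$ compact and converging to $1$ strongly. Because $G$ is amenable and second countable, I would choose a F\o lner--type sequence of functions $f_i\in C_c(G)$ with $\|f_i\|_2=1$ that is asymptotically invariant, i.e.\ $\|\lambda_s f_i-f_i\|_2\to 0$ uniformly for $s$ in compacta. The role of $f_i$ is twofold. On the one hand, averaging against the rank--one positive-definite kernel $\overline{f_i(s)}f_i(t)$ on $G\times G$ manufactures operators that are compact (indeed essentially rank one) in the $L^2(G)$--variable; on the other hand, asymptotic invariance makes such an averaging tend to the identity and approximately commute with the modular and crossed--product structure.

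The maps themselves I would define, for each pair $(i,k)$, by an expression of the shape
\begin{equation*}
\Psi_{i,k}(T)=\int_G\int_G \overline{f_i(s)}\,f_i(t)\;\lambda_s\,(\Phi_k\ot\id)\!\left(\lambda_s^{*}\,T\,\lambda_t\right)\lambda_t^{*}\;ds\,dt,
\end{equation*}
where $\Phi_k\ot\id$ denotes the extension of $\Phi_k$ to a normal completely positive map on $\cM\wot B(L^2(G))\supseteq\cN$. Complete positivity of $\Psi_{i,k}$ would follow from positive-definiteness of the kernel $\overline{f_i(s)}f_i(t)$ together with complete positivity of $\Phi_k\ot\id$ (a Stinespring/averaging argument). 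One then checks that $\Psi_{i,k}$ actually maps $\cN$ into $\cN$, that $\tilde\varphi\circ\Psi_{i,k}\le\tilde\varphi$ (using $\varphi\circ\Phi_k\le\varphi$, $\|f_i\|_2=1$ and the defining formula of the dual weight), and that the $L^2$--implementation of $\Psi_{i,k}$ factors, up to controlled error, as a compact operator in the $L^2(G)$--factor (produced by $f_i$) tensored with the compact operator $T_k$ in the $L^2(\cM,\varphi)$--factor; a tensor product of two compact operators being compact, this yields the required compactness. Finally a diagonal choice $i=i(k)\to\infty$, $k\to\infty$, exploiting $T_k\to 1$ strongly and the asymptotic invariance of $f_i$, gives strong convergence of the implementations to $1$.

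The main obstacle, and the place where amenability is genuinely used, is the interaction between the lack of $G$--equivariance of the maps $\Phi_k$ and the demand for \emph{compact} (not merely bounded multiplier) implementations in the group direction. A naive extension $\Phi_k\ot\id$ does not preserve $\cN$ precisely because $\Phi_k$ need not intertwine the action, and, as the case of multipliers on $L(G)$ for non-discrete $G$ shows, localisation in the group variable alone produces multiplication operators that are never compact. The F\o lner averaging against $\overline{f_i(s)}f_i(t)$ is designed to resolve both issues at once: the rank--one kernel supplies honest compactness in $L^2(G)$, while asymptotic invariance forces the averaged map back into $\cN$ and restores equivariance in the limit. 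Carefully quantifying these two approximations simultaneously---so that complete positivity, the weight domination $\tilde\varphi\circ\Psi_{i,k}\le\tilde\varphi$, compactness, and strong convergence all hold along the same diagonal net---is the technical heart of the argument.
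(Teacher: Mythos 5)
Your overall strategy (F\o lner averaging combined with the Haagerup maps of $(\cM,\varphi)$) points in the right direction, but the central formula is of the wrong shape, and the failure is structural rather than technical. Test it on $\cM=\bc$ with $\Phi_k=\id$ (a legitimate witness of the Haagerup property of $\bc$): then $\Phi_k\ot\id$ is the identity on $\cB(L^2 G)$ and
\[
\Psi_{i,k}(T)=\int_G\int_G\overline{f_i(s)}\,f_i(t)\,\lambda_s\lambda_s^{*}\,T\,\lambda_t\lambda_t^{*}\,ds\,dt=\Bigl|\int_G f_i(s)\,ds\Bigr|^2\,T,
\]
a scalar multiple of the identity map on $\cN=\mathcal{L}(G)$ (with scalar $\vert F_i\vert\to\infty$ for $L^2$-normalized indicator-type F\o lner functions). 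So your maps produce no compactness whatsoever in the group direction --- the group-leg dependence of $\lambda_s^{*}T\lambda_t$ passes through $\Phi_k\ot\id$ untouched and is exactly undone by the outer conjugation --- and the domination $\tilde\varphi\circ\Psi_{i,k}\le\tilde\varphi$ fails as well. Two further gaps: for \emph{fixed} $i$ your map does not take $\cN$ into $\cN$ (asymptotic invariance only restores equivariance ``in the limit'', whereas Definition \ref{Dfn=HP} requires genuine maps on $\cN$); and you correctly observe that localisation in the group variable yields non-compact multiplication/multiplier operators for non-discrete $G$, but your rank-one kernel contains no ingredient that escapes this --- nothing in the construction supplies an honestly compact operator in the $L^2(G)$-factor.

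The paper's proof supplies both missing mechanisms differently. Exact range in $\cN$ is achieved by composing with the operator-valued weight $S=\int_G\beta_t(\,\cdot\,)\,dt$, the average over the \emph{dual} action, whose fixed-point algebra is exactly $\cN$: one uses the Haagerup--Kraus maps $S_{f_n}(y)=S(\nu(\overline{f_n})\,y\,\nu(f_n))$ with $f_n=g_n^{1/2}$ for an approximating sequence $(g_n)$, and the F\o lner property enters only through Lemma \ref{Lem=SFLStrongToOne}, i.e.\ to show $\Lambda_{\tilde\varphi}(S_{f_n}(y))\to\Lambda_{\tilde\varphi}(y)$. Compactness in the group direction does not come from $f_n$ at all: it comes from the Haagerup property of the corner $e_n(\cM\wot\cB(L^2 G))e_n$ with $e_n=\nu(\chi_{F_n})$, which is available because $(\cB(L^2 G),{\rm Tr})$ has the Haagerup property, tensor products and corners preserve it, and --- crucially --- Theorem \ref{Thm=HPIntro} (weight independence) allows one to use the transported weight $\tilde\varphi\circ S_{f_n}$ on that corner. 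The corner's compact approximants $T_k^{(n)}$ are then conjugated by the contractions $V_n$ (compression to the corner) and $A_n$ (induced by $S_{f_n}$), yielding compact maps $S_k^{(n)}=A_nT_k^{(n)}V_n$ on $L^2(\cN,\tilde\varphi)$, followed by a diagonal argument as in your sketch. Finally, the paper first reduces to the case of an $\alpha$-invariant weight by replacing $(\cM,\alpha)$ with $(\cM\wot\cB(L^2 G),\beta)$, which always admits invariant weights of the form $\psi\circ S$ --- a step your dual-weight computations would also need but which your proposal does not address.
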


As mentioned, our arguments are to an extent inspired by results of Haagerup and Kraus \cite{HaaKrau}. Compared to \cite{HaaKrau}, it is the interaction between the  $L^\infty$- and $L^2$-level appearing in the Haagerup property that requires overcoming extra difficulties.

\vspace{0.3cm}

An alternative approach to the Haagerup property for an arbitrary von Neumann algebra, based on working with  standard forms, has been  developed simultaneously with our work by Rui Okayasu and Reiji Tomatsu, see \cite{OkaTom}. The approaches turn out to be equivalent and the proof of this fact relies on the stability properties of the Haagerup property with respect to the crossed product, see \cite{COST}. After completion of both these works \cite{OkaTom}, \cite{COST} other (slightly different) approaches and proofs of these equivalences were found, see \cite{OkaTomII}, \cite{CasSkaII} and in particular \cite[Section 4]{CasSkaII} for a list of equivalences.

\vspace{0.3cm}

\noindent {\bf Contents.} The structure of this paper is as follows. Section \ref{Sect=Preliminaries} fixes preliminary notation and results. Section \ref{Sect=HP} defines the Haagerup property and proves some auxiliary lemmas. In Section \ref{Sect=SemiFinite} we treat the semi-finite case of Theorem \ref{Thm=HPIntro}. Section \ref{Sect=TypeIII} settles the notation for crossed products and contains the main results. Here we prove Theorems \ref{Thm=HPIntro} and \ref{Thm=HPCrossedProdIntro}. Finally, Section \ref{Sect=ToCrossedProduct} proves Theorem \ref{Thm=ToCrossedProductIntro}.

\section{Preliminaries}\label{Sect=Preliminaries}

In this section we fix some of the terminology and recall certain known results that will be of use in the following sections.

\vspace{0.3cm}

\noindent {\bf Convention:} {All von Neumann algebras $\cM$ in this article are assumed to have separable preduals. 
In particular, this implies that each $\cM$ admits a faithful, normal state. Groups are supposed to be second countable.}

\vspace{0.3cm}


Given a von Neumann algebra $\cM$ with a faithful normal semi-finite weight $\varphi$ we write $\nphi$ for the left ideal $\{x\in \cM:\varphi(x^*x) <\infty\}$ and $L^2(\cM,\varphi)$ for the completion of $\nphi$ with respect to the scalar product $\la x, y \ra_{\varphi}= \varphi(y^*x)$, $(x,y \in \nphi)$. The GNS-embedding map from $\nphi$ into $L^2(\cM,\varphi)$ will be denoted by $\Lambda_\varphi$ or $\Lambda$ when it is clear which weight is involved, and the norm in $L^2(\cM,\varphi)$ by $\|\cdot\|_2$. Recall that $\Lambda_\varphi$ is a closed map with respect to $\sigma$-weak/weak topology. Furthermore, we write $\mphi$ for the linear span of $\nphi^* \nphi$, $(\sigma^{\varphi}_t)_{t\in \mathbb{R}}$ (or just $(\sigma_t)_{t\in \mathbb{R}}$) for the modular automorphism group of $\varphi$ and $J$ for the modular conjugation acting on $L^2(\cM,\varphi)$. The centralizer $\cM^{\varphi}$ is the von Neumann subalgebra of $\cM$ defined as $\{x\in \cM:\sigma^{\varphi}_t(x) =x, t \in \mathbb{R}\}$.  The $\sigma$-weak (ultraweak) tensor product of von Neumann algebras $\cM_1$ and $\cM_2$ will be denoted by $\cM_1 \wot \cM_2$. Generally speaking, we follow the notation of \cite{TakII}.

The following lemma is a standard application of the Kadison-Schwarz inequality.

\begin{lem}\label{Lem=KadisonSchwarz}
Let $\cM$ be a von Neumann algebra with two normal, semi-finite, faithful weights $\varphi$ and $\psi$. Let $\Phi: \cM \rightarrow \cM$ be a completely positive map such that $\varphi \circ \Phi \leq \psi$. Then, there exists a bounded map  $L^2(\cM, \psi) \rightarrow L^2(\cM, \varphi)$ determined by $\Lambda_\psi(x) \mapsto \Lambda_\varphi(\Phi(x)), x \in \npsi$. Its norm is not greater than $\Vert \Phi(1) \Vert^{\frac{1}{2}}$.
\end{lem}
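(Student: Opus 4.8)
The plan is to deduce everything from the Kadison--Schwarz inequality in its non-unital form, combined with the domination hypothesis $\varphi\circ\Phi\le\psi$. First I would record the operator inequality
\[
\Phi(x)^*\Phi(x)\le \|\Phi(1)\|\,\Phi(x^*x),\qquad x\in\cM.
\]
Since $\Phi$ is completely (hence $2$-) positive, applying $\Phi$ entrywise to the positive matrix $\left(\begin{smallmatrix}1&x\\x^*&x^*x\end{smallmatrix}\right)\ge 0$ produces $\left(\begin{smallmatrix}\Phi(1)&\Phi(x)\\ \Phi(x)^*&\Phi(x^*x)\end{smallmatrix}\right)\ge 0$. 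I then invoke the standard fact that positivity of a $2\times2$ operator matrix $\left(\begin{smallmatrix}A&B\\B^*&C\end{smallmatrix}\right)$ forces $B^*B\le\|A\|\,C$: substituting $\xi=-(A+\ep)^{-1}B\eta$ into the associated positive sesquilinear form gives the generalized Schur complement bound $B^*(A+\ep)^{-1}B\le C$, and since $(A+\ep)^{-1}\ge(\|A\|+\ep)^{-1}$ one has $B^*B\le(\|A\|+\ep)\,C$; letting $\ep\downarrow 0$ yields the claim with $A=\Phi(1)$, $B=\Phi(x)$, $C=\Phi(x^*x)$.

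The second step converts this into an $L^2$-estimate. For $x\in\npsi$ I apply the normal weight $\varphi$ to the operator inequality and use $\varphi\circ\Phi\le\psi$ together with monotonicity of $\varphi$ on $\cM_+$:
\[
\varphi\bigl(\Phi(x)^*\Phi(x)\bigr)\le\|\Phi(1)\|\,\varphi\bigl(\Phi(x^*x)\bigr)\le\|\Phi(1)\|\,\psi(x^*x)=\|\Phi(1)\|\,\|\Lambda_\psi(x)\|_2^2<\infty.
\]
In particular $\Phi(x)\in\nphi$, so that $\Lambda_\varphi(\Phi(x))$ is defined in the first place, and the computation gives directly $\|\Lambda_\varphi(\Phi(x))\|_2\le\|\Phi(1)\|^{1/2}\,\|\Lambda_\psi(x)\|_2$.

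It then remains to check well-definedness and to extend by density. Linearity of $\Phi$ and of $\Lambda_\varphi$ gives $\Lambda_\varphi(\Phi(x))-\Lambda_\varphi(\Phi(y))=\Lambda_\varphi(\Phi(x-y))$ for $x,y\in\npsi$, and the estimate above applied to $x-y$ shows that the value $\Lambda_\varphi(\Phi(x))$ depends only on $\Lambda_\psi(x)$ (taking $\Lambda_\psi(x)=\Lambda_\psi(y)$, i.e.\ $\Lambda_\psi(x-y)=0$, forces $\Lambda_\varphi(\Phi(x-y))=0$). Hence $\Lambda_\psi(x)\mapsto\Lambda_\varphi(\Phi(x))$ is a well-defined bounded linear map on the dense subspace $\Lambda_\psi(\npsi)\subseteq L^2(\cM,\psi)$ of norm at most $\|\Phi(1)\|^{1/2}$, and it extends uniquely to the asserted bounded map $L^2(\cM,\psi)\to L^2(\cM,\varphi)$ with the same norm bound. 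The only genuine point to get right is the non-unital Kadison--Schwarz inequality with the correct constant $\|\Phi(1)\|$ (the naive unital version is unavailable, as $\Phi$ need not be unital), together with the observation that this very step is what simultaneously guarantees $\Phi(\npsi)\subseteq\nphi$ so that the defining formula makes sense; everything else is the routine passage from a bound on a dense subspace to a bounded operator.
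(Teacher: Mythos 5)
Your proposal is correct and follows essentially the same route as the paper, whose proof consists precisely of applying the non-unital Kadison--Schwarz inequality $\Phi(x)^*\Phi(x)\le\Vert\Phi(1)\Vert\,\Phi(x^*x)$, then $\varphi$ and the hypothesis $\varphi\circ\Phi\le\psi$, to obtain $\Vert\Lambda_\varphi(\Phi(x))\Vert_2^2\le\Vert\Phi(1)\Vert\,\Vert\Lambda_\psi(x)\Vert_2^2$ for $x\in\npsi$ (which simultaneously shows $\Phi(x)\in\nphi$). Your extra material --- the $2$-positivity/Schur-complement derivation of the non-unital Kadison--Schwarz inequality and the well-definedness and density-extension checks --- simply makes explicit what the paper treats as standard.
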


We will usually denote the induced map in $\cB(L^2(\cM, \psi) \rightarrow L^2(\cM, \varphi))$ by $T$. Following \cite{Jol} if $(\cM, \varphi)$ are as above (and $\psi = \varphi$)  we will say that a completely positive map $\Phi: \cM \rightarrow \cM$ such that $\varphi \circ \Phi \leq \varphi$ is $L^2(\cM,\varphi)$\emph{-compact} if the induced map $T\in \cB(L^2(\cM, \varphi))$ is compact.

The following Fubini type lemma allows us to interchange the integration with respect to a weight. Its abstract, generalized $C^*$-algebraic version can be found for example in \cite[Proposition 4.6.6]{Vaesthesis}.

\begin{lem}\label{Lem=Fubini}
Let $\cM$ be a von Neumann algebra with normal, semi-finite, faithful weight $\varphi$. Let $(X, \mu)$ be measure space with Radon measure $\mu$ on a Borel measure space $X$.  Let $f \in X \rightarrow \cM^+$ be a compactly supported $\sigma$-weakly integrable function. Then,
\[
\int_X \varphi(f(s)) d\mu(s) = \varphi\left( \int_X f(s) d\mu(s) \right).
\]
\end{lem}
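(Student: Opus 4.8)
The plan is to reduce the statement to the case of a single normal positive functional, where it amounts to the defining property of the $\sigma$-weak integral, and then to recover the general weight by a monotone (Tonelli-type) argument exploiting positivity. First I would invoke the structure theory of normal semi-finite weights: since $\cM$ has separable predual, $\varphi$ can be written as a countable sum $\varphi = \sum_{n=1}^\infty \omega_n$ of normal positive functionals $\omega_n \in \cM_*^+$ (see \cite{TakII}), in the sense that $\varphi(x) = \sum_{n=1}^\infty \omega_n(x)$ for every $x \in \cM^+$, with values in $[0,\infty]$.

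For each fixed $n$ the function $s \mapsto \omega_n(f(s))$ is measurable and, since $f$ is compactly supported and $\sigma$-weakly integrable, integrable; moreover, by the very definition of the $\sigma$-weak integral $\int_X f(s)\, d\mu(s) \in \cM$ one has
\[
\omega_n\left( \int_X f(s)\, d\mu(s) \right) = \int_X \omega_n(f(s))\, d\mu(s).
\]
This settles the claim whenever $\varphi$ is replaced by a single normal positive functional, and requires no further work.

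Putting these together, I would apply the displayed identity term by term and use $\varphi = \sum_n \omega_n$ to get
\[
\varphi\left( \int_X f(s)\, d\mu(s) \right) = \sum_{n=1}^\infty \omega_n\left( \int_X f(s)\, d\mu(s) \right) = \sum_{n=1}^\infty \int_X \omega_n(f(s))\, d\mu(s),
\]
while on the other hand $\int_X \varphi(f(s))\, d\mu(s) = \int_X \sum_{n=1}^\infty \omega_n(f(s))\, d\mu(s)$. Thus the lemma reduces to interchanging the sum over $n$ with the integral over $X$. Here the decisive point is that $f(s) \in \cM^+$ and each $\omega_n$ is positive, so every integrand $\omega_n(f(s))$ is nonnegative; interpreting $\sum_n$ as integration against counting measure on $\mathbb{N}$, the function $(n,s) \mapsto \omega_n(f(s))$ is automatically product measurable (each slice is measurable and $\mathbb{N}$ is discrete), and the interchange is then legitimate by Tonelli's theorem applied in $[0,\infty]$, with no integrability hypothesis needed beyond measurability.

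The only genuine obstacle is the input from weight theory, namely the decomposition of $\varphi$ into a countable sum of normal positive functionals; everything after that is a routine application of positivity together with the definition of $\sigma$-weak integrability. In particular no cancellation, domination, or finiteness argument is required, precisely because both sides are \emph{a priori} valued in $[0,\infty]$, which is exactly why working with an arbitrary weight (rather than a state) causes no additional difficulty here.
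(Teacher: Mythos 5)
Your proof is correct, and it takes a genuinely different (though closely related) route from the paper's. The paper invokes the other half of the very normality theorem you appeal to, namely \cite[Theorem VII.1.11]{TakII}: it writes $\varphi(x)=\sup\{\omega(x)\mid \omega\in\cM_*^+,\ \omega\leq\varphi\}$, pushes each $\omega$ through the $\sigma$-weak integral via the defining identity $\omega\bigl(\int_X f\,d\mu\bigr)=\int_X\omega(f(s))\,d\mu(s)$ (exactly as you do), and then exchanges $\sup_\omega$ with $\int_X$. Your decomposition $\varphi=\sum_{n}\omega_n$ replaces that supremum by a countable sum, at the mild cost of using the paper's standing separable-predual convention to obtain countability; what this buys is that the crucial interchange becomes a literal instance of Tonelli's theorem for nonnegative functions, with no further input. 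By contrast, in the paper's computation only the direction $\sup_\omega\int\omega(f)\leq\int\sup_\omega\omega(f)$ is trivial; the reverse inequality tacitly uses that the dominated set $\{\omega\leq\varphi\}$ is upward directed (Haagerup's theorem, packaged into the cited result) together with a monotone convergence argument for increasing \emph{nets}, which in general itself calls for extracting a countable cofinal subfamily — so your positivity-plus-Tonelli version is arguably the more airtight of the two. A small bonus of your argument is that it establishes measurability of $s\mapsto\varphi(f(s))$ (as a countable sum of measurable functions), which the statement of the lemma tacitly presupposes. One point worth flagging: the cited theorem as stated produces a possibly uncountable family with $\varphi=\sum_i\omega_i$, and the refinement to a countable sequence is precisely where separability ($\sigma$-finiteness) enters, so that step deserves an explicit word rather than being attributed wholesale to the theorem.
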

\begin{proof}
Since $\varphi$ is normal, we may write \cite[Theorem VII.1.11]{TakII},
\[
\varphi(x) = \sup \left\{ \omega(x) \mid \omega \in A \right\}, \qquad {\rm with } \qquad
A = \left\{ \omega \in \cM_\ast^+ \mid \omega \leq \varphi \right\}.
\]
Then,
\[
\begin{split}
\varphi\left( \int_X f(s) d\mu(s) \right) = & \sup_{\omega \in A} \omega( \int_X f(s) d\mu(s) ) =
\sup_{\omega \in A} \int_X \omega(  f(s)  ) d\mu(s) \\
= & \int_X \sup_{\omega \in A}  \omega(  f(s)  ) d\mu(s) = \int_X \varphi(f(s)) d\mu(s).
\end{split}
\]
\end{proof}

The next result is a consequence of \cite[Lemma 3.18 (i)]{TakII}; we give a simple proof for the sake of completeness.

\begin{lem}
Let $\varphi$ be a normal, semi-finite, faithful weight on a von Neumann algebra $\cM$. Let $e \in \cM$ be in the centralizer of $\varphi$. Suppose that $\Vert e \Vert \leq 1$.  Then,
\begin{equation}\label{Eqn=CornerWeightInequality}
\varphi(e^\ast x e) \leq \varphi(x), \qquad \forall \: x \in \cM^+.
\end{equation}
\end{lem}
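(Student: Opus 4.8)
The plan is to reduce everything to a statement about right multiplication on $L^2(\cM,\varphi)$. First I would dispose of the trivial case: if $\varphi(x)=\infty$ there is nothing to prove, so I may assume $x\in\mphi^+$, equivalently $x^{1/2}\in\nphi$. The point is then to show that $x^{1/2}e\in\nphi$ and that
\[
\varphi(e^\ast x e)=\Vert\Lambda_\varphi(x^{1/2}e)\Vert_2^2\leq \Vert e\Vert^2\,\varphi(x)\leq\varphi(x),
\]
so that the whole statement follows once I know that right multiplication by $e$ is a bounded operator of norm at most $\Vert e\Vert$ on $L^2(\cM,\varphi)$ sending $\Lambda_\varphi(y)$ to $\Lambda_\varphi(ye)$.

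To prove this boundedness I would use modular theory. Since $e$ lies in the centralizer, $\sigma^\varphi_t(e)=e$ for all $t$, hence on $L^2(\cM,\varphi)$ the operator $\pi_\varphi(e)$ of left multiplication by $e$ commutes with $\Delta^{it}$ for all $t$, and therefore with $\Delta^{1/2}$ on the appropriate domain; in particular $e$ is entire analytic with $\sigma^\varphi_z(e)=e$ for all $z\in\bc$. A direct computation with the identity $J\Lambda_\varphi(y)=\Lambda_\varphi(\sigma^\varphi_{i/2}(y)^\ast)$, valid for $y$ in the Tomita algebra, then gives
\[
J\,\pi_\varphi(e)^\ast\,J\,\Lambda_\varphi(y)=\Lambda_\varphi(ye),
\]
where one uses $\sigma^\varphi_{i/2}(e)=e$ to cancel the modular corrections. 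Thus the densely defined right multiplication $R_e\colon\Lambda_\varphi(y)\mapsto\Lambda_\varphi(ye)$ agrees with the bounded operator $J\pi_\varphi(e)^\ast J$, whose norm equals $\Vert e\Vert\leq 1$ since $J$ is anti-unitary.

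It remains to upgrade the identity $R_e\Lambda_\varphi(y)=\Lambda_\varphi(ye)$ from $y$ in the Tomita algebra to arbitrary $y\in\nphi$; this is the only genuinely technical step, and it is crucial because $\nphi$ is merely a left ideal, so right multiplication by a general element need not preserve it — here the centralizer hypothesis on $e$ is exactly what is used. I would approximate a given $y\in\nphi$ by analytic elements $y_n$ (for instance via the usual Gaussian smoothing $y_n=\sqrt{n/\pi}\int_{\br}\sigma^\varphi_t(y)e^{-nt^2}\,dt$), so that $y_n\to y$ $\sigma$-strongly and $\Lambda_\varphi(y_n)\to\Lambda_\varphi(y)$. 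Then $y_ne\to ye$ $\sigma$-strongly while $\Lambda_\varphi(y_ne)=R_e\Lambda_\varphi(y_n)\to R_e\Lambda_\varphi(y)$, and the closedness of $\Lambda_\varphi$ for the $\sigma$-weak/weak topology yields $ye\in\nphi$ with $\Lambda_\varphi(ye)=R_e\Lambda_\varphi(y)$. Applying this with $y=x^{1/2}$ gives $x^{1/2}e\in\nphi$ and the displayed chain of inequalities, completing the proof. The main obstacle is precisely this last closedness/approximation argument, together with getting the modular bookkeeping in the computation of $J\pi_\varphi(e)^\ast J$ right.
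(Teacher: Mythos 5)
You follow essentially the same route as the paper: reduce to $x=y^\ast y$ with $y=x^{1/2}\in\nphi$, and deduce the inequality from the fact that right multiplication by the centralizer element $e$ is implemented on $L^2(\cM,\varphi)$ by $Je^\ast J$, a contraction because $\Vert e\Vert\leq 1$ and $J$ is anti-unitary (the paper writes $JeJ$; the placement of the adjoint is a harmless convention issue, since either way the implementing operator has norm $\Vert e\Vert$). The difference is that the paper simply asserts the identity $\Lambda_\varphi(ye)=JeJ\Lambda_\varphi(y)$, having noted beforehand that the whole lemma is a consequence of \cite[Lemma 3.18 (i)]{TakII}, whereas you attempt a self-contained proof of it, and there your argument has a concrete hole.

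The gap is in your upgrade step. You establish $R_e=J\pi_\varphi(e)^\ast J$ only on $\Lambda_\varphi$ of the Tomita algebra, but you then invoke $\Lambda_\varphi(y_ne)=R_e\Lambda_\varphi(y_n)$ for the Gaussian regularizations $y_n=\sqrt{n/\pi}\int_{\br}\sigma^\varphi_t(y)e^{-nt^2}\,dt$ of an arbitrary $y\in\nphi$. These $y_n$ are entire analytic and lie in $\nphi$, with $\Lambda_\varphi(y_n)\to\Lambda_\varphi(y)$, but they need not lie in the Tomita algebra: that would require in particular $y_n^\ast\in\nphi$, which fails in general when $y^\ast\notin\nphi$ (recall $\nphi$ is merely a left ideal and need not be $\ast$-closed for an infinite non-tracial weight), and smoothing does not repair this. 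So the identity you feed into the closedness argument is unproven precisely on the approximants you use. The hole is local and fixable, in fact without any approximation of $y$: first, $ye\in\nphi$ holds automatically for every $y\in\nphi$, since $y^\ast y\in\mphi$ and hence $(ye)^\ast(ye)=e^\ast y^\ast y e\in\mphi$ by \cite[Lemma VIII.2.4]{TakII}; next, your Tomita-algebra computation applied to $b$ in the Tomita algebra (with $e^\ast$ in place of $e$) gives $JeJ\Lambda_\varphi(b)=\Lambda_\varphi(be^\ast)$, whence
\[
\langle Je^\ast J\Lambda_\varphi(y),\Lambda_\varphi(b)\rangle=\langle\Lambda_\varphi(y),\Lambda_\varphi(be^\ast)\rangle=\varphi(eb^\ast y)=\varphi(b^\ast ye)=\langle\Lambda_\varphi(ye),\Lambda_\varphi(b)\rangle,
\]
where the middle equality is the trace-like property $\varphi(ex)=\varphi(xe)$, $x\in\mphi$, of the centralizer. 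Density of the image of the Tomita algebra in $L^2(\cM,\varphi)$ then yields $\Lambda_\varphi(ye)=Je^\ast J\Lambda_\varphi(y)$ for all $y\in\nphi$, after which your chain of inequalities goes through verbatim.
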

\begin{proof}
We may assume that $x \in \mphi^+$, since otherwise $\varphi(x) = \infty$ and this inequality is trivial. We may also assume that $x = y^\ast y$ for some $y \in \nphi$, since $\mphi^+$ is spanned by such elements. Then, using $\sigma_t^\varphi(e) = e$ in the second equality, and $J e^\ast e J \leq 1$ in the fourth (in)equality,
\[
\begin{split}
\varphi(e^\ast y^\ast y e) = & \langle  \Lambda_{\varphi}(y e),  \Lambda_{\varphi}(y e) \rangle = \langle J e J \Lambda_{\varphi}(y), J e J \Lambda_{\varphi}(y) \rangle \\
 = & \langle J e^\ast e J \Lambda_{\varphi}(y), \Lambda_{\varphi}(y) \rangle \leq \langle \Lambda_\varphi(y), \Lambda_{\varphi}(y) \rangle = \varphi(y^\ast y),
\end{split}
\]
which proves \eqref{Eqn=CornerWeightInequality}.
\end{proof}

Finally we shall briefly use the following standard lemma, which can be derived from Stone's theorem. We omit the proof.
\begin{lem}\label{Lem=StronglyCTGroup}
Let $\cM$ be a von Neumann algebra with normal, semi-finite, faithful weight $\varphi$. Let $\alpha: \mathbb{R} \rightarrow \Aut(\cM)$ be a strongly continuous $\varphi$-preserving 1-parameter group of automorphisms. Then, there exists a unique (unbounded) positive, self-adjoint operator $P$ such that $P^{it} \Lambda_{\varphi}(x) = \Lambda_{\varphi}(\alpha_t(x)), t \in \mathbb{R}$. The mapping $t \mapsto P^{it}$ is strongly continuous.
\end{lem}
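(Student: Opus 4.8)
The plan is to implement the automorphisms $\alpha_t$ by unitaries on $L^2(\cM,\varphi)$ and then invoke Stone's theorem. First I would define, for each $t \in \mathbb{R}$, a map $U_t$ on the dense subspace $\Lambda_\varphi(\nphi) \subseteq L^2(\cM,\varphi)$ by $U_t \Lambda_\varphi(x) = \Lambda_\varphi(\alpha_t(x))$. Since $\varphi \circ \alpha_t = \varphi$, for $x \in \nphi$ we have $\varphi(\alpha_t(x)^\ast \alpha_t(x)) = \varphi(\alpha_t(x^\ast x)) = \varphi(x^\ast x) < \infty$, so $\alpha_t$ restricts to a bijection of $\nphi$ (with inverse $\alpha_{-t}$) and $\Vert \Lambda_\varphi(\alpha_t(x)) \Vert_2 = \Vert \Lambda_\varphi(x) \Vert_2$. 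Hence $U_t$ is a well-defined isometry on a dense subspace, extends to an isometry of $L^2(\cM,\varphi)$, and is unitary because $U_t U_{-t} = U_{-t} U_t = \id$. The identities $U_s U_t \Lambda_\varphi(x) = \Lambda_\varphi(\alpha_s \alpha_t(x)) = U_{s+t}\Lambda_\varphi(x)$ and $U_0 = \id$ show that $(U_t)_{t\in\mathbb{R}}$ is a one-parameter group of unitaries.

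Next I would establish strong continuity of $t \mapsto U_t$. As all $U_t$ are unitary, a standard $3\varepsilon$-argument reduces matters to checking that $U_t \Lambda_\varphi(x) \to \Lambda_\varphi(x)$ in $\Vert \cdot \Vert_2$ as $t \to 0$ for $x \in \nphi$. Fix such an $x$ and a sequence $t_n \to 0$. The vectors $\Lambda_\varphi(\alpha_{t_n}(x))$ all have norm $\Vert \Lambda_\varphi(x) \Vert_2$, hence lie in a fixed ball and, $L^2(\cM,\varphi)$ being separable, admit a weakly convergent subsequence. Since $\alpha$ is strongly continuous, $\alpha_{t_n}(x) \to x$ $\sigma$-weakly while staying norm-bounded by $\Vert x \Vert$; because $\Lambda_\varphi$ is closed for the $\sigma$-weak/weak topologies, any weak limit point of $\Lambda_\varphi(\alpha_{t_n}(x))$ must equal $\Lambda_\varphi(x)$. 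As every subsequence thus has a further subsequence converging weakly to the same limit $\Lambda_\varphi(x)$, the whole sequence converges weakly to $\Lambda_\varphi(x)$. Combined with the exact norm equality $\Vert \Lambda_\varphi(\alpha_{t_n}(x)) \Vert_2 = \Vert \Lambda_\varphi(x) \Vert_2$, weak convergence upgrades to norm convergence, giving the desired continuity at $0$ and, via the group law, everywhere.

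Finally, Stone's theorem provides a unique self-adjoint (generally unbounded) operator $A$ with $U_t = e^{itA}$ for all $t$. Setting $P = \exp(A)$ through the Borel functional calculus yields a positive, self-adjoint operator with $P^{it} = e^{itA} = U_t$, so that $P^{it}\Lambda_\varphi(x) = \Lambda_\varphi(\alpha_t(x))$, and strong continuity of $t \mapsto P^{it}$ is exactly the continuity just established. Uniqueness of $P$ follows from that of the generator $A$: any other positive self-adjoint $P'$ with $(P')^{it} = U_t$ has $\log P'$ self-adjoint and generating the same group, whence $\log P' = A$ and $P' = P$. I expect the only genuinely nontrivial point to be the strong-continuity step, where one must pass from the merely $\sigma$-weak pointwise continuity of the action on $\cM$ to $L^2$-norm continuity of the $U_t$; the closedness of $\Lambda_\varphi$ together with the norm-preservation trick is what bridges this gap.
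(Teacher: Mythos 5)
Your proof is correct and follows exactly the route the paper intends: the paper omits the proof of this lemma, remarking only that it ``can be derived from Stone's theorem,'' and your argument supplies precisely that derivation --- the unitary group $U_t\Lambda_\varphi(x)=\Lambda_\varphi(\alpha_t(x))$ built from $\varphi$-invariance, strong continuity obtained from the $\sigma$-weak/weak closedness of $\Lambda_\varphi$ (recorded in the paper's preliminaries) together with the upgrade from weak convergence plus norm equality to norm convergence, and then Stone's theorem with $P=e^{A}$. I see no gaps; in particular you correctly identified the strong-continuity step as the only nontrivial point and handled it properly.
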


\section{Definition of the Haagerup property}\label{Sect=HP}

We consider the most general definition of the Haagerup property with respect to a weight. In concrete examples, such as von Neumann algebras of discrete groups and discrete quantum groups, one is usually interested in the case that the weight is finite, i.e.\ a state. However, even if in the end one would want to consider only this restricted setup our proofs require the passage to the general, infinite context.

Recall Lemma \ref{Lem=KadisonSchwarz} and the terminology introduced after it.

\begin{dfn}\label{Dfn=HP}
A pair $(\cM, \varphi)$ of a von Neumann algebra $\cM$ with normal, semi-finite, faithful weight $\varphi$ has the Haagerup property if there exists a sequence $\{ \Phi_k: \cM \rightarrow \cM \}_{k \in \mathbb{N}}$ of normal, completely positive maps such that the following properties hold:
\begin{enumerate}
\item\label{Item=HPOne} for every $k \in \bn$ we have  $\varphi \circ \Phi_k \leq \varphi$;
\item\label{Item=HPTwo} for every $k \in \mathbb{N}$ the map $\Phi_k$ is $L^2(\cM, \varphi)$-compact;
\item\label{Item=HPThree} the induced maps $T_k\in \cB(L^2(\cM, \varphi))$ converge to $1$ in the strong operator topology of $\cB(L^2(\cM, \varphi))$.
\end{enumerate}
\end{dfn}
\begin{rmk}
Note that by the Principle of Uniform Boundedness, the {\it sequence} $\{ T_k \}_{k \in \mathbb{N}}$ of compact maps on $L^2(\cM, \varphi)$ is bounded. We use this remark implicitly in our proofs.
\end{rmk}

The above definition coincides with the definition introduced for a finite von Neumann algebra equipped with a faithful normal tracial state  in \cite{Cho} (see also the discussion in Section 4 of \cite{A-D}), and later studied in detail in \cite{Jol}. A recent definition of the Haagerup property proposed in \cite[Definition 6.3]{DawFimSkaWhi} (see also \cite{A-D-II} and \cite{ComFreYam}) assumes that the maps $\Phi_k$ are unital and state preserving. In fact \cite[ Proposition 2.2]{Jol} shows that if $\varphi$ is a faithful tracial state and $(\cM, \varphi)$ satisfies the Haagerup property, then the approximating maps $\Phi_k$ can all be chosen unital and trace preserving.


\begin{rmk}
Another possible route to defining a Haagerup-type property of a general von Neumann algebra, chosen for example in \cite{HoudRic}, is given via the \emph{compact approximation property} introduced in Definition 4.13 of \cite{A-D}: we say that a von Neumann algebra $\cM$ has the compact approximation property if there exists a sequence  $\{ \Phi_k: \cM \rightarrow \cM \}_{k \in \mathbb{N}}$ of normal, completely positive maps such that the following properties hold:
\begin{enumerate}
\item for every $k \in \mathbb{N}$, and $\xi \in L^2(\cM)$ the map $\cM \ni x \mapsto \Phi_k(x) \xi \in L^2(\cM)$ is compact (where $L^2(\cM)$ denotes the standard form Hilbert space of $\cM$);
\item for each $x \in \cM$ we have $\Phi_k (x) \stackrel{k\to \infty}{\longrightarrow} x$ in the $\sigma$-weak topology.
\end{enumerate}
It is not difficult to see that for a finite von Neumann algebra $\cM$ the Haagerup property implies the compact approximation property (in particular in the situation where $\varphi$ is a finite trace and $\Phi_i$ are unital the condition (2) above is equivalent to condition (3) in Definition \ref{Dfn=HP}); moreover they are equivalent for the group von Neumann algebras of discrete groups, as noted in Proposition 4.16 of \cite{A-D}.
\end{rmk}

Now we collect a couple of facts following immediately from the definition, which will be needed later on.

\begin{prop}\label{Prop=HPBofH}
Let $\cH$ be a separable Hilbert space. $(\cB(\cH), {\rm Tr})$ has the  Haagerup property.
\end{prop}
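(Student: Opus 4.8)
The plan is to realize the approximating maps as two-sided compressions by finite-rank projections. Fix an orthonormal basis $(e_n)_{n \in \bn}$ of $\cH$ and let $P_k \in \cB(\cH)$ denote the orthogonal projection onto $\spa\{e_1, \dots, e_k\}$. Define $\Phi_k \colon \cB(\cH) \to \cB(\cH)$ by $\Phi_k(x) = P_k x P_k$. Each $\Phi_k$ is normal (being $\sigma$-weakly continuous) and completely positive (being of the form $x \mapsto a^* x a$ with $a = P_k$), so it remains to verify conditions \eqref{Item=HPOne}--\eqref{Item=HPThree} of Definition \ref{Dfn=HP}. For condition \eqref{Item=HPOne} I would invoke the corner inequality \eqref{Eqn=CornerWeightInequality}: since ${\rm Tr}$ is tracial, every element lies in its centralizer, in particular $P_k$ does, and $\Vert P_k \Vert \le 1$; hence ${\rm Tr}(P_k x P_k) \le {\rm Tr}(x)$ for all $x \in \cB(\cH)^+$, that is, ${\rm Tr} \circ \Phi_k \le {\rm Tr}$.

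Next I would identify $L^2(\cB(\cH), {\rm Tr})$ with the Hilbert space of Hilbert--Schmidt operators on $\cH$, with $\mathfrak{n}_{{\rm Tr}}$ equal to the Hilbert--Schmidt ideal and $\Lambda_{{\rm Tr}}$ the canonical inclusion. Under this identification the operator $T_k \in \cB(L^2(\cB(\cH),{\rm Tr}))$ induced by Lemma \ref{Lem=KadisonSchwarz} is given by $T_k(\xi) = P_k \xi P_k$ for Hilbert--Schmidt $\xi$. Since $P_k$ has finite rank, the range of $T_k$ is finite-dimensional, so $T_k$ is compact; this gives condition \eqref{Item=HPTwo}.

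For condition \eqref{Item=HPThree} I would show $T_k \to 1$ strongly on $L^2(\cB(\cH),{\rm Tr})$. For a Hilbert--Schmidt operator $\xi$ one estimates $\Vert P_k \xi P_k - \xi\Vert_2 \le \Vert P_k\xi(P_k - 1)\Vert_2 + \Vert(P_k-1)\xi\Vert_2 \le \Vert\xi(P_k-1)\Vert_2 + \Vert(P_k-1)\xi\Vert_2$. Both summands tend to $0$, since $P_k \to 1$ in the strong operator topology of $\cB(\cH)$ forces both left and right multiplication by $P_k$ to converge strongly to the identity in Hilbert--Schmidt norm; this is a standard fact, verified first on finite-rank $\xi$ and then extended by the uniform bound $\Vert T_k \Vert \le 1$.

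The construction is elementary, and the only points requiring genuine care are the correct identification of the $L^2$-implementation $T_k$ as the two-sided compression on Hilbert--Schmidt operators, and the verification that such two-sided compression converges strongly in Hilbert--Schmidt norm (rather than merely in the operator norm). Beyond these routine verifications I expect no serious obstacle.
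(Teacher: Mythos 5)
Your proposal is correct and uses exactly the construction the paper gives: compression $\Phi_k(x) = P_k x P_k$ by finite-rank projections along an orthonormal basis, with the paper leaving as ``straightforward'' the verifications you carry out (trace subinvariance via traciality of ${\rm Tr}$, finite rank of $T_k$ on the Hilbert--Schmidt space, and strong convergence $T_k \to 1$ by the standard $3\epsilon$-argument). Your filled-in details, including the identification of $L^2(\cB(\cH),{\rm Tr})$ with the Hilbert--Schmidt operators and the estimate $\Vert P_k \xi P_k - \xi \Vert_2 \leq \Vert \xi(P_k-1)\Vert_2 + \Vert (P_k-1)\xi\Vert_2$, are all accurate.
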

\begin{proof}
Let $\{ e_i \}_{i \in \mathbb{N}}$ be an orthonormal basis of $\cH$. Let $P_k$ be the projection onto the linear span of $\{ e_i \mid 0 \leq i \leq k \}$. Then we set,
\[
\Phi_k(x) =   P_{k} x P_{k}.
\]
It is straightforward to check that $\{ \Phi_k \}_{k \in \mathbb{N}}$ is a sequence of normal, completely positive maps that witnesses the  Haagerup property of $(\cB(\cH), {\rm Tr})$.
\end{proof}

Recall the tensor product construction for weights, as described for example in \cite[Section VIII.4]{TakII}.

\begin{lem}\label{Lem=HPTensor}
Let $(\cM, \varphi), (\cN, \psi)$ be pairs of a von Neumann algebra and a normal, semi-finite, faithful weight.  If $(\cM, \varphi)$ and $(\cN, \psi)$ have the Haagerup property, then so does $(\cM \wot \cN, \varphi \otimes \psi)$.
\end{lem}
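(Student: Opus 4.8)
The plan is to build the witnessing maps for $\cM \wot \cN$ as tensor products of the witnessing maps for the two factors. Let $\{ \Phi_k \}_{k \in \bn}$ and $\{ \Psi_k \}_{k \in \bn}$ be sequences of normal, completely positive maps implementing the Haagerup property of $(\cM, \varphi)$ and $(\cN, \psi)$ respectively, and let $T_k \in \cB(L^2(\cM, \varphi))$ and $S_k \in \cB(L^2(\cN, \psi))$ be the associated $L^2$-implementations coming from Lemma \ref{Lem=KadisonSchwarz}. I would set $\Xi_k := \Phi_k \otimes \Psi_k : \cM \wot \cN \rightarrow \cM \wot \cN$, the $\sigma$-weak tensor product, which is again normal and completely positive, and claim that $\{ \Xi_k \}_{k \in \bn}$ witnesses the Haagerup property of $(\cM \wot \cN, \varphi \otimes \psi)$.

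For condition \eqref{Item=HPOne} of Definition \ref{Dfn=HP}, I would use the functoriality of the tensor product of weights to write $(\varphi \otimes \psi) \circ (\Phi_k \otimes \Psi_k) = (\varphi \circ \Phi_k) \otimes (\psi \circ \Psi_k)$. Since $\varphi \circ \Phi_k \leq \varphi$ and $\psi \circ \Psi_k \leq \psi$ as weights, and the tensor product of weights preserves such inequalities on positive elements (via the construction in \cite[Section VIII.4]{TakII}), this yields $(\varphi \otimes \psi) \circ \Xi_k \leq \varphi \otimes \psi$.

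Next I would identify the $L^2$-implementation of $\Xi_k$. Under the canonical unitary identification $L^2(\cM \wot \cN, \varphi \otimes \psi) \cong L^2(\cM, \varphi) \otimes L^2(\cN, \psi)$, which sends $\Lambda_{\varphi \otimes \psi}(x \otimes y)$ to $\Lambda_\varphi(x) \otimes \Lambda_\psi(y)$ for $x \in \nphi$, $y \in \npsi$, the $L^2$-implementation $U_k$ of $\Xi_k$ satisfies $U_k(\Lambda_\varphi(x) \otimes \Lambda_\psi(y)) = \Lambda_\varphi(\Phi_k(x)) \otimes \Lambda_\psi(\Psi_k(y)) = (T_k \otimes S_k)(\Lambda_\varphi(x) \otimes \Lambda_\psi(y))$ on the dense subspace spanned by such elementary tensors. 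Hence $U_k = T_k \otimes S_k$, and condition \eqref{Item=HPTwo} follows at once, since the tensor product of two compact operators is compact.

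Finally, for condition \eqref{Item=HPThree}, I would invoke the standard fact that strong convergence of uniformly bounded families is stable under tensoring. Writing $T_k \otimes S_k - 1 = (T_k - 1) \otimes 1 + T_k \otimes (S_k - 1)$, one estimates $\Vert (T_k \otimes S_k - 1)\zeta \Vert \leq \Vert ((T_k - 1)\otimes 1)\zeta \Vert + \Vert (T_k \otimes (S_k - 1))\zeta \Vert$; each summand tends to $0$ on elementary tensors and then, using the uniform bound on $\{ T_k \}$ and $\{ S_k \}$ guaranteed by the Remark following Definition \ref{Dfn=HP}, extends to all of the Hilbert space by density. Thus $U_k = T_k \otimes S_k \to 1$ strongly. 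The only mildly technical points are the careful handling of the tensor product of weights in the first step and the explicit identification of the GNS Hilbert space in the third; everything else is a routine consequence of the behaviour of compact operators and strong limits under tensoring.
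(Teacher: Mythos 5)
Your proposal is correct and follows exactly the route of the paper, which simply states that $\{\Phi_k \otimes \Psi_k\}_{k\in\bn}$ witnesses the Haagerup property of $(\cM \wot \cN, \varphi \otimes \psi)$ and leaves the details to the reader; you have supplied precisely those details (the weight inequality via the tensor product construction of \cite[Section VIII.4]{TakII}, the identification $U_k = T_k \otimes S_k$ under $L^2(\cM \wot \cN, \varphi\otimes\psi) \cong L^2(\cM,\varphi)\otimes L^2(\cN,\psi)$, compactness of tensor products of compacts, and the uniformly bounded $3\epsilon$-argument for strong convergence). No gaps.
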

\begin{proof}
If $\{ \Phi_k \}_{k \in \mathbb{N}}$ (resp. $\{ \Psi_k \}_{k \in \mathbb{N}}$) is a sequence of completely positive maps that witnesses the Haagerup property of $(\cM, \varphi)$ (resp. $(\cN, \psi)$), then $\{ \Phi_k \otimes \Psi_k \}_{k \in \mathbb{N}}$ witnesses the Haagerup property for $(\cM \wot \cN, \varphi \otimes \psi)$. We leave the details to the reader.
\end{proof}

\begin{rmk}
The converse of Lemma \ref{Lem=HPTensor} is also true. We found it more suitable to postpone the proof to Lemma \ref{Lem=DescentDownToTensor}.
\end{rmk}

\section{The Haagerup property for semi-finite von Neumann algebras}\label{Sect=SemiFinite}

This section is devoted to proving that for a semi-finite von Neumann algebra the Haagerup property is independent of the choice of weight. Our main tool is the Radon-Nikodym derivative \cite{PedTak}. Our proofs should be compared to those of Jolissaint from \cite{Jol}, treating the case of a finite von Neumann algebra. Here, we overcome some additional technicalities due to the fact that we work with infinite weights instead of finite tracial states.

\begin{lem}\label{Lem=HPCorner}
Let $\cM$ be a von Neumann algebra equipped with a normal, semi-finite, faithful weight $\varphi$. Let $\{e_n\}_{n \in \mathbb{N}}$ be a sequence of projections contained in the centralizer of $\varphi$ such that $e_n \nearrow 1$. Let $\varphi^{(n)}$ be the restriction of $\varphi$ to $e_n  \cM e_n$.  If $(\cM, \varphi)$ has the Haagerup property then for every $n \in \mathbb{N}$, $(e_n \cM e_n, \varphi^{(n)})$ has the Haagerup property. Conversely, if  for every $n \in \mathbb{N}$, $(e_n \cM e_n, \varphi^{(n)})$ has the Haagerup property and the approximating maps (i.e. $\Phi_k$ in Definition \ref{Dfn=HP}) are contractive for each $n$, then $(\cM, \varphi)$ has the Haagerup property.
\end{lem}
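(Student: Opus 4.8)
The plan is to prove the two directions of Lemma~\ref{Lem=HPCorner} separately, using the projections $e_n$ both to restrict the completely positive maps from $\cM$ to the corners $e_n \cM e_n$ and, in the converse direction, to amalgamate the corner-level maps into global maps on $\cM$. The key structural fact I would rely on throughout is that, since $e_n$ lies in the centralizer $\cM^\varphi$, one has the inequality $\varphi(e_n^\ast x e_n) \le \varphi(x)$ for all $x \in \cM^+$ from the unnamed corner-weight lemma preceding \eqref{Eqn=CornerWeightInequality}, together with the observation that $L^2(e_n \cM e_n, \varphi^{(n)})$ embeds isometrically as the closed subspace $\Lambda_\varphi(e_n \cM e_n)^{-}$ of $L^2(\cM, \varphi)$, with the embedding being exactly the operator of left-and-right multiplication by $e_n$ at the $L^2$-level, namely $e_n J e_n J$.

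\medskip

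\noindent\textbf{Forward direction.} Suppose $(\cM,\varphi)$ has the Haagerup property with approximating maps $\{\Phi_k\}_{k\in\bn}$ and $L^2$-implementations $T_k$. Fix $n$ and define $\Phi_k^{(n)}: e_n \cM e_n \to e_n \cM e_n$ by $\Phi_k^{(n)}(x) = e_n \Phi_k(x) e_n$. First I would check these are normal and completely positive (as compressions of such maps), and that $\varphi^{(n)} \circ \Phi_k^{(n)} \le \varphi^{(n)}$: for $x \in (e_n \cM e_n)^+$ we have $\varphi^{(n)}(\Phi_k^{(n)}(x)) = \varphi(e_n \Phi_k(x) e_n) \le \varphi(\Phi_k(x)) \le \varphi(x) = \varphi^{(n)}(x)$, using the corner-weight inequality and then property \eqref{Item=HPOne}. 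The crux is identifying the $L^2$-implementation $T_k^{(n)}$ of $\Phi_k^{(n)}$ on $L^2(e_n\cM e_n, \varphi^{(n)})$. Under the above identification of this space as a subspace of $L^2(\cM,\varphi)$, I expect $T_k^{(n)}$ to be the compression $Q_n T_k Q_n$ (or $Q_n T_k$ on the subspace), where $Q_n = e_n J e_n J$ is the orthogonal projection onto $\Lambda_\varphi(e_n \cM e_n)^{-}$. Since $T_k$ is compact, its compression $Q_n T_k Q_n$ is compact, giving \eqref{Item=HPTwo}. For the strong convergence \eqref{Item=HPThree}: given $\xi \in \Lambda_\varphi(e_n \cM e_n)^{-}$ we have $Q_n\xi = \xi$, so $T_k^{(n)}\xi = Q_n T_k \xi \to Q_n \xi = \xi$ because $T_k \to 1$ strongly and $Q_n$ is bounded.

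\medskip

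\noindent\textbf{Converse direction.} Now assume each $(e_n \cM e_n, \varphi^{(n)})$ has the Haagerup property with contractive approximating maps, and build global maps on $\cM$. The natural construction is to set $\Phi_{n,k}(x) = \Phi_k^{(n)}(e_n x e_n)$, viewed as a map $\cM \to e_n \cM e_n \subseteq \cM$, where $\Phi_k^{(n)}$ witnesses the Haagerup property of the corner. These are normal and completely positive, and contractivity ensures $\Phi_{n,k}(1) \le 1$ so that the $L^2$-implementations are uniformly bounded. The submultiplicativity/subweight condition $\varphi \circ \Phi_{n,k} \le \varphi$ follows by combining $\varphi^{(n)} \circ \Phi_k^{(n)} \le \varphi^{(n)}$ with the corner-weight inequality applied to $e_n x e_n$. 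The $L^2$-implementation of $\Phi_{n,k}$ should be $T_k^{(n)} Q_n$ (first compress to the corner subspace via $Q_n$, then apply the corner map), which is compact since $T_k^{(n)}$ is. The final task is to select, by a diagonal argument, a single sequence $\{\Psi_m\}_{m\in\bn}$ from the doubly-indexed family whose $L^2$-implementations converge strongly to $1$ on all of $L^2(\cM,\varphi)$. Because $\Lambda_\varphi(\bigcup_n e_n \cM e_n)$ is dense in $L^2(\cM,\varphi)$ (as $e_n \nearrow 1$ forces $Q_n \to 1$ strongly), one can first push $n \to \infty$ to approximate any fixed vector by one in a corner, and then for that fixed corner use $k \to \infty$ convergence; a standard $\varepsilon/3$ diagonalization over a countable dense set, using the uniform boundedness of all the implementations, yields the desired sequence.

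\medskip

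\noindent\textbf{The main obstacle} I anticipate is the bookkeeping in the converse direction: verifying that the doubly-indexed implementations $T_k^{(n)} Q_n$ are genuinely the $L^2$-implementations of $\Phi_{n,k}$ in the sense of Lemma~\ref{Lem=KadisonSchwarz} (matching the abstract $\Lambda_\varphi(x)\mapsto \Lambda_\varphi(\Phi_{n,k}(x))$ recipe with the concrete operator $T_k^{(n)}Q_n$), and then carefully executing the diagonal extraction so that strong convergence to $1$ holds on a dense subspace and extends to all of $L^2(\cM,\varphi)$ by the uniform bound. The interplay between the two approximation parameters—needing $Q_n$ close to $1$ and simultaneously $T_k^{(n)}$ close to the identity on the $n$-th corner—is exactly where the contractivity hypothesis is used, since without a uniform bound on $\|T_k^{(n)}Q_n\|$ the $\varepsilon/3$ argument would break down.
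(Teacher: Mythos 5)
Your proposal is correct and follows essentially the same route as the paper's own proof: in the forward direction the paper also compresses the approximating maps by $e_n$, identifies the $L^2$-implementation as $e_n J e_n J T_k$ restricted to the corner subspace, and deduces compactness and strong convergence from boundedness of the $T_k$; in the converse it likewise sets $\Psi(x) = \Phi^{(n)}_{k(n)}(e_n x e_n)$ with implementation $T^{(n)}_{k(n)} e_n J e_n J$ and runs the same $3\epsilon$-diagonalization over finite subsets of $\nphi$, using contractivity exactly as you do for the uniform bound. The only cosmetic difference is that the paper makes explicit the routine domain checks you leave implicit (semi-finiteness of $\varphi^{(n)}$ and $e_n x e_n, \Psi_k(x) \in \nphi$, via \cite[Lemma VIII.2.4]{TakII}).
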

\begin{proof}
It deserves to be emphasized that for $x \in \mphi$ and $a$ in the centralizer of $\varphi$, the elements $xa$ and $ax$ are in $\mphi$, see \cite[Lemma VIII.2.4]{TakII}. So indeed, the restriction of $\varphi$ to the corner $e_n \cM e_n$ is semi-finite. The GNS map of $\varphi^{(n)}$ denoted by $\Lambda_{\varphi^{(n)}}$ can be taken as the restriction of $\Lambda_{\varphi}$ to $\nphi \cap e_n \cM e_n$.

First, assume that $(\cM, \varphi)$ has the Haagerup property.
Let $\{ \Phi_k \}_{k \in \mathbb{N}}$ be the sequence of normal, completely positive maps satisfying conditions \ref{Item=HPOne} - \ref{Item=HPThree} of Definition \ref{Dfn=HP}, and hence witnessing that $(\cM, \varphi)$ has the Haagerup property. Let $T_k$ be the corresponding $L^2$-map of $\Phi_k$, see Lemma \ref{Lem=KadisonSchwarz}.   Fix an index $n \in \mathbb{N}$. Set $\Psi_k(\: \cdot \:) = e_n \Phi_k( \: \cdot \:) e_n$. We claim that the sequence $\{ \Psi_k\}_{k \in \mathbb{N}}$ witnesses the Haagerup property for $(e_n \cM e_n, \varphi^{(n)})$. Using \eqref{Eqn=CornerWeightInequality} it follows directly that Property \ref{Item=HPOne} of Definition \ref{Dfn=HP} is satisfied. For $x \in \mathfrak{n}_{\varphi^{(n)}}$ it follows again from \cite[Lemma VIII.2.4]{TakII} that,
\[
\Psi_k(x)^\ast \Psi_k(x) = e_n \Phi_k(x)^\ast e_n \Phi_k(x) e_n \leq e_n \Phi_k(x)^\ast \Phi_k(x) e_n \in \mathfrak{m}_{\varphi^{(n)}},
\]
which means that $\Psi_k(x) \in \mathfrak{n}_{\varphi^{(n)}}$.  Furthermore, using  that $\sigma_t^{\varphi}(e_n) = e_n$, we have that,
\[
\Lambda_{\varphi^{(n)}}( x ) \mapsto \Lambda_{\varphi^{(n)}}( \Psi_k(x)) = \Lambda_{\varphi^{(n)}}( e_n \Phi_k(x) e_n ) = e_n J e_n J T_k \Lambda_{\varphi}( x ), \qquad x \in \mathfrak{n}_{\varphi^{(n)}},
\]
extends to a compact map $T_k^{(n)}$. Moreover, for $x \in \mathfrak{n}_{\varphi^{(n)}}$, we have,
\[
\begin{split}
T_k^{(n)}\Lambda_{\varphi^{(n)}}( x) = & \: \Lambda_{\varphi^{(n)}}( \Psi_k(x)) =
e_n J e_n J T_k \Lambda_{\varphi}( x )  \\
\rightarrow &  \: e_n J e_n J \Lambda_{\varphi}( x ) =  \Lambda_{\varphi^{(n)}}( e_n x e_n ) =  \Lambda_{\varphi^{(n)}}( x ),
\end{split}
\]
where the convergence is in norm of $L^2(\cM, \varphi)$. Since the norms $\Vert T_k \Vert$ are uniformly bounded, it follows from a $3\epsilon$-estimate that for every $n\in \mathbb{N}$ we have $T_k^{(n)} \stackrel{k\to \infty}{\longrightarrow}1$ strongly.

\vspace{0.3cm}

Next, we prove the converse.   Let $\{ \Phi_k^{(n)} \}_{k \in \mathbb{N}}$ be the sequence of completely positive maps witnessing the Haagerup property for $(e_n \cM e_n, \varphi^{(n)})$. By assumption we may assume that $\Phi_k^{(n)}$ is contractive. Also, let $T_k^{(n)}$ be their corresponding $L^2$-implementations, see Lemma \ref{Lem=KadisonSchwarz}. By the Kadison-Schwarz inequality also $T_k^{(n)}$ is contractive.  Let $F$ be a finite subset of $\nphi$. Choose $n \in \mathbb{N}$ such that for every $x \in F$ we have
\[
\Vert \Lambda_{\varphi}(e_n xe_n) - \Lambda_{\varphi}(x) \Vert_2 = \Vert e_n J e_n J \Lambda_{\varphi}(x) - \Lambda_{\varphi}(x) \Vert_2 \leq \frac{1}{n}.
\]
Then, choose $k := k(n)$ such that for every $x \in F$ we have
\[
\Vert T_k^{(n)}  \Lambda_{\varphi}(e_n x e_n) - \Lambda_{\varphi}(e_n xe_n) \Vert_2 \leq \frac{1}{n}.
\]
We claim that $\Psi_n(x) := \Phi_{k(n)}^{(n)}(e_n x e_n), x \in \cM$ witnesses the Haagerup property for $(\cM, \varphi)$. Indeed, using \eqref{Eqn=CornerWeightInequality} one verifies property \ref{Item=HPOne} of Definition \ref{Dfn=HP}. Let $x \in \nphi$. Then, $e_n x^\ast e_n x e_n \leq e_n x^\ast x e_n \in \mphi$ by \cite[Lemma VIII.2.4]{TakII}, so that $e_n x e_n \in \nphi$. Then, also $\Psi_n(x) \in \nphi$. Moreover,
\[
\Lambda_\varphi( \Psi_n(x) ) = T_{k(n)}^{(n)} e_n J e_n J \Lambda_{\varphi}(x),
\]
so that $\Psi_n$ determines a compact operator $\Lambda_\varphi(x) \mapsto \Lambda_\varphi(\Psi_n(x))$. Moreover, for every $x \in F$, we have,
\[
\begin{split}
\Vert \Lambda_{\varphi}(\Psi_n(x )  ) - \Lambda_{\varphi}(x) \Vert_{2}
= & \Vert T_{k(n)}^{(n)}   \Lambda_{\varphi}(  e_n xe_n   ) - \Lambda_{\varphi}(x) \Vert_{2} \\
\leq & \Vert T_{k(n)}^{(n)}  \Lambda_{\varphi}( e_n x e_n   ) - \Lambda_{\varphi}(e_n x e_n) \Vert_2 + \Vert \Lambda_{\varphi}(e_n xe_n) -  \Lambda_{\varphi}(x) \Vert_{2}
\leq \frac{2}{n}.
\end{split}
\]
Now, using the separability of $L^2(\cM, \varphi)$ and letting $n$ depend on the finite set $F$, this implies that we may in fact find a suitable sequence $\Psi_n$ such that properties \ref{Item=HPTwo} and \ref{Item=HPThree} are satisfied. Note that the $L^2$-implementations of $\Psi_n$ converge to 1 strongly by the fact that they are contractive, the previous equations and a standard $3\epsilon$-estimate.
\end{proof}


In the formulation of the next proposition we use the Radon-Nikodym theorem for weights on von Neumann algebras, \cite[Theorem 5.12]{PedTak} (see also \cite[Corollary VIII.3.6]{TakII}).

\begin{prop}\label{Prop=CocycleInvariance}
Let $\cM$ be a semi-finite von Neumann algebra with normal, semi-finite, faithful trace $\tau$ and let $\varphi$ be a normal, semi-finite, faithful weight on $\cM$.  $(\cM, \tau)$ has the Haagerup property if and only if so does $(\cM, \varphi)$.
\end{prop}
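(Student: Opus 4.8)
The plan is to realize $\varphi$ via a Radon--Nikodym derivative against the trace and to transport the approximating maps through the corresponding conjugation. By the Radon--Nikodym theorem for weights \cite[Theorem 5.12]{PedTak} (see also \cite[Corollary VIII.3.6]{TakII}) there is a positive, self-adjoint operator $h$ affiliated with $\cM$ with $\varphi(x) = \tau(h^{1/2} x h^{1/2})$ for $x \in \cM^+$ and $\sigma_t^\varphi = \mathrm{Ad}(h^{it})$ (the latter because $\tau$ is a trace, so $\sigma^\tau$ is trivial). Faithfulness and semi-finiteness of $\varphi$ force $h$ to be non-singular. The obvious candidate for transporting a map $\Phi$ is $x \mapsto h^{-1/2}\Phi(h^{1/2} x h^{1/2})h^{-1/2}$, but this only makes sense when $h^{\pm 1/2}$ are bounded, so I would first cut down by the spectral projections $e_n = \chi_{[1/n,n]}(h)$. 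These lie in the centralizer of $\varphi$ (they commute with $h$, hence with $h^{it}$) and trivially in that of $\tau$, and $e_n \nearrow 1$ since $h$ is non-singular. Thus Lemma \ref{Lem=HPCorner}, applied to both $\varphi$ and $\tau$, reduces the statement to proving, for each fixed $n$, that $(e_n \cM e_n, \tau^{(n)})$ has the Haagerup property if and only if $(e_n \cM e_n, \varphi^{(n)})$ does.

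On the corner $h_n := h e_n$ is bounded with bounded inverse, and by the trace property $\varphi^{(n)}(x) = \tau^{(n)}(h_n x)$ for positive $x$; in particular the two weights are equivalent and the GNS-domains in $e_n \cM e_n$ coincide. The map $W\colon \Lambda_{\varphi}(x) \mapsto \Lambda_{\tau}(x h_n^{1/2})$ is then a \emph{unitary} $L^2(e_n\cM e_n,\varphi^{(n)}) \to L^2(e_n \cM e_n, \tau^{(n)})$: it is isometric by the identity $\|\Lambda_\tau(x h_n^{1/2})\|_2^2 = \tau(h_n^{1/2}x^\ast x h_n^{1/2}) = \varphi(x^\ast x)$, and surjective because $h_n^{1/2}$ is invertible on the corner. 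Starting from maps $\{\Phi_k\}$ witnessing the Haagerup property of $(e_n\cM e_n, \tau^{(n)})$, with compact $L^2$-implementations $T_k \to 1$ strongly, I would set
\[
\Psi_k(x) = h_n^{-1/2}\,\Phi_k\!\big(h_n^{1/2} x h_n^{1/2}\big)\,h_n^{-1/2}, \qquad x \in e_n \cM e_n,
\]
which is completely positive as a composition of completely positive maps.

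The two things to check are the subordination and the behaviour of the $L^2$-implementation. For the first, using the trace property repeatedly,
\[
\varphi^{(n)}(\Psi_k(x)) = \tau\big(h_n^{1/2}\Phi_k(h_n^{1/2}xh_n^{1/2})h_n^{-1/2}\big) = \tau^{(n)}\big(\Phi_k(h_n^{1/2}xh_n^{1/2})\big) \le \tau^{(n)}(h_n^{1/2}xh_n^{1/2}) = \varphi^{(n)}(x)
\]
for $x \in (e_n\cM e_n)^+$, so $\varphi^{(n)}\circ \Psi_k \le \varphi^{(n)}$. For the second, a direct computation with $W$ gives $W S_k = h_n^{-1/2} T_k h_n^{1/2} W$, where $S_k$ is the $\varphi^{(n)}$-implementation of $\Psi_k$ and $h_n^{\pm 1/2}$ denote the bounded, boundedly invertible left-multiplication operators on $L^2(\tau^{(n)})$. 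Hence $S_k = W^\ast h_n^{-1/2} T_k h_n^{1/2} W$ is compact, being a product of bounded operators with the compact $T_k$; and since $h_n^{\pm 1/2}$ and $W$ are bounded and $T_k \to 1$ strongly, the identity $S_k \xi - \xi = W^\ast h_n^{-1/2}(T_k - 1) h_n^{1/2} W \xi$ shows $S_k \to 1$ strongly. Thus $(e_n\cM e_n, \varphi^{(n)})$ has the Haagerup property. The reverse implication is entirely symmetric, replacing $h_n$ by $h_n^{-1}$ (equivalently interchanging $\tau^{(n)}$ and $\varphi^{(n)}$).

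Finally, to reassemble the corners into $(\cM,\varphi)$ via the converse part of Lemma \ref{Lem=HPCorner} one needs the approximating maps to be contractive. I would first normalize the given maps to be contractive and then secure the same for the $\Psi_k$; this is exactly where care is needed, since a crude rescaling of $\Psi_k$ by a constant strictly below $1$ would destroy the strong convergence $S_k \to 1$. I expect this interplay between the $L^\infty$-level contractivity required for the gluing and the $L^2$-level convergence to be the main technical obstacle, the remainder being the bookkeeping indicated above. Once contractivity is secured, Lemma \ref{Lem=HPCorner} yields the equivalence of the Haagerup property for $(\cM,\tau)$ and $(\cM,\varphi)$.
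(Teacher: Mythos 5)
Your strategy coincides with the paper's: realize $\varphi = \tau_h$ via the Radon--Nikodym theorem, truncate by the spectral projections $e_n = \chi_{[1/n,n]}(h)$ (which, as you say, lie in the centralizers of both weights since $\sigma^\varphi_t = \mathrm{Ad}(h^{it})$), transport the approximating maps on each corner by conjugation with $h_n^{\pm 1/2}$, and reassemble with Lemma \ref{Lem=HPCorner}. Your fixed-corner computations — the subordination $\varphi^{(n)}\circ\Psi_k \le \varphi^{(n)}$, the identity $S_k = W^\ast h_n^{-1/2} T_k h_n^{1/2} W$, and the resulting compactness and strong convergence — are correct and match the paper's argument for the bounded, boundedly invertible case.

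The genuine gap is the step you defer to the end: securing contractivity of the corner maps before invoking the converse direction of Lemma \ref{Lem=HPCorner}. The conjugation inflates norms badly: $\Psi_k(e_n) = h_n^{-1/2}\Phi_k(h_n)h_n^{-1/2}$ can have norm of order $\Vert h_n\Vert\,\Vert h_n^{-1}\Vert \le n^2$, so even starting from contractive $\Phi_k$ the transported maps fail to be contractive, with bounds that grow with $n$ — and since in the gluing argument $n$ is tied to the finite set $F$, no uniform bound on the $L^2$-implementations survives. As you yourself observe, a crude rescaling by a constant below $1$ destroys the strong convergence $S_k \to 1$; but ``I would first normalize\dots then secure the same for the $\Psi_k$'' is a statement of intent, not a proof. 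This is exactly the point where the paper invokes \cite[Lemma 3.3]{CasSkaII} (noting it holds without the word `symmetric', with an essentially identical and independent proof), which shows that the maps witnessing the Haagerup property of each $(e_n\cM e_n, \varphi^{(n)})$ may be replaced by \emph{contractive} ones — and, per the authors' own footnote, it is precisely where the original arXiv version of this paper had a gap, later flagged by the referee and N.~Ozawa. Without that lemma or an equivalent device your argument does not close; everything else in your proposal is sound.
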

\begin{proof}
Let $h$ be the Radon-Nikodym derivative of $\varphi$ with respect to $\tau$, i.e.\ the self-adjoint operator affiliated with $\cM$ such that $\varphi = \tau_h$ (see \cite[Lemma VIII.2.8]{TakII}  for the meaning of the last formula). First we treat the case that $h$ is a bounded and boundedly invertible operator. In this case, we have $\varphi(x ) = \tau(h^{\frac{1}{2}} x h^{\frac{1}{2}})$ for every $x \in \cM^+$.  Let $\{ \Psi_k \}_{k \in \mathbb{N}}$ be the sequence of completely positive maps that witnesses that $(\cM, \tau)$ has the Haagerup property, i.e.\ satisfies conditions \ref{Item=HPOne} -- \ref{Item=HPThree} of Definition \ref{Dfn=HP} with $\varphi$ replaced by $\tau$. In particular, we let $T_k$ be the compact operator on $L^2(\cM, \tau)$ determined by $\Lambda_{\tau}(x) \mapsto \Lambda_{\tau}( \Psi_k(x)) $. Also, $T_k \rightarrow 1$ strongly.

Now, define $\Phi_k$ by setting,
\[
\Phi_k( x) = h^{-\frac{1}{2}} \Psi_k(h^{\frac{1}{2}}  x h^{\frac{1}{2}} )h^{-\frac{1}{2}}, \quad x \in \cM.
\]
Clearly, $\Phi_k$ is a completely positive map. Now, for $x \in \cM^+$ we have,
\[
\varphi(\Phi_k(x)) = \tau(\Psi_k (h^{\frac{1}{2}} x h^{\frac{1}{2}} ) ) \leq \tau(h^{\frac{1}{2}} x h^{\frac{1}{2}} ) = \varphi(x),
\]
and hence \ref{Item=HPOne} of Definition \ref{Dfn=HP} follows. For the GNS-map we have $\Lambda_{\varphi}(x) = \Lambda_{\tau}(x h^{\frac{1}{2}})$, where $x \in \nphi = \mathfrak{n}_{\tau} h^{-\frac{1}{2}}$. Recall that $\nphi$ and $ \mathfrak{n}_{\tau}$ are left ideals. Take $x \in \nphi$.  Then, $h^{\frac{1}{2}} x h^{\frac{1}{2}} \in  \mathfrak{n}_{\tau}$, hence $\Psi_k(h^{\frac{1}{2}} x h^{\frac{1}{2}} ) \in  \mathfrak{n}_{\tau}$, and hence $\Phi_k( x) = h^{-\frac{1}{2}} \Psi_k(h^{\frac{1}{2}}  x h^{\frac{1}{2}} )h^{-\frac{1}{2}} \in  \mathfrak{n}_{\varphi}$. Moreover,
\[
\begin{split}
\Lambda_{\varphi}(\Phi_k(x)) = & \Lambda_{\tau}( \Phi_k(x) h^{\frac{1}{2}} ) = h^{-\frac{1}{2}} \Lambda_{\tau} ( \Psi_k(h^{\frac{1}{2}} x h^{\frac{1}{2}} ) ) \\
= & h^{-\frac{1}{2}} T_k h^{\frac{1}{2}} \Lambda_{\tau}( x h^{\frac{1}{2}}) =  h^{-\frac{1}{2}} T_k h^{\frac{1}{2}} \Lambda_{\varphi}( x).
\end{split}
\]
It is clear that the mappings $\Lambda_{\varphi}(x) \mapsto \Lambda_{\varphi}(\Phi_k(x))$ extend to compact operators and as $k \rightarrow \infty$ tend strongly to 1.

Now we pass to the the general case. Let $h$ be as in the statement of the proposition and let $e_n$ be the spectral projection of $h$
for the interval $[\frac{1}{n}, n]$. Consider the corner algebra $e_n \cM e_n$. Both $\varphi$ and $\tau$ restrict to (normal, faithful) semi-finite weights on $e_n \cM e_n$ (see the first paragraph of the proof of Lemma \ref{Lem=HPCorner}). If $\varphi^{(n)}$ and $\tau^{(n)}$ denote their respective restrictions, then  $\varphi^{(n)} = \tau^{(n)}_{e_n h}$. Note that $e_n h$ is invertible in $e_n \cM e_n$. So, if $(\cM, \tau)$ has the Haagerup property, then  we see that $(e_n \cM e_n, \tau^{(n)} )$ has the Haagerup property for every $n$, c.f. Lemma \ref{Lem=HPCorner}. By the earlier paragraphs this implies that $(e_n \cM e_n, \varphi^{(n)} )$ has the Haagerup property for every $n\in \mathbb{N}$.  Moreover, \cite[Lemma 3.3]{CasSkaII} which also holds without the word `symmetric' in its statement, the proof being essentially the same and independent of any other results, shows that the approximating maps witnessing the Haagerup property for each $(e_n \cM e_n, \varphi^{(n)}), n \in \mathbb{N}$ may be chosen contractive \footnote{This is necessary to assure the uniform bounds on the $L^2$-implementations; the original arXiv version of this article had a gap here, we thank the referee and Narutaka Ozawa for pointing this out.}.  Applying Lemma \ref{Lem=HPCorner} once more we see that $(\cM, \varphi)$ has the Haagerup property.

\end{proof}

\begin{thm}\label{Thm=HPSemiFinite}
Let $\cM$ be a semi-finite von Neumann algebra. Let $\varphi$ and $\psi$ be normal, semi-finite, faithful weights on $\cM$. Then $(\cM, \varphi)$ has the Haagerup property if and only if $(\cM, \psi)$ has the Haagerup property. In particular the Haagerup property is an invariant of $\cM$.
\end{thm}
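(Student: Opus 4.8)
The plan is to use Proposition \ref{Prop=CocycleInvariance} together with a fixed trace as an intermediary, so that weight independence for a semi-finite algebra reduces immediately to the comparison of an arbitrary weight with that trace. Since $\cM$ is semi-finite, by definition it carries a normal, semi-finite, faithful trace $\tau$; I would fix one such $\tau$ once and for all, and use $(\cM, \tau)$ as a common reference point.

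Now, applying Proposition \ref{Prop=CocycleInvariance} to the weight $\varphi$ yields that $(\cM, \varphi)$ has the Haagerup property if and only if $(\cM, \tau)$ does. Applying the same proposition to $\psi$ gives that $(\cM, \psi)$ has the Haagerup property if and only if $(\cM, \tau)$ does. Chaining these two equivalences through $(\cM, \tau)$, I conclude that $(\cM, \varphi)$ has the Haagerup property if and only if $(\cM, \psi)$ does. Since $\varphi$ and $\psi$ were arbitrary, the Haagerup property of $(\cM, \varphi)$ does not depend on the choice of the weight, so it is an invariant of $\cM$ alone.

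In effect all the genuine content has already been absorbed into Proposition \ref{Prop=CocycleInvariance} --- in particular the Radon--Nikodym reduction to a bounded, boundedly invertible derivative $h$, the corner argument of Lemma \ref{Lem=HPCorner} handling the general spectral cutoffs $e_n$, and the contractivity normalization guaranteeing uniform bounds on the $L^2$-implementations. Consequently the theorem is a formal corollary, and I expect no additional obstacle; the only point worth recording is that semi-finiteness is precisely what furnishes the intermediary trace $\tau$ through which the two weights are compared.
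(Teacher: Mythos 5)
Your proposal is correct and takes essentially the same route as the paper: the paper's proof also fixes a normal, semi-finite, faithful trace $\tau$ (its ``without loss of generality, $\varphi = \tau$'' step) and then invokes Proposition \ref{Prop=CocycleInvariance}, noting via the Radon--Nikodym theorem \cite[Theorem VIII.3.14]{TakII} that every normal, semi-finite, faithful weight is of the form $\tau_h$. Your explicit chaining of the two equivalences through $(\cM,\tau)$ is just a slightly more spelled-out version of the same one-line reduction.
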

\begin{proof}
Without loss of generality, we may assume that $\varphi = \tau$ is a normal, semi-finite, faithful trace. But then, the theorem follows from Proposition \ref{Prop=CocycleInvariance}, since every normal, semi-finite, faithful weight $\psi$ is of the form $\varphi_h$ for some self-adjoint $h$ affiliated with $\cM$. See \cite[Theorem VIII.3.14]{TakII} for details.
\end{proof}

\section{The Haagerup property for arbitrary von Neumann algebras}\label{Sect=TypeIII}

In this section we treat general  von Neumann algebras (with separable preduals) and prove one of the main results of this article: the Haagerup property of a von Neumann algebra does not depend on the choice of the weight. Our main tool is crossed product duality: we show that the Haagerup property is preserved under passage to the crossed product by an action satisfying suitable conditions. Our proofs should to some extent be compared to those of Haagerup and Kraus \cite{HaaKrau}.

\subsection{Preliminaries regarding von Neumann algebraic crossed products} \label{Subsection:crossprel}

We recall the necessary preliminaries on crossed products, for which we refer to \cite{TakII}, \cite{Tak73}, \cite{HaaMathScanI} and  \cite{HaaMathScanII} (we follow the notation and terminology of \cite{HaaKrau}). Let $\cM$ be a von Neumann algebra equipped with a normal, semi-finite, faithful weight $\varphi$. The Hilbert space on which $\cM$ acts will be denoted by $\cH$. Furthermore, let $G$ be a locally compact group (with a fixed left invariant Haar measure, the resulting Hilbert space $L^2G$ and the modular function $\Delta_G$; the norm in $L^1 G$ will be denoted by $\Vert \cdot \Vert_1$) and $\alpha: G \rightarrow \Aut(\cM)$ be a strongly continuous group of automorphisms of $\cM$. The crossed product von Neumann algebra $\cN := \cM \rtimes_\alpha G$ is the von Neumann algebra acting on $\cH \otimes L^2G \simeq L^2(G, \cH)$ generated by the operators
\[
\pi(x), \;\;  x \in \cM, \qquad \textrm{ and } \qquad \lambda(t),  \;\; t \in G,
\]
that are determined by the formulas:
\[
\begin{array}{rll}
(\pi(x)\xi)(s) =& \alpha_{s}^{-1}(x) \xi(s),& \qquad s \in G, \\
(\lambda(t) \xi)(s) =& \xi(t^{-1}s),& \qquad s \in G.
\end{array}
\]

We shall work with the following useful presentation for the crossed product (see \cite{HaaMathScanI}, \cite{HaaMathScanII}). Let $C_c(G, \cM)$ denote the space of compactly supported $\sigma$-strong-$\ast$ continuous functions on $G$ with values in $\cM$. We endow $C_c(G, \cM)$ with an involution and a convolution product as follows: for $x, y \in C_c(G, \cM)$ we set
\[
(x \ast y)(s) = \int_G \alpha_t(x(st) ) y(t^{-1}) dt, \qquad x^\sharp(t) = \Delta_G(t)^{-1} \alpha_t^{-1}(x(t^{-1})^\ast).
\]
Furthermore, we set
\[
\mu(x) = \int_G \lambda(t) \pi(x(t)) dt.
\]
Then $\mu$ defines an involutive representation of $C_c(G, \cM)$ on $L^2(G, \cH)$ which maps into the crossed product $\cN$. For $f \in C_c(G)$ we define $x(f) \in C_c(G, \cM)$ by the formula $x(f)(s) = f(s) 1_{\cM}$. Then
\[
\mu(x(f)) = \lambda(f).
\]
We shall also use the following formula. Fix $x \in \cM$ and $f \in C_c(G)$. Put $a(s) = f(s) \alpha_s^{-1}(x), s \in G$, so that $a \in C_c(G, \cM)$. Then,
\begin{equation}\label{Eqn=MuEmbedding}
\mu(a) = \int_G \lambda(s) \pi(a(s)) ds =   \int_G  \lambda(s) f(s) \pi( \alpha_s^{-1}(x)) ds = \int f(s) \pi(x) \lambda(s) ds = \pi(x) \lambda(f).
\end{equation}

Recall the extended positive part $\cM^+_{{\rm ext}}$ of $\cM$, as defined in \cite[Definition IX.4.4]{TakII}. We refer to \cite[Section IX.4]{TakII} for the theory of operator valued weights (see also \cite{HaaOVW}). Let $T: \cN^+ \rightarrow \cM^+_{{\rm ext}}$ be the normal, semi-finite, faithful operator valued weight constructed in   \cite[Theorem 3.1]{HaaMathScanII}. By \cite[Theorem 3.1 (c)]{HaaMathScanII} it satisfies the property
\begin{equation}\label{Eqn=TWeight}
T(\mu( x^\sharp \ast x) ) = \pi((x^\sharp \ast x)(e)), \qquad x \in C_c(G, \cM).
\end{equation}

In fact the articles \cite{HaaMathScanI}, \cite{HaaMathScanII} contain the following result, concerning the core of the GNS map for the \emph{dual weight} $\tilde{\varphi}$.

\begin{lem}\label{Lem=CoreLemma}
Let $\varphi$ be a normal, semi-finite, faithful weight $\varphi$ on $\cM$ and set $\tilde{\varphi} = \varphi \circ \pi^{-1}  \circ T$. Then, $\tilde{\varphi}$ is a normal, semi-finite, faithful weight on $\cN$. Let,
\[
B_\varphi = {\rm span } \: \left\{ a \cdot x \mid a \in C_c(G, \cM), x \in \mathfrak{n}_\varphi \right\}.
\]
 The set $\mu(B_\varphi)$ is contained in $\mathfrak{n}_{\tilde{\varphi}}$ and is a $\sigma$-weak/norm core for the GNS-map $\Lambda_{\tilde{\varphi}}$.
\end{lem}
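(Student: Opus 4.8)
The plan is to extract all three assertions from the defining identity \eqref{Eqn=TWeight} of the operator valued weight $T$, supplemented by the Fubini-type Lemma \ref{Lem=Fubini}. That $\tilde{\varphi} = \varphi \circ \pi^{-1} \circ T$ is again a normal, semi-finite, faithful weight on $\cN$ is a general fact of the operator valued weight calculus: the composite of a normal, semi-finite, faithful operator valued weight $T \colon \cN^+ \to \cM^+_{{\rm ext}}$ with a normal, semi-finite, faithful weight $\varphi$ on $\cM$ (transported along the normal isomorphism $\pi$ of $\cM$ onto $\pi(\cM) \subseteq \cN$) is such a weight. I would simply quote this from \cite[Section IX.4]{TakII} or \cite{HaaMathScanI}, so that the genuine content lies in describing the GNS data of $\tilde{\varphi}$.

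For the inclusion $\mu(B_\varphi) \subseteq \mathfrak{n}_{\tilde{\varphi}}$ I would argue by direct computation. Fix $b \in B_\varphi$; being a finite linear combination of functions $a \cdot x$ with $a \in C_c(G,\cM)$ and $x \in \nphi$, and since $\nphi$ is a left ideal, $b$ lies in $C_c(G,\cM)$ with $b(t) \in \nphi$ for all $t$ and $t \mapsto \Lambda_\varphi(b(t))$ norm-continuous and compactly supported. Writing $\mu(b)^\ast \mu(b) = \mu(b^\sharp \ast b)$ and unfolding the convolution and the involution, the Haar identity $\int_G \Delta_G(t)^{-1} h(t^{-1})\,dt = \int_G h(t)\,dt$ gives $(b^\sharp \ast b)(e) = \int_G b(t)^\ast b(t)\,dt$. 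Substituting this into \eqref{Eqn=TWeight}, applying $\varphi \circ \pi^{-1}$, and invoking Lemma \ref{Lem=Fubini} yields
\[
\tilde{\varphi}(\mu(b)^\ast \mu(b)) = \varphi\!\left( \int_G b(t)^\ast b(t)\, dt \right) = \int_G \varphi(b(t)^\ast b(t))\, dt = \int_G \|\Lambda_\varphi(b(t))\|_2^2\, dt < \infty,
\]
so $\mu(b) \in \mathfrak{n}_{\tilde{\varphi}}$.

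The same identity, after polarization, shows that $V \colon \Lambda_{\tilde{\varphi}}(\mu(b)) \mapsto \big(t \mapsto \Lambda_\varphi(b(t))\big)$ is a well-defined isometry from $\Lambda_{\tilde{\varphi}}(\mu(B_\varphi))$ into $L^2(G) \otimes L^2(\cM,\varphi)$. Choosing $b(t) = f(t) x$ with $f \in C_c(G)$ and $x \in \nphi$ produces exactly the vectors $f \otimes \Lambda_\varphi(x)$, whose linear span is dense, so $V$ has dense range. Taking as input Haagerup's concrete model $L^2(\cN,\tilde{\varphi}) \cong L^2(G) \otimes L^2(\cM,\varphi)$ from \cite{HaaMathScanI}, \cite{HaaMathScanII}, it follows that $V$ extends to a unitary and that $\Lambda_{\tilde{\varphi}}(\mu(B_\varphi))$ is total in $L^2(\cN,\tilde{\varphi})$.

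The main obstacle is upgrading this totality of the \emph{range} to the statement that $\mu(B_\varphi)$ is a $\sigma$-weak/norm \emph{core}, since density of $\Lambda_{\tilde{\varphi}}(\mu(B_\varphi))$ does not by itself force density of the graph of $\Lambda_{\tilde{\varphi}}|_{\mu(B_\varphi)}$. I would reduce this to the fact that the class of $\nphi$-valued, $L^2$-continuous, compactly supported functions gives a core for $\Lambda_{\tilde{\varphi}}$ (which is precisely the content built into the dual-weight construction of \cite{HaaMathScanII}), and then approximate such a function $a$ by elements of $B_\varphi$ through Riemann sums: using a partition of unity $\{\chi_j\}$ subordinate to a fine cover of $\mathrm{supp}(a)$ and sample points $t_j$, the elements $\sum_j \chi_j \cdot a(t_j) \in B_\varphi$ satisfy $\mu(\sum_j \chi_j \cdot a(t_j)) = \sum_j \lambda(\chi_j)\pi(a(t_j)) \to \mu(a)$ $\sigma$-weakly, while the continuity of $t \mapsto \Lambda_\varphi(a(t))$ forces the corresponding vectors $\sum_j \chi_j \otimes \Lambda_\varphi(a(t_j))$ to converge in $L^2$-norm to $V\Lambda_{\tilde{\varphi}}(\mu(a))$. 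This delivers graph density, hence the core property; the structural reason the approximation stays inside $B_\varphi$ is that $B_\varphi$ is stable under left convolution, $a \ast (c \cdot x) = (a \ast c)\cdot x$. The safest presentation, given that the lemma is explicitly attributed to \cite{HaaMathScanI}, \cite{HaaMathScanII}, is to carry out Steps~1--3 in detail and then cite the core property from there, the present lemma being its translation into the notation above.
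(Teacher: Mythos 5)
Your proposal is correct, but it does substantially more work than the paper, whose entire proof of this lemma is a citation: the core statement is read off directly from \cite[Definition 3.1]{HaaMathScanI}, where the dual weight is \emph{defined} via a left Hilbert algebra structure on precisely this kind of subspace, and \cite[Theorem 3.1 (d)]{HaaMathScanII} is invoked only to identify that definition with $\varphi \circ \pi^{-1} \circ T$ (normality, semi-finiteness and faithfulness of $\tilde{\varphi}$ being part of the same package, as you also note via the operator valued weight calculus). Your direct verification of $\mu(B_\varphi) \subseteq \mathfrak{n}_{\tilde{\varphi}}$ is sound: $B_\varphi \subseteq C_c(G,\cM)$ because $\nphi$ is a left ideal and right multiplication by a fixed element preserves $\sigma$-strong-$\ast$ continuity, your formula $(b^\sharp \ast b)(e) = \int_G b(t)^\ast b(t)\, dt$ is consistent with the paper's conventions for $\sharp$ and $\ast$, and the appeal to \eqref{Eqn=TWeight} and Lemma~\ref{Lem=Fubini} is exactly right. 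You are also correct --- and this is the key structural point --- that totality of $\Lambda_{\tilde{\varphi}}(\mu(B_\varphi))$ does not by itself give the $\sigma$-weak/norm core property; this is why the paper cites Haagerup's construction rather than arguing by density. Your Riemann-sum reduction from $\nphi$-valued, compactly supported, $L^2$-continuous functions to $B_\varphi$ is a genuine supplement absent from the paper, and it works; the one step deserving a word is the $\sigma$-weak convergence $\mu\bigl(\sum_j \chi_j \cdot a(t_j)\bigr) \rightarrow \mu(a)$, which holds because the net is uniformly bounded with supports in a fixed compact set, so dominated convergence applies when testing against vector functionals. There is also a mild circularity to watch in your unitarity claim: to conclude that $V$ extends to a unitary you must know that Haagerup's identification of $L^2(\cN, \tilde{\varphi})$ with $L^2(G) \otimes L^2(\cM,\varphi)$ restricts to $V$ on your domain, which again comes from the construction itself rather than from your computation. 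Since your closing sentence proposes exactly the presentation the paper adopts, the two proofs coincide where the real weight is carried; what your version buys is a self-contained account of the $L^2$-identification and the membership $\mu(B_\varphi) \subseteq \mathfrak{n}_{\tilde{\varphi}}$ that the paper leaves entirely to the references and then uses implicitly later (e.g.\ in Lemma \ref{Lem=HaagerupKrausDual}).
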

\begin{proof}
The statement about the core follows from \cite[Definition 3.1]{HaaMathScanI}. We refer the reader to \cite[Theorem 3.1 (d)]{HaaMathScanII} to see that the definition of $\tilde{\varphi}$ given in  \cite[Definition 3.1]{HaaMathScanI} indeed agrees with the one adopted above: $\tilde{\varphi} = \varphi \circ \pi^{-1} \circ T$.
\end{proof}

Using \eqref{Eqn=TWeight} one may prove the following statement.

\begin{lem}[c.f. Lemma 3.5 of  \cite{HaaKrau}]\label{Lem=HaagerupKrausComputation}
Let $\cM, \cN, \alpha,  T$ be as above. Let $f \in L^2(G)$ be such that $g \mapsto f \ast g, g \in C_c(G)$ extends to a bounded operator $\lambda(f)$ on $L^2(G)$. Then $\lambda(f) \in \mathfrak{n}_T$ and the map $T_f: \cN \rightarrow \cM$ defined via the prescription
\[
T_f(y) = T(\lambda(f)^\ast y \lambda(f)), \;\;\; y \in \cN,
\]
is a normal, completely positive map. Moreover, for $x \in \cM$ we have
\begin{equation}\label{Eqn=Tf}
T_{f}(\pi(x)) = \pi\left( \int_{G} \vert f(s) \vert^2 \alpha_{s}^{-1}(x) ds \right).
\end{equation}
\end{lem}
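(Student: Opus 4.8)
The plan is to reduce everything to the defining property \eqref{Eqn=TWeight} of the operator valued weight $T$, establishing the statement first for $f \in C_c(G)$ by a direct computation in the convolution algebra $C_c(G,\cM)$, and then passing to a general left convolver by an approximation/normality argument. First I would treat $f \in C_c(G)$. Here $\lambda(f) = \mu(x(f))$ with $x(f)(s) = f(s)1_\cM$, and since $\mu$ is an involutive representation, $\lambda(f)^\ast \lambda(f) = \mu(x(f))^\ast \mu(x(f)) = \mu(x(f)^\sharp \ast x(f))$. From the formulas for $\sharp$ and $\ast$ one gets $x(f)^\sharp(t) = \Delta_G(t)^{-1}\overline{f(t^{-1})}1_\cM$, whence $(x(f)^\sharp \ast x(f))(e) = \big(\int_G \Delta_G(t)^{-1}|f(t^{-1})|^2\, dt\big)1_\cM = \|f\|_2^2\, 1_\cM$ after the substitution $t \mapsto t^{-1}$. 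Applying \eqref{Eqn=TWeight} gives $T(\lambda(f)^\ast \lambda(f)) = \|f\|_2^2\, 1$, which is bounded, so $\lambda(f) \in \mathfrak{n}_T$.

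Next, for the formula \eqref{Eqn=Tf}, I would fix $x \in \cM$ and invoke the embedding identity \eqref{Eqn=MuEmbedding}: with $a(s) = f(s)\alpha_s^{-1}(x)$ we have $a \in C_c(G,\cM)$ and $\pi(x)\lambda(f) = \mu(a)$. Since $\mathfrak{n}_T$ is a left ideal (from $(zy)^\ast(zy) \le \|z\|^2 y^\ast y$), $\pi(x)\lambda(f) \in \mathfrak{n}_T$, and therefore $\lambda(f)^\ast \pi(x)\lambda(f) = \mu(x(f))^\ast \mu(a) = \mu(x(f)^\sharp \ast a)$ lies in $\mathfrak{n}_T^\ast \mathfrak{n}_T \subseteq \mathfrak{m}_T$. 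Polarizing \eqref{Eqn=TWeight} yields $T(\mu(b)^\ast\mu(c)) = \pi((b^\sharp \ast c)(e))$ for $b,c \in C_c(G,\cM)$ with $\mu(b),\mu(c)\in\mathfrak{n}_T$, so $T(\lambda(f)^\ast\pi(x)\lambda(f)) = \pi((x(f)^\sharp \ast a)(e))$. Evaluating the convolution at $e$ and substituting $t \mapsto t^{-1}$ inside the resulting $\cM$-valued integral gives $(x(f)^\sharp \ast a)(e) = \int_G |f(s)|^2\alpha_s^{-1}(x)\, ds$, which is exactly \eqref{Eqn=Tf}.

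Complete positivity and normality of $T_f$ then follow formally once $\lambda(f)\in\mathfrak{n}_T$ is known. The map $y \mapsto \lambda(f)^\ast y\lambda(f)$ is normal and completely positive, and for $y \in \cN^+$ one has $\lambda(f)^\ast y\lambda(f) \le \|y\|\,\lambda(f)^\ast\lambda(f)$, so by positivity of $T$ the value $T_f(y) = T(\lambda(f)^\ast y\lambda(f)) \le \|y\|\,\|f\|_2^2\, 1$ is bounded; thus $T_f$ indeed maps $\cN$ into $\cM$. Complete positivity and normality of $T_f$ are inherited from the corresponding properties of $T$ on its definition domain $\mathfrak{m}_T$ (using that $T\otimes\id$ is again an operator valued weight to handle matrix amplifications).

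Finally I would remove the hypothesis $f \in C_c(G)$. Given a general $f \in L^2(G)$ whose left convolution is bounded, I would approximate it by $f_n \in C_c(G)$ with $f_n \to f$ in $L^2$ and $\lambda(f_n) \to \lambda(f)$ in an operator topology strong enough to exploit the normality (lower semicontinuity) of $T$, and then pass to the limit in the two identities above; conceptually the normalization $\|f\|_2^2$ reflects that $T$ restricts to the Plancherel weight on the copy of $\cL(G)$ sitting inside $\cN$. I expect this last step to be the main obstacle: one must reconcile $L^2$-convergence of the $f_n$ (needed to control the right-hand sides $\|f\|_2^2$ and $\int_G|f(s)|^2\alpha_s^{-1}(x)\,ds$) with a sufficiently strong operator convergence of $\lambda(f_n)$, so that both the membership $\lambda(f)\in\mathfrak{n}_T$ and the formula \eqref{Eqn=Tf} survive the limit. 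The $C_c(G)$ computation itself is essentially bookkeeping with the convolution algebra and \eqref{Eqn=TWeight}.
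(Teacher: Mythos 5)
Your computation for $f \in C_c(G)$ is correct and complete: the identity $(x(f)^\sharp \ast x(f))(e) = \Vert f \Vert_{L^2 G}^2 \, 1_{\cM}$ via the substitution $t \mapsto t^{-1}$, the use of \eqref{Eqn=MuEmbedding} to write $\pi(x)\lambda(f) = \mu(a)$, and the polarized form of \eqref{Eqn=TWeight} together give \eqref{Eqn=Tf} for compactly supported $f$; the deduction of normality and complete positivity of $T_f$ from $\lambda(f) \in \mathfrak{n}_T$ is likewise standard and fine. But note that this part is exactly \cite[Lemma 3.5]{HaaKrau}, which the paper simply cites; the actual new content of the lemma is the passage to a general $f \in L^2(G)$ with $\lambda(f)$ bounded, and that is precisely the step you leave unresolved. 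The route you sketch there has a real obstruction, not just a technical one: the natural approximants (coming from the fact that $\lambda(C_c(G))$ is a $\sigma$-weak/norm core for the GNS map of the Plancherel weight) give a net $f_j \in C_c(G)$ with $\lambda(f_j) \to \lambda(f)$ only $\sigma$-weakly and with no uniform bound on $\Vert \lambda(f_j) \Vert$, so the compressions $\lambda(f_j)^\ast y \lambda(f_j)$ need not converge in any useful topology (multiplication is not jointly $\sigma$-weakly continuous); conversely, a Kaplansky-type bounded strongly convergent net would forfeit the simultaneous control $\Vert f_j - f \Vert_{L^2 G} \to 0$ needed for the right-hand sides. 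Moreover, even where it applies, lower semicontinuity can only yield the inequality $T(\lambda(f)^\ast \lambda(f)) \leq \Vert f \Vert_{L^2 G}^2\, 1$, whereas \eqref{Eqn=Tf} is an equality.

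The paper resolves exactly this tension by scalarizing rather than passing to limits at the level of operators. Fix $\omega \in \cM_\ast^+$ and consider the normal weight $\omega \circ T$ on $\cN$. The core property furnishes a single net $f_j \in C_c(G)$ with $\lambda(f_j) \to \lambda(f)$ $\sigma$-weakly \emph{and} $\Vert f_j - f \Vert_{L^2 G} \to 0$; polarizing your $C_c$-formula gives $\omega \circ T(\lambda(f_i)^\ast \lambda(f_j)) = \omega(1)\langle f_j, f_i \rangle$, so the GNS vectors $\Lambda_{\omega \circ T}(\lambda(f_j))$ form a Cauchy net, and since the GNS map $\Lambda_{\omega \circ T}$ is $\sigma$-weak/norm \emph{closed}, one concludes $\lambda(f) \in \mathfrak{n}_{\omega \circ T}$ with $\Lambda_{\omega \circ T}(\lambda(f)) = \lim_j \Lambda_{\omega \circ T}(\lambda(f_j))$. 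As this holds for every $\omega \in \cM_\ast^+$, the element $T(\lambda(f)^\ast \lambda(f))$ of the extended positive part is in fact in $\cM^+$, i.e.\ $\lambda(f) \in \mathfrak{n}_T$, and the equality \eqref{Eqn=Tf} then follows from the GNS identity $\omega \circ T_f(\pi(x)) = \langle x \Lambda_{\omega \circ T}(\lambda(f)), \Lambda_{\omega \circ T}(\lambda(f)) \rangle$ by taking the limit in $j$, using $f_j \to f$ in $L^2(G)$ and the continuity and boundedness of $s \mapsto \omega(\alpha_s^{-1}(x))$. To complete your proof you would need to supply this closedness-of-the-GNS-map argument (or an equivalent); the semicontinuity strategy as stated does not suffice.
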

\begin{proof}
By \cite[Lemma 3.5]{HaaKrau} the statement is true for $f \in C_c(G)$. Recall that $\lambda(C_c(G))$ is a $\sigma$-weak/norm core for the GNS-map of the Plancherel weight on the group von Neumann algebra of $G$ \cite[Definition VII.3.2]{TakII}. So in case of a general $f \in L^2(G)$ as above, we may take a net $f_j \in C_c(G)$ such that $\lambda(f_j) \rightarrow \lambda(f)$ $\sigma$-weakly and $\Vert f_j - f \Vert_{L^2(G)} \rightarrow 0$. Let $\omega \in \cM_\ast$. Using polarization \eqref{Eqn=Tf} implies,
\[
\omega \circ T(\lambda(f_i)^\ast   \lambda(f_j) ) =    \omega(1) \langle f_j, f_i \rangle_{L^2(G), L^2(G)}.
\]
It follows that $\Lambda_{\omega \circ T}(\lambda(f_j))$ is a Cauchy net as $f_j$ is a Cauchy net. Since $\Lambda_{\omega \circ T}$ is $\sigma$-weak/norm closed it follows that $\lambda(f) \in \mathfrak{n}_{\omega \circ T}$  for every $\omega \in \cM_\ast$. This implies that $T(\lambda(f)^\ast \lambda(f)) \in \cM^+$ and so $\lambda(f) \in \mathfrak{n}_T$. Then $T_f$ is a normal completely positive map. Finally, let again $\omega \in \cM_\ast$. Since $\alpha$ is strongly continuous $s \mapsto \omega(\alpha_s(x))$ is continuous for every $x \in \cM$ \cite[Theorem II.2.6]{Tak1}. Then,
\[
\begin{split}
&\omega \circ T_{f}( \pi(x) ) = \langle x \Lambda_{\omega \circ T}(\lambda(f)), \Lambda_{\omega \circ T}(\lambda(f)) \rangle \\
=&  \lim_j \langle x \Lambda_{\omega \circ T}(\lambda(f_j)), \Lambda_{\omega \circ T}(\lambda(f_j)) \rangle =
\lim_j \int_{G} \vert f_j(s) \vert^2 \omega(\alpha_{s}^{-1}(x)) ds \\
= & \int_{G} \vert f(s) \vert^2 \omega(\alpha_{s}^{-1}(x)) ds.
\end{split}
\]
This completes the proof.
\end{proof}
Next, we recall the constructions dual to those we have introduced so far.
The von Neumann algebra $\cN$ can naturally be viewed as a von Neumann subalgebra of the ultraweak tensor product $\cM \wot B(L^2 G)$. Explicitly the embedding is given as follows: the group von Neumann algebra $\mathcal{L}(G) \subseteq \cN$ is embedded inside $\cM \wot B(L^2 G)$ by $\lambda(t) \mapsto 1_{\cM} \otimes \lambda(t)$, where $t \in G$, and each operator $\pi(x)$ ($x \in \cM$) can be identified with the operator $(\pi(x)\xi)(s) = \alpha_s^{-1}(x) \xi(s)$, where $\xi \in L^2(G, \cH)$. For $s \in G$ and $f \in L^2G$ define
\[
(r_s f)(t) = \Delta_G(s)^{\frac{1}{2}} f(ts), \qquad \rho_s = {\rm Ad}\: r_s \qquad {\rm and} \qquad \beta_s = \alpha_s \otimes \rho_s,
\]
so that $\rho$ and $\beta$ are strongly continuous  groups of automorphisms acting on respectively $\cB(L^2 G)$ and $\cM \wot \cB(L^2 G)$. Then $\cN$ equals exactly the fixed point algebra of $\beta$. Moreover, there exists a normal, semi-finite, faithful operator valued weight $S$:
\[
S: (\cM \wot  \cB(L^2 G))^+ \rightarrow \cN^+_{{\rm ext}}:\;\; x \mapsto \int_G \beta_s(x) ds.
\]
If $\psi$ is a weight on $\cN$, then the composition $\psi \circ S$ is a weight on $\cM \wot \cB(L^2 G)$,  preserved under the action of $\beta$.  We also set, for $t \in G, f\in C_c(G)$,
\[
_tf(s) = f(ts), \qquad { f_t(s) = f(st), \qquad f^\ast(s) = \overline{f(s^{-1})}\Delta_G(s^{-1}), \qquad s \in G,}
\]
so that for example $\lambda(f^\ast) = \lambda(f)^\ast$. Finally define (for each $f \in L^{\infty}(G)$) $\nu(f) = 1_{\cM} \otimes f \in \cM \wot \cB(L^2G)$, where the right leg is understood as a multiplication operator. One may then verify that
\begin{equation}\label{Eqn=BetaNu}
\beta_t(\nu(f)) = \nu(f_t), \qquad t \in G.
\end{equation}

\vspace{0.3cm}

In the special case of $G = \mathbb{R}$, the operator valued weight $T$ takes the following explicit form. Firstly, there exists a unique strongly continuous 1-parameter automorphism group $\mathbb{R} \rightarrow \Aut(\cN)$ called the {\it dual action}, determined by the formulas
\begin{equation}\label{Eqn=DualAction}
\theta_s(\pi(x)) = \pi(x), \quad \theta_s(\lambda(t)) = e^{-ist} \lambda(t), \qquad x \in \cM, s,t \in \mathbb{R}.
\end{equation}
Further $\pi(\cM)$ is precisely the fixed point algebra $\{ y \in \cN \mid \forall s \in \mathbb{R}: \theta_s(y) = y \}$.  Thus $T$ is determined by the prescriptions
\begin{equation}\label{Eqn=TAlternative}
T: \cN^+ \rightarrow \cM^+_{{\rm ext}}: y \mapsto   \int_{\mathbb{R}} \theta_s(y) ds,
\end{equation}
and the dual weight of $\varphi$ is defined as $\tilde{\varphi} = \varphi \circ \pi^{-1} \circ T$. Note that we may identify $\cM$ with $\pi(\cM)$ via the normal map $\pi$; this gives the proper interpretation to \eqref{Eqn=TAlternative}.  One may then check that $\tilde{\varphi}$ is a normal, semi-finite weight on $\cN$ which is faithful if $\varphi$ is faithful. Clearly, $\tilde{\varphi} \circ \theta_s = \tilde{\varphi}$ for every $s \in \mathbb{R}$.

Finally note that in what follows we shall often need  a standard Fubini type argument, allowing us to exchange the $\sigma$-weak integrals of $\cM$-valued functions with a normal weight $\varphi$, contained in Lemma \ref{Lem=Fubini}. We shall sometimes use it  in the proofs without further reference.

\subsection{The approximating maps and the Haagerup property for crossed products}

In the remainder of this section we use the framework introduced above to study the Haagerup property and its behaviour with respect to passing to crossed products.

{

\begin{lem} \label{Lem=SemiFiniteCornerI}
Let $\cM, \cN, \varphi, \tilde{\varphi}, \alpha, G$ be as in Subsection \ref{Subsection:crossprel}. Assume moreover that $\alpha$ is $\varphi$-preserving. Let $f \in L^2(G)$ be such that $g \mapsto f \ast g, g \in L^2(G)$ extends to a bounded operator $\lambda(f)$ on $L^2(G)$. Moreover, suppose that   $\lambda(f) \geq 0$. Let $e_f \in \cN$ be the  support projection of $\lambda(f)$. The weight $\tilde{\varphi}_f(x) := \tilde{\varphi}(\lambda(f) x \lambda(f))$ restricted to $x \in e_f \cN e_f$ is semi-finite.
\end{lem}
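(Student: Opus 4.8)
The plan is to produce a large supply of elements of $e_f \cN e_f$ on which $\tilde{\varphi}_f$ is finite and to show that they are $\sigma$-weakly dense; by the standard criterion (density of the left ideal $\mathfrak{n}$) this is precisely semi-finiteness of $\tilde{\varphi}_f|_{e_f \cN e_f}$. First I would record the basic reformulation
\[
\mathfrak{n}_{\tilde{\varphi}_f} = \left\{ x \in \cN : x\lambda(f) \in \mathfrak{n}_{\tilde{\varphi}} \right\},
\]
which follows from $\tilde{\varphi}_f(x^\ast x) = \tilde{\varphi}(\lambda(f) x^\ast x \lambda(f)) = \Vert \Lambda_{\tilde{\varphi}}(x \lambda(f)) \Vert_2^2$, valid since $\lambda(f)^\ast = \lambda(f)$ (as $\lambda(f) \geq 0$). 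Because $e_f$ is the support projection of the positive operator $\lambda(f)$ one has $e_f \lambda(f) = \lambda(f)$, so for a compression $x = e_f y e_f$ the relevant product collapses to $x\lambda(f) = e_f y \lambda(f)$; since $\mathfrak{n}_{\tilde{\varphi}}$ is a left ideal it then suffices to arrange $y\lambda(f) \in \mathfrak{n}_{\tilde{\varphi}}$.

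The key computation is that for $z \in \nphi$ and any $h \in L^2(G)$ with $\lambda(h)$ bounded one has $\pi(z)\lambda(h) \in \mathfrak{n}_{\tilde{\varphi}}$. Indeed, by Lemma \ref{Lem=HaagerupKrausComputation} and formula \eqref{Eqn=Tf},
\[
\tilde{\varphi}\big( (\pi(z)\lambda(h))^\ast \pi(z)\lambda(h) \big) = \varphi\Big( \pi^{-1}\big( T(\lambda(h)^\ast \pi(z^\ast z) \lambda(h)) \big) \Big) = \varphi\Big( \int_G |h(s)|^2 \alpha_s^{-1}(z^\ast z)\, ds \Big),
\]
and invoking the $\varphi$-invariance of $\alpha$ together with Lemma \ref{Lem=Fubini} this equals $\Vert h \Vert_2^2\, \varphi(z^\ast z) < \infty$. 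I would then apply this with $h = g \ast f$ for $g \in C_c(G)$: since $\lambda(g \ast f) = \lambda(g)\lambda(f)$ is bounded and $g \ast f = \lambda(g) f \in L^2(G)$, the element $\pi(z)\lambda(g)\lambda(f) = \pi(z)\lambda(g \ast f)$ lies in $\mathfrak{n}_{\tilde{\varphi}}$. Consequently, taking $y = \pi(z)\lambda(g)$, the compressions $x = e_f \pi(z)\lambda(g) e_f$ satisfy $x\lambda(f) = e_f \pi(z)\lambda(g \ast f) \in \mathfrak{n}_{\tilde{\varphi}}$ (again using the left-ideal property), so $x \in \mathfrak{n}_{\tilde{\varphi}_f} \cap e_f \cN e_f$.

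It then remains to check density. The linear span of $\{ \pi(z)\lambda(g) : z \in \nphi,\, g \in C_c(G)\}$ is $\sigma$-weakly dense in $\cN$: this is the usual generation of the crossed product by $\pi(\cM)$ and $\lambda(C_c(G))$, combined with the $\sigma$-weak density of $\nphi$ in $\cM$ and the $\sigma$-weak continuity of $z \mapsto \pi(z)\lambda(g)$. Compressing by the normal map $y \mapsto e_f y e_f$ shows that the span of the $e_f \pi(z)\lambda(g) e_f$ is $\sigma$-weakly dense in $e_f \cN e_f$; since each such element lies in $\mathfrak{n}_{\tilde{\varphi}_f}$ and $\mathfrak{n}_{\tilde{\varphi}_f}$ is a linear subspace, $\mathfrak{n}_{\tilde{\varphi}_f} \cap e_f \cN e_f$ is $\sigma$-weakly dense in $e_f \cN e_f$, establishing semi-finiteness.

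I expect the only genuinely delicate point to be the interchange of $\varphi$ with the integral in the displayed computation when $h = g \ast f$ fails to be compactly supported (as it will be, unless $f$ is). Lemma \ref{Lem=Fubini} is stated for compactly supported integrands, so I would justify the identity $\varphi(\int_G |h(s)|^2 \alpha_s^{-1}(z^\ast z)\, ds) = \Vert h \Vert_2^2 \varphi(z^\ast z)$ by exhausting $G$ by compact sets $K_n \nearrow G$, applying Lemma \ref{Lem=Fubini} on each $K_n$ to the positive integrand, and passing to the supremum using the normality of $\varphi$. The secondary care point is bookkeeping around the support projection, namely the repeated use of $e_f \lambda(f) = \lambda(f)$ and the one-sided (left-ideal) nature of $\mathfrak{n}_{\tilde{\varphi}}$, which is why the factor $\lambda(f)$ must be absorbed into $h = g \ast f$ rather than multiplied on after the fact.
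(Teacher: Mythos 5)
Your proof is correct, and it rests on the same key computation as the paper's proof --- Lemma \ref{Lem=HaagerupKrausComputation} (i.e.\ the formula \eqref{Eqn=Tf}) combined with the $\varphi$-invariance of $\alpha$ and a Fubini-type interchange --- but the density mechanism is genuinely different. The paper takes a single increasing net $\{x_i\}$ of positive elements of $\nphi$ with $x_i \nearrow 1$ strongly (obtained from \cite[Proposition II.3.3]{TakII} applied to $\mphi$), checks via the inequality $\pi(x_i)^\ast e_f \pi(x_i) \leq \pi(x_i^\ast x_i)$ and Lemma \ref{Lem=HaagerupKrausComputation} that $\tilde{\varphi}_f\bigl(e_f \pi(x_i)^\ast e_f \pi(x_i) e_f\bigr) \leq \Vert f \Vert_{L^2 G}^2\, \varphi(x_i^\ast x_i) < \infty$, and then concludes: the compressions $e_f \pi(x_i) e_f$ form a net in the left ideal $\mathfrak{n}_{\tilde{\varphi}_f} \cap e_f \cN e_f$ converging strongly to the unit $e_f$ of the corner, so the ideal is strongly dense. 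Because density is extracted from the left-ideal property (multiplying the net on the left by arbitrary elements of the corner) rather than from a spanning family, the paper never needs your convolution step: compressed elements of $\pi(\nphi)$ alone suffice. Your route instead builds the two-parameter family $e_f \pi(z)\lambda(g) e_f$ and is forced to introduce $h = g \ast f$ and the identity $\lambda(g)\lambda(f) = \lambda(g \ast f)$, precisely because compressions of $\pi(\nphi)$ by themselves do not span a $\sigma$-weakly dense subspace of the corner --- a point your bookkeeping implicitly gets right. Both arguments are sound; the paper's is shorter, while yours has two genuine merits: the clean reformulation $\mathfrak{n}_{\tilde{\varphi}_f} = \{x \in \cN : x\lambda(f) \in \mathfrak{n}_{\tilde{\varphi}}\}$, which makes the role of the support projection transparent, and the explicit justification of the interchange of $\varphi$ with the integral when the integrand $|h|^2\,\alpha_s^{-1}(z^\ast z)$ is not compactly supported, via exhaustion by compacta and normality --- an interchange the paper's proof (which applies Lemma \ref{Lem=HaagerupKrausComputation} with a general $f \in L^2(G)$) uses without comment, and for which Lemma \ref{Lem=Fubini} as stated does not literally apply.
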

\begin{proof}
Let $\{ x_i \}_{i \in I}$ be an increasing net of positive elements in $\nphi$ such that $x_i \nearrow 1$ strongly. Such a net exists by applying \cite[Proposition II.3.3]{TakII} to $\mphi$, which is contained in the left ideal $\nphi$. The map $\pi: \cM \rightarrow \cN$ is normal so that it preserves suprema and hence $\pi(x_i) \rightarrow 1$ strongly in $\cN$.  By Lemma \ref{Lem=HaagerupKrausComputation} we see that,
\[
\begin{split}
 & \tilde{\varphi}\left( \lambda(f)^\ast e_f   \pi(x_i)^\ast e_f \pi( x_i)  e_f \lambda(f) \right) \\
\leq &  \tilde{\varphi}\left( \lambda(f)^\ast    \pi(x_i)^\ast  \pi( x_i)    \lambda(f) \right)\\
=& \Vert   f \Vert_{L^2 G}^2 \:\:\varphi(x_i^\ast x_i) < \infty.
\end{split}
\]
This means that there exists a net in $e_f \: \mathfrak{n}_{\tilde{\varphi}_f} e_f$  converging strongly to $e_f$. Since,  $e_f \: \mathfrak{n}_{\tilde{\varphi}_f} e_f$ is a left ideal in  $e_f \cN e_f$ it is therefore strongly dense, which proves the lemma.
\end{proof}

}

\begin{prop}\label{Prop=CrossedProductEmbeddingUltra}
Let $\cM, \cN, \varphi, \tilde{\varphi}, \alpha, G$ be as in Subsection \ref{Subsection:crossprel}. Assume moreover that $\alpha$ is $\varphi$-preserving. Let $f \in L^2(G)$ with $\Vert f \Vert_{L^2(G)} = 1$ be such that $g \mapsto f \ast g, g \in L^2(G)$ extends to a bounded operator $\lambda(f)$ on $L^2(G)$. Assume that  $\lambda(f) \geq 0$. Let $e_f$ be the support projection of $\lambda(f)$ in $\cN$  and let $\tilde{\varphi}_f$ be the restriction of the weight
\begin{equation}\label{Eqn=WeightOnCorner}
y \mapsto \tilde{\varphi}(\lambda(f) y \lambda(f)), \qquad y \in \cN,
\end{equation}
to the corner algebra $e_f \cN e_f$ (we will slightly abuse the notation and denote by $\tilde{\varphi}_f$  also the weight on $\cN$ defined by the equality \eqref{Eqn=WeightOnCorner}, before the restriction). Then the following statements hold.
\begin{enumerate}
\item If $\varphi$ is faithful, then the weight $\tilde{\varphi}_f$ is faithful on $e_f \cN e_f$.
\item There exists a contractive map,
\[
U_f: L^2(\cM, \varphi) \rightarrow L^2(\cN, \tilde{\varphi}_f):\:\: \Lambda_{\varphi}(x) \mapsto   \Lambda_{\tilde{\varphi}_f} (e_f \pi(x) e_f).
\]
\end{enumerate}
\end{prop}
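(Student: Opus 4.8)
The plan is to treat the two assertions separately; the second rests on the explicit formula \eqref{Eqn=Tf} of Lemma \ref{Lem=HaagerupKrausComputation} combined with the Fubini-type Lemma \ref{Lem=Fubini}, while the first is a soft argument using faithfulness of the dual weight and the defining property of a support projection.

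For the faithfulness statement (1), I would argue directly. Since $\varphi$ is faithful, the dual weight $\tilde{\varphi}$ is faithful by Lemma \ref{Lem=CoreLemma}. Suppose $y \in (e_f \cN e_f)^+$ satisfies $\tilde{\varphi}_f(y) = \tilde{\varphi}(\lambda(f) y \lambda(f)) = 0$. As $\lambda(f) y \lambda(f) = (y^{1/2}\lambda(f))^\ast (y^{1/2}\lambda(f)) \geq 0$, faithfulness of $\tilde{\varphi}$ forces $y^{1/2}\lambda(f) = 0$, and taking adjoints (using $\lambda(f)^\ast = \lambda(f)$) gives $\lambda(f) y^{1/2} = 0$. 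Hence the range of $y^{1/2}$ lies in $\ker \lambda(f)$, on which the support projection $e_f$ vanishes, so $e_f y^{1/2} = 0$. But $y^{1/2} \in e_f \cN e_f$ means $e_f y^{1/2} = y^{1/2}$, whence $y^{1/2} = 0$ and $y = 0$.

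For the contraction (2), I would simultaneously verify that $e_f \pi(x) e_f \in \mathfrak{n}_{\tilde{\varphi}_f}$ for $x \in \nphi$ (so the right-hand side makes sense; note $\tilde{\varphi}_f$ is semi-finite on the corner by Lemma \ref{Lem=SemiFiniteCornerI}) and establish the norm bound. Writing $y = e_f \pi(x) e_f$ and using the support-projection identities $e_f \lambda(f) = \lambda(f) = \lambda(f) e_f$, one gets
\[
\|\Lambda_{\tilde{\varphi}_f}(y)\|_2^2 = \tilde{\varphi}_f(y^\ast y) = \tilde{\varphi}\bigl(\lambda(f)\, \pi(x)^\ast e_f \pi(x)\, \lambda(f)\bigr).
\]
Since $e_f \leq 1$ yields $\pi(x)^\ast e_f \pi(x) \leq \pi(x^\ast x)$, monotonicity of $\tilde{\varphi}$ bounds this by $\tilde{\varphi}(\lambda(f)\pi(x^\ast x)\lambda(f))$. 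Now $\tilde{\varphi} = \varphi \circ \pi^{-1} \circ T$ together with \eqref{Eqn=Tf} (using $\lambda(f)^\ast = \lambda(f)$) turns the latter into $\varphi\bigl(\int_G |f(s)|^2 \alpha_s^{-1}(x^\ast x)\, ds\bigr)$, and applying Lemma \ref{Lem=Fubini} with the $\varphi$-invariance of $\alpha$ collapses the integral to $\|f\|_{L^2(G)}^2\, \varphi(x^\ast x) = \varphi(x^\ast x) = \|\Lambda_\varphi(x)\|_2^2$. This shows both that $y \in \mathfrak{n}_{\tilde{\varphi}_f}$ and that $\Lambda_\varphi(x) \mapsto \Lambda_{\tilde{\varphi}_f}(e_f \pi(x) e_f)$ is contractive on the dense subspace $\Lambda_\varphi(\nphi) \subseteq L^2(\cM, \varphi)$, so it extends to the required contraction $U_f$.

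The point requiring care is the repeated use of the support-projection identity $\lambda(f) e_f = \lambda(f)$ to reduce the corner weight to the computable expression $\tilde{\varphi}(\lambda(f)\,\cdot\,\lambda(f))$, along with correct bookkeeping of the identification $\cM \cong \pi(\cM)$ implicit in $\tilde{\varphi} = \varphi \circ \pi^{-1} \circ T$. Once these are in place the whole statement reduces to the one-line estimate above, so I expect no serious obstacle beyond the handling of semi-finiteness on the corner, which is already supplied by Lemma \ref{Lem=SemiFiniteCornerI}.
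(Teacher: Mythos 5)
Your proposal is correct and matches the paper's own proof in essentials: part (1) is the same support-projection argument (the paper applies faithfulness of $\tilde{\varphi}$ to $y\lambda(f)$ for general $y$, you to $y^{1/2}\lambda(f)$ for positive $y$ — a cosmetic difference), and part (2) is exactly the paper's estimate via $\pi(x)^\ast e_f \pi(x) \leq \pi(x^\ast x)$, formula \eqref{Eqn=Tf}, Lemma \ref{Lem=Fubini} and $\varphi$-invariance of $\alpha$. Your explicit remarks on $e_f\pi(x)e_f \in \mathfrak{n}_{\tilde{\varphi}_f}$ and on the support identities $\lambda(f)e_f = e_f\lambda(f) = \lambda(f)$ only make precise what the paper leaves implicit.
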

\begin{proof}
{ Begin by observing that the weight $\tilde{\varphi}_f$ is faithful, provided that $\varphi$ is faithful. To this end assume that $y\in e_f \cN e_f$ is such that
$\tilde{\varphi}_f (y^\ast y) =0$. Then, by faithfulness of $\tilde{\varphi}$, $y\lambda(f) =0$. But then, as $e_f$ is the projection onto the closure of the image of $\lambda(f)$, we also have $y e_f = 0$. Finally $y \in e_f \cN e_f$, so $y=0$.}

We shall now prove that $U_f$ is contractive. For $x \in \nphi$, since $\pi(x)^\ast e_f \pi(x) \leq \pi(x^\ast x)$, using Lemma \ref{Lem=HaagerupKrausComputation} and Lemma \ref{Lem=Fubini} yields
\begin{equation}
\begin{split}
\tilde{\varphi}_f(e_f \pi(x)^\ast e_f \pi(x) e_f) = & \tilde{\varphi}_f(\pi(x)^\ast e_f \pi(x) )\\
 \leq& \tilde{\varphi}_f( \pi(x^\ast x)) \\
= & \varphi\circ \pi^{-1} \circ T ( \lambda(f)^\ast\pi( x^\ast x) \lambda(f) )\\
 = & \varphi\left( \int_{G} \vert f(s) \vert^2  \alpha_s^{-1}(x^\ast x )ds \right) \\
=& \varphi(x^\ast x).
\end{split}
\end{equation}
This implies that $\Vert  \Lambda_{\tilde{\varphi}_f} (e_f \pi(x) e_f) \Vert_2 \leq \Vert \Lambda_\varphi(x) \Vert_2$ and hence $U_f$ is contractive.

\end{proof}

We now prove the following preliminary theorem. We shall improve on it later, once we have established that the Haagerup property is independent of the choice of the weight.

\begin{thm}\label{Thm=HPCrossedProduct}
Let $\cM$ be a von Neumann algebra with normal, semi-finite, faithful weight $\varphi$. Let $G$ be a locally compact group. Let $\alpha: G \rightarrow \Aut(\cM)$ be a strongly continuous  group of automorphisms such that $\varphi \circ \alpha =  \varphi$. Let $\cN := \cM \rtimes_{\alpha} G$ be the crossed product with dual weight $\tilde{\varphi} := \varphi \circ \pi^{-1} \circ T$.  
Then the following hold.
\begin{enumerate}
\item\label{Item=HPCrossedProduct0} Consider a net of functions $\{ f_j \}_{j\in J}$ in $L^2(G)$ such that $g \mapsto f_j \ast g, g \in C_c(G)$ extends to a bounded positive operator $\lambda(f_j)$ on $L^2(G)$. Assume moreover that $\Vert f_j \Vert_{L^2 G} = 1$ and that for every $g \in C_c(G)$ we have $\int_G \vert f_j(s)\vert^2 g(s) ds \rightarrow g(e)$. Assume also that for every $j\in J$ the pair $(e_j \cN e_j, \tilde{\varphi}_j)$ has the Haagerup property with cp maps  $\{ \Phi_k^{(j)} \}_{k \in \mathbb{N}}$ having $L^2$-implementations $T_k^{(j)}$ that are uniformly bounded both in $j$ and $k$. Then also $(\cM, \varphi)$ has the Haagerup property. Here $\tilde{\varphi}_j := \tilde{\varphi}_{f_j}$ is the weight constructed in Proposition \ref{Prop=CrossedProductEmbeddingUltra} and $e_j := e_{f_j}$ is also defined there.
\item\label{Item=HPCrossedProduct1} Consider the special case that $G = \mathbb{R}$ and $\alpha = \sigma^\varphi$. If $(\cN, \tilde{\varphi})$ has the Haagerup property 
then $(\cM, \varphi)$ has the Haagerup property.
\item \label{Item=HPCrossedProduct2} Consider again the special case that $G = \mathbb{R}$ and $\alpha = \sigma^\varphi$. If $(\cM, \varphi)$  has the Haagerup property 
then $(\cN, \tilde{\varphi})$ has the Haagerup property.
\end{enumerate}
\end{thm}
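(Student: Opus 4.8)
The plan is to derive the Haagerup property of $(\cN,\tilde{\varphi})$ by transporting it from the larger but more tractable algebra $\cM\wot\cB(L^2\br)$, inside which $\cN$ sits as the fixed point algebra of $\beta=\sigma^{\varphi}\otimes\rho$, using for this transport the \emph{dual} ingredients recorded in Subsection \ref{Subsection:crossprel}. By Proposition \ref{Prop=HPBofH} and Lemma \ref{Lem=HPTensor} the pair $(\cM\wot\cB(L^2\br),\varphi\otimes{\rm Tr})$ has the Haagerup property; I fix witnessing maps of the product form $\Xi_k=\Phi_k\otimes\Psi_k$, where $\Psi_k(b)=P_kbP_k$ for finite rank projections $P_k$ onto nice (say Hermite) vectors, so that $\Xi_k$ is normal, completely positive, satisfies $(\varphi\otimes{\rm Tr})\circ\Xi_k\le\varphi\otimes{\rm Tr}$, and has compact $L^2$-implementation $\Theta_k\to1$. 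For $f\in L^2\br\cap L^\infty\br$ with $\Vert f\Vert_2=1$ and $\Vert f\Vert_\infty\le1$ I then set
\[
\tilde{\Phi}_{k,f}(y)=S\big(\Xi_k(\nu(f)^{\ast}\,y\,\nu(f))\big),\qquad y\in\cN .
\]
Here $\nu(f)=1_{\cM}\otimes f$ is the position-type localization dual to the convolution $\lambda(f)$, and the operator valued weight $S$ plays the role dual to $T$. Its range lies in $\cN$ because $S$ takes values in the $\beta$-fixed points, and $\tilde{\Phi}_{k,f}(y)$ is bounded since $\nu(f)^{\ast}y\nu(f)$, and hence (as $P_k$ has finite rank) also $\Xi_k(\nu(f)^{\ast}y\nu(f))$, lies in the domain of $S$; note in particular $S(\nu(\vert f\vert^2))=\int_{\br}\beta_s(\nu(\vert f\vert^2))\,ds=\Vert f\Vert_2^2\cdot 1=1$. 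Thus each $\tilde{\Phi}_{k,f}$ is a normal completely positive map $\cN\to\cN$.

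The weight inequality is where the placement of $\nu(f)$ \emph{inside} $\Xi_k$ pays off. Using the Takesaki-duality identity $\tilde{\varphi}\circ S=\varphi\otimes{\rm Tr}$ (the bidual weight is $\varphi\otimes{\rm Tr}$), the subinvariance of $\Xi_k$, the fact that $\nu(f)$ lies in the centralizer of $\varphi\otimes{\rm Tr}$ (as $\sigma_t^{\varphi\otimes{\rm Tr}}=\sigma_t^{\varphi}\otimes{\rm id}$ fixes $1_{\cM}\otimes f$), so that \eqref{Eqn=CornerWeightInequality} and cyclicity against centralizer elements apply, and the $\cN$-bimodularity of $S$, one computes for $y\in\cN^+$
\begin{align*}
\tilde{\varphi}(\tilde{\Phi}_{k,f}(y)) &=(\varphi\otimes{\rm Tr})\big(\Xi_k(\nu(f)^{\ast}y\nu(f))\big)\le(\varphi\otimes{\rm Tr})(\nu(f)^{\ast}y\nu(f))\\
&=(\varphi\otimes{\rm Tr})(y\,\nu(\vert f\vert^2))=\tilde{\varphi}\big(S(y\,\nu(\vert f\vert^2))\big)=\tilde{\varphi}(y),
\end{align*}
the last equalities using $S(y\,\nu(\vert f\vert^2))=y\,S(\nu(\vert f\vert^2))=y$. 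Hence $\tilde{\varphi}\circ\tilde{\Phi}_{k,f}\le\tilde{\varphi}$.

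For convergence to $1$ I would let $f$ run through the $L^2$-normalized Fej\'er-type family $f_n=(2n)^{-1/2}\mathbf{1}_{[-n,n]}$ (so $\Vert f_n\Vert_2=1$, $\Vert f_n\Vert_\infty\le1$). A direct computation in the spatial picture gives, for the limiting maps $\Omega_f(y)=S(\nu(f)^{\ast}\,y\,\nu(f))$, that $\Omega_f(\pi(x))=\pi(x)$ for all $x$, while $\Omega_f(\lambda(t))=c_f(t)\lambda(t)$ with $c_f(t)=\int_{\br}\overline{f(v)}f(v-t)\,dv$ the normalized autocorrelation of $f$; since $c_{f_n}(t)=1-\tfrac{\vert t\vert}{2n}\to1$, these are exactly the classical positive definite Fej\'er functions on $\br$ (reflecting the amenability of $\br$), and the $L^2$-implementations of $\Omega_{f_n}$ converge strongly to $1$. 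As $\Xi_k\to{\rm id}$ and $\Omega_{f_n}\to{\rm id}$, a diagonal argument over $(k,n)$ together with the uniform boundedness of the implementations and a $3\epsilon$-estimate (as in Lemma \ref{Lem=HPCorner}) produces a single sequence of maps whose $L^2$-implementations converge strongly to $1$.

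The step I expect to be the main obstacle is the compactness of the $L^2$-implementation of $\tilde{\Phi}_{k,f}$, and here lies the interaction between the $L^\infty$- and $L^2$-levels emphasized in the introduction. The multiplier $\Omega_f$ is never compact (it fixes $\pi(\cM)$), so compactness must be imported entirely from $\Xi_k$: one factors the implementation as $B_f\,\Theta_k\,A_f$, where $A_f$ (the $L^2$-map of the $\nu(f)$-compression) is a contraction and $B_f$ is induced by $S$. The difficulty is that $S$ is unbounded ($S(1)=\infty$), so $B_f$ is only densely defined; one must check that it is bounded on the image of the compact operator $\Theta_k$ — which is precisely where the finite rank of $P_k$ and the $S$-integrability of the $\nu(\vert f\vert^2)$-localized elements enter — so that $B_f\Theta_k A_f$ is genuinely compact. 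Controlling these domain and boundedness questions for $S$ while maintaining the simultaneous strong convergence of the implementations is the heart of the argument. Finally, since $\cN=\cM\rtimes_{\sigma^{\varphi}}\br$ is semi-finite, Theorem \ref{Thm=HPSemiFinite} renders the resulting property independent of the weight, which is exactly what turns this statement into the engine for weight independence in the general case.
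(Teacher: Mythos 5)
Your proposal addresses only item \eqref{Item=HPCrossedProduct2} of the theorem. Nothing is said about the general net criterion \eqref{Item=HPCrossedProduct0} or about the descent direction \eqref{Item=HPCrossedProduct1}, and in the paper these are the actual engine: both \eqref{Item=HPCrossedProduct1} and \eqref{Item=HPCrossedProduct2} are proved by reduction to \eqref{Item=HPCrossedProduct0} (for \eqref{Item=HPCrossedProduct1} via Gaussians $f_j$, whose $\lambda(f_j)$ are positive with full support, together with Theorem \ref{Thm=HPSemiFinite}; for \eqref{Item=HPCrossedProduct2} via the dual action $\theta$ on $\cN$). So as a proof of the stated theorem the proposal is incomplete even in outline, and in particular it does not deliver the direction $\cN \Rightarrow \cM$ that Theorem \ref{Thm=InvariantOfTheVNA} later needs.

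Within item \eqref{Item=HPCrossedProduct2}, your construction is in substance the paper's, viewed in the dual picture: the paper applies \eqref{Item=HPCrossedProduct0} to the inclusion $\cN \subseteq \cN \rtimes_\theta \br \simeq \cM \wot \cB(L^2\br)$ with $\lambda_\theta(f_j) = \nu(\hat{f}_j)$, $\hat{f}_j = \vert F_j \vert^{-1/2}\chi_{F_j}$, $F_j = [-j,j]$ --- exactly your Fej\'er family --- and your subinvariance computation (via $\tilde{\varphi}\circ S = \varphi \otimes {\rm Tr}$, the centralizer trick, and $S(\nu(\vert f \vert^2)) = 1$) as well as your autocorrelation computation $c_{f_n}(t) = 1 - \vert t \vert/(2n)$ are correct and match \eqref{Eqn=PhiSWeightIneq} and Lemma \ref{Lem=SFLStrongToOne}. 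But the step you flag as the main obstacle is a genuine gap, and you leave it open. The map $B_f$ induced by $S$ against $\varphi \otimes {\rm Tr}$ is indeed unbounded globally, and your plan to restore boundedness through the finite rank of $P_k$ is not carried out: it would require proving that $S(1 \otimes P_k)$ is bounded, and the resulting bounds depend on $k$, which then clashes with the uniform boundedness needed for the $3\epsilon$-argument in your diagonal step. The paper's fix is to compress by $e_j$ again \emph{after} applying the approximating map: since the image of $e_j \Phi_k(e_j \cdot e_j) e_j$ lies in the corner $e_j (\cM \wot \cB(L^2\br)) e_j$ and $S(e_j) = \vert F_j \vert \cdot 1$ is bounded, Kadison--Schwarz (Lemma \ref{Lem=KadisonSchwarz}) against the twisted corner weight $\tilde{\varphi}_j = \tilde{\tilde{\varphi}}(\lambda_\theta(f_j) \, \cdot \, \lambda_\theta(f_j))$ gives a \emph{contractive} map $A_j$, the embedding $U_j$ is contractive by Proposition \ref{Prop=CrossedProductEmbeddingUltra}, and the factorization \eqref{Eqn=IntertwiningProperty} then yields compact implementations with uniform norm bounds. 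Moreover --- and this is where your choice of $f_n$ is secretly essential --- because $\nu(\hat{f}_j)$ is a scalar multiple of the projection $e_j$, the corner weight $\tilde{\varphi}_j$ is proportional to the restriction of $\varphi \otimes {\rm Tr}$, so the Haagerup property of the corner follows by direct compression of the tensor-product maps, with no appeal to weight independence; that last point matters because weight independence is proved \emph{from} this theorem, and your closing remark comes close to using it circularly. Finally, your strong convergence argument is only verified on the generators $\pi(x)$ and $\lambda(t)$; pointwise multiplier convergence on generators does not by itself give strong $L^2$-convergence, and one must run the estimate on the core $\mu(B_\varphi)$ of $\Lambda_{\tilde{\varphi}}$ as in Lemma \ref{Lem=CoreLemma} and Lemma \ref{Lem=SFLStrongToOne}.
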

\begin{proof}
We start the proof by   showing how \eqref{Item=HPCrossedProduct1} and \eqref{Item=HPCrossedProduct2} relate back to the case \eqref{Item=HPCrossedProduct0}.

\vspace{0.3cm}

\eqref{Item=HPCrossedProduct1} We claim that the assumptions of \eqref{Item=HPCrossedProduct0} are automatically satisfied. Indeed one may take $f_j = \sqrt{\frac{j}{\pi}} e^{-\frac{1}{2} j t^2}$. Then $\lambda(f_j)$ is positive and bounded with support equal to 1  by the Fourier transform. It follows from Theorem \ref{Thm=HPSemiFinite} and its proof that $( \cN , \tilde{\varphi}_j)$ has the Haagerup property with uniform bounds on the $L^2$-implementations as in \eqref{Item=HPCrossedProduct0}. Thus from this point the proofs of  \eqref{Item=HPCrossedProduct0} and \eqref{Item=HPCrossedProduct1} proceed in exactly  the same way.

\vspace{0.3cm}

\eqref{Item=HPCrossedProduct2} We observe the following. Since $(\cM, \varphi)$ has the Haagerup property so has the pair given by $( \cM \wot \cB(L^2(\mathbb{R})), \varphi \otimes {\rm Tr})$, c.f.\  Lemma \ref{Lem=HPTensor} and Proposition \ref{Prop=HPBofH}. Since $\cM \wot \cB(L^2(\mathbb{R})) )$ is isomorphic to the crossed product $\cN \rtimes_\theta \mathbb{R}$ and in that case the double dual weight $\tilde{\tilde{\varphi}}$ corresponds to $\varphi \otimes {\rm Tr}$ (see \cite[Theorem X.2.3]{TakII}) it suffices to find functions $f_j$ such that \eqref{Item=HPCrossedProduct0} is satisfied for $\alpha = \theta$ and $G = \mathbb{R}$. We let $\hat{f}_j = \vert F_j \vert^{-\frac{1}{2}}  \chi_{F_j}$ where $F_j = [-j, j]$ (they are F\o{}lner sets) and let $f_j$ be its Fourier transform and note that $\Vert f_j \Vert_{L^2(\mathbb{R})} = 1$ by Plancherel's identity. Then $\lambda(f_j) = \nu( \hat{f}_j )$ (by \cite[p. 259 (13)]{TakII} and the Fourier transform). Let $e_j$ be the support projection of $\lambda(f_j)$ and note that in fact $e_j = \nu(\chi_{F_j})$, again by \cite[p.259 (13)]{TakII}. Let $( \cM \wot \cB(L^2(\mathbb{R})),  \varphi \otimes {\rm Tr})$ have the Haagerup property with approximating cp maps $\Phi_k$ and $L^2$-implementations $T_k$. Then $( e_j(\cM \wot \cB(L^2(\mathbb{R}))e_j,  \tilde{\varphi}_j)$ has the Haagerup property with approximating cp maps
\[
\nu(f_j)^{-1} \Phi_k(\nu(f_j) \: \cdot \:   \nu(f_j))\nu(f_j)^{-1}.
 \]
 Since $\nu(f_j)$ is a multiple of the the identity in the corner algebra $e_j(\cM \wot \cB(L^2(\mathbb{R}))e_j$ the latter map is just $e_j \Phi_k(e_j \: \cdot \: e_j)e_j$. The $L^2$-implementations of the latter map are bounded by $\sup_k \Vert T_k \Vert$ as $e_j$ is in the centralizer of $\tilde{\varphi}$. By the fact that $f_j(s) = \sqrt{j} f_1(js)$ it follows that $\int_G \vert f_j(s)\vert^2 g(s) ds \rightarrow g(e)$ for every $g \in C_c(G)$. Again the proof now proceeds exactly as in case \eqref{Item=HPCrossedProduct0}.

\vspace{0.3cm}

So from this point we shall assume \eqref{Item=HPCrossedProduct0}.

\noindent {\bf Step 0: Construction of completely positive maps.} Let for each $j \in J$ the sequence $\{ \Phi_k^{(j)} \}_{k \in \mathbb{N}}$ be the sequence of completely positive maps witnessing the Haagerup property of $(e_j \cN e_j, \tilde{\varphi}_j)$. That is, $\tilde{\varphi}_j\circ \Phi_k^{(j)} \leq \tilde{\varphi}_j$, the $L^2$-identification of the map $\Phi_k^{(j)}$, denoted by $T_k^{(j)}$, is compact and  $T_k^{(j)} \rightarrow 1$ strongly as $k \rightarrow \infty$. For each $k \in \bn$, $j \in J$ define a normal, completely positive map $\cM \rightarrow \cM$ by the formula
\[
\Psi_k^{(j)}(x) =  \pi^{-1} \circ  T\left( \lambda(f_j) \Phi_k^{(j)}(  e_j \pi(x) e_j ) \lambda(f_j) \right), \qquad x \in \cM.
\]

\noindent {\bf Step 1: Verifying the first criterium of Definition \ref{Dfn=HP}}.  We have the following estimate, in which we use respectively the definition of $\Psi_k^{(j)}$, the defining property of $\Phi_k^{(j)}$, Lemma \ref{Lem=HaagerupKrausComputation} and the assumption that $\varphi \circ \alpha =   \varphi$: for each $x \in \cM^+$
\begin{equation}\label{Eqn=StatePreserving}
\begin{split}
\varphi ( \Psi_k^{(j)}(x)  ) = &  \varphi\circ \pi^{-1} \left( T (  \lambda(f_j) \Phi_k^{(j)}(e_j \pi(x) e_j) \lambda(f_j)  ) \right)
\leq  \varphi \circ \pi^{-1}( T( \lambda(f_j)  e_j\pi( x) e_j  \lambda(f_j) ) )\\
 = &  \varphi\circ \pi^{-1}( T( \lambda(f_j)   \pi( x)    \lambda(f_j) ) )
=   \varphi \left( \int_{G} \vert f_j(s)\vert^2 \alpha^{-1}_s(x) ds \right)
  = \varphi(x).
\end{split}
\end{equation}
This shows that $\Psi_k^{(j)}$ satisfies Definition \ref{Dfn=HP} \ref{Item=HPOne} for $(\cM, \varphi)$, for every $j \in J$ and $k \in \mathbb{N}$.

\vspace{0.3cm}

\noindent {\bf Step 2: Verifying the second criterium of Definition \ref{Dfn=HP}}. We shall prove that each $\Psi_k^{(j)}$ also satisfies Property \ref{Item=HPTwo} of Definition \ref{Dfn=HP}. That is, we shall show that for each $j \in J$, $k \in \mathbb{N}$, the formula
\[
S_k^{(j)}: \Lambda_{\varphi}(x) \mapsto \Lambda_{\varphi}(\Psi_k^{(j)}(x)), \qquad \forall \: x \in \nphi,
\]
determines a compact operator on $L^2(\cM, \varphi)$, and the corresponding operator norms are uniformly bounded.
In order to do so, we need two auxiliary maps:
\begin{itemize}
\item Firstly, by Proposition \ref{Prop=CrossedProductEmbeddingUltra} there exists a contraction,
\[
U_j: L^2(\cM, \varphi) \rightarrow L^2(e_j \cN e_j, \tilde{\varphi}_j): \Lambda_{\varphi}(x) \mapsto \Lambda_{\tilde{\varphi}_j}(e_j \pi( x) e_j).
\]
\item Secondly, by Lemma \ref{Lem=KadisonSchwarz} and Lemma \ref{Lem=HaagerupKrausComputation} we may introduce a contractive map, using that $\Vert f_j \Vert_{L^2 G} = 1$,
\[
A_j: L^2(e_j \cN e_j, \tilde{\varphi}_j) \rightarrow L^2(\cM, \varphi): \Lambda_{\tilde{\varphi}_j}(x) \mapsto \Lambda_{\varphi}( \pi^{-1} \circ T(\lambda(f_j) x \lambda(f_j) ) ).
\]

\end{itemize}


Now, we claim that we have the following intertwining property:
\begin{equation}\label{Eqn=IntertwiningProperty}
S_{k}^{(j)} =  A_j T_k^{(j)} U_j, \;\;\; j \in J, k \in \mathbb{N}.
\end{equation}
Since $T_k^{(j)}$ is compact, this then implies that $S_k^{(j)}$ is compact. To prove the claim, let $x \in \nphi$. Then,
\[
\begin{split}
& A_j T_k^{(j)} U_j \Lambda_{\varphi}(x) \\
=&  A_j T_k^{(j)} \Lambda_{\tilde{\varphi}_j}(e_j \pi(x) e_j) \\
= &  A_j \Lambda_{\tilde{\varphi}_j}( \Phi_k^{(j)}(e_j \pi(x) e_j)) \\
=& \Lambda_{\varphi} (  \pi^{-1} \circ T(\lambda(f_j)\Phi_k^{(j)}(e_j \pi(x) e_j)  \lambda(f_j) )   ) \\
 = & \Lambda_{\varphi}( \Psi_k^{(j)}(x) ) \\
= & S_k^{(j)} \Lambda_{\varphi}(x).
\end{split}
\]

\vspace{0.3cm}

\noindent {\bf Step 3: Verifying the third criterium of Definition \ref{Dfn=HP}}. We now arrive at choosing $j \in J$ and $k   \in \mathbb{N}$ so that we construct a suitable approximating sequence of maps out of the maps $\Psi_k^{(j)}$. By separability of $\cM$, one may find an increasing sequence of finite subsets of $\nphi$ such that the union of the corresponding images in $L^2(\cM, \varphi)$ is dense.  The proof proceeds by choosing appropriate $j \in J$ and $k \in \mathbb{N}$ so that the resulting sequence $\Psi_k^{(j)}$ (indexed by finite subsets mentioned above) witnesses the Haagerup property for $(\cM, \varphi)$. For the rest of the proof we fix  a finite subset $F \subseteq \nphi$.

Firstly, we claim that there exists a $j \in J$ such that for every $x \in F$ we have,
\[
\Vert A_j \Lambda_{\tilde{\varphi}_j}(e_j \pi(x) e_j) - \Lambda_{\varphi}(x) \Vert_2 \leq \frac{1}{2 \vert F \vert}.
\]
Here $\vert F \vert$ is the number of elements in $F$. Indeed, using Lemma \ref{Lem=HaagerupKrausComputation},
\[
\begin{split}
& \Vert A_j \Lambda_{\tilde{\varphi}_j}( e_j \pi(x)  e_j) -    \Lambda_{\varphi}(x) \Vert_2 \\
= &\Vert  \Lambda_{\varphi}(\pi^{-1}\circ T(\lambda(f_j) \pi(x)  \lambda(f_j) ) ) - \Lambda_{\varphi}(x) \Vert_ 2 \\
= & \Vert   \int_{\mathbb{R}} \vert f_j(s) \vert^2  \Lambda_{\varphi}( \alpha_{s}^{-1}(x)    )ds  - \Lambda_{\varphi} (x) \Vert_ 2
\end{split}
\]
and the corresponding limit in $j$ tends to 0 by Lemma \ref{Lem=StronglyCTGroup}. Next, we may choose $k \in \mathbb{N}$ such that for every $x \in F$ we have
\[
\Vert T_k^{(j)} \Lambda_{\tilde{\varphi}_j}( e_j \pi(x)  e_j) - \Lambda_{\tilde{\varphi}_j}(e_j \pi(x)  e_j) \Vert_2 \leq \frac{1}{2 \vert F \vert},
\]
by definition of $T_k^{(j)}$. In that case, for every $x \in F$, we have,
\[
\begin{split}
& \Vert A_j T_k^{(j)} \Lambda_{\tilde{\varphi}_j}(e_j \pi(x) e_j) - \Lambda_{\varphi}(x) \Vert_2\\
 \leq &
\Vert A_j T_k^{(j)} \Lambda_{\tilde{\varphi}_j}(e_j \pi(x)  e_j) - A_j \Lambda_{\tilde{\varphi}_j}(e_j \pi(x)  e_j) \Vert_2 + \Vert A_j \Lambda_{\tilde{\varphi}_j}(e_j \pi(x)  e_j) -  \Lambda_{\varphi}( x) \Vert_2 \leq \frac{1}{\vert F \vert}.
\end{split}
\]
And hence, also for all $x \in F$,
\[
\Vert  A_j T_k^{(j)} U_j \Lambda_{\varphi}(x) - \Lambda_\varphi(x) \Vert \leq \frac{1}{\vert F\vert}.
\]
This ends the proof.
\end{proof}

\subsection{Main results of the section}

\begin{thm} \label{Thm=InvariantOfTheVNA}
Let $\cM$ be a  von Neumann algebra (recall with a separable predual) and let $\varphi$ and $\psi$ be two normal, semi-finite, faithful weights on $\cM$. Then, $(\cM, \varphi)$ has the Haagerup property if and only if $(\cM, \psi)$ has the Haagerup property. That is, the Haagerup property is an invariant of $\cM$.
\end{thm}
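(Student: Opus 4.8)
The plan is to reduce the general case to the semi-finite case already established in Theorem \ref{Thm=HPSemiFinite}, using the crossed product by the modular automorphism group as a bridge together with Theorem \ref{Thm=HPCrossedProduct}. First I would fix the two weights $\varphi$ and $\psi$ and form the crossed products $\cN_\varphi := \cM \rtimes_{\sigma^\varphi} \mathbb{R}$ and $\cN_\psi := \cM \rtimes_{\sigma^\psi} \mathbb{R}$, each equipped with its dual weight $\tilde{\varphi}$, respectively $\tilde{\psi}$. The two structural inputs I would invoke are: (a) each of these crossed products is semi-finite, the dual weight having an inner modular automorphism group, so that $\cN_\varphi$ and $\cN_\psi$ carry canonical normal, semi-finite, faithful traces; and (b), most importantly, $\cN_\varphi$ and $\cN_\psi$ are isomorphic as von Neumann algebras. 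Fact (b) is the statement that the continuous core of $\cM$ is independent of the chosen weight, and it follows from the Connes cocycle Radon--Nikodym theorem: the groups $\sigma^\varphi$ and $\sigma^\psi$ are cocycle conjugate via the unitary cocycle $(D\psi : D\varphi)_t$, and cocycle-conjugate actions of $\mathbb{R}$ produce isomorphic crossed products (see \cite[Chapter VIII]{TakII}).

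With these facts in hand the argument is a short chain of implications. Assume $(\cM, \varphi)$ has the Haagerup property. By Theorem \ref{Thm=HPCrossedProduct}\eqref{Item=HPCrossedProduct2} the pair $(\cN_\varphi, \tilde{\varphi})$ has the Haagerup property. Since $\cN_\varphi$ is semi-finite, Theorem \ref{Thm=HPSemiFinite} shows that the Haagerup property is an intrinsic invariant of $\cN_\varphi$, independent of which weight witnesses it. Transporting this along the isomorphism $\cN_\varphi \cong \cN_\psi$ (an isomorphism automatically preserves the Haagerup property, as it carries weights to weights) we conclude that $\cN_\psi$ has the Haagerup property, and hence, again by Theorem \ref{Thm=HPSemiFinite}, so does the pair $(\cN_\psi, \tilde{\psi})$. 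Finally Theorem \ref{Thm=HPCrossedProduct}\eqref{Item=HPCrossedProduct1}, applied to the weight $\psi$, yields that $(\cM, \psi)$ has the Haagerup property. By the symmetry between $\varphi$ and $\psi$ this gives the desired equivalence.

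The main obstacle is not the logical skeleton, which is remarkably short, but the justification of the two structural facts about $\cN_\varphi$: its semi-finiteness and, above all, the independence of its isomorphism class from $\varphi$. These are the cornerstones of the Takesaki--Connes structure theory for type III algebras, and I would invoke them with a precise reference rather than reprove them. A point worth emphasizing is that I deliberately do \emph{not} need the isomorphism $\cN_\varphi \cong \cN_\psi$ to intertwine the dual weights $\tilde{\varphi}$ and $\tilde{\psi}$: the weight-independence supplied by Theorem \ref{Thm=HPSemiFinite} for semi-finite algebras is exactly what frees me from tracking the weights through the isomorphism, and this is the reason the whole scheme closes up cleanly.
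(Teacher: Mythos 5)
Your proposal is correct and follows essentially the same route as the paper: pass to the core via Theorem \ref{Thm=HPCrossedProduct}\eqref{Item=HPCrossedProduct2}, use the weight-independence of the core's isomorphism class (the paper cites \cite[Proposition 3.5]{Tak73}, which is the cocycle-conjugacy fact you describe) together with its semi-finiteness and Theorem \ref{Thm=HPSemiFinite} to transfer the property to $(\cN_\psi, \tilde{\psi})$ without tracking dual weights, and descend via Theorem \ref{Thm=HPCrossedProduct}\eqref{Item=HPCrossedProduct1}. The only cosmetic difference is that the paper unfolds part \eqref{Item=HPCrossedProduct2} inline through $(\cM \wot \cB(L^2\mathbb{R}), \varphi \otimes {\rm Tr})$ and the dual action, whereas you cite it directly, which is legitimate.
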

\begin{proof}
Assume that $(\cM, \varphi)$ has the Haagerup property and let $\sigma^{\varphi}$ be the modular automorphism group of $\varphi$ and $\theta$ the dual action defined in \eqref{Eqn=DualAction}.  By crossed product duality \cite[Theorem X.2.3]{TakII}, there is a canonical isomorphism,
\[
 (\cM \rtimes_{\sigma^{\varphi}} \mathbb{R}) \rtimes_\theta \mathbb{R} \simeq \cM \wot \cB(L^2\mathbb{R}),
\]
such that  the second dual weight $\tilde{\tilde{\varphi}}$ corresponds to $\varphi \otimes {\rm Tr}$. Since $(\cM, \varphi)$ has the Haagerup property, it follows that $( \cM \wot \cB(L^2(\mathbb{R})), \varphi \otimes {\rm Tr})$ has the Haagerup property, c.f.\  Lemma \ref{Lem=HPTensor} and Proposition \ref{Prop=HPBofH}. Furthermore, it is clear from the definition that the dual action $\theta$ is a strongly continuous $\tilde{\varphi}$-preserving 1-parameter group of automorphisms of $\cM \rtimes_{\sigma^{\varphi}} \mathbb{R}$. Therefore, Theorem \ref{Thm=HPCrossedProduct} implies that $(\cM \rtimes_{\sigma^{\varphi}} \mathbb{R}, \tilde{\varphi})$ has the Haagerup property. By \cite[Proposition 3.5]{Tak73}, we have the following isomorphism of von Neumann algebras:
\[
\cM \rtimes_{\sigma^\varphi} \mathbb{R} \simeq \cM \rtimes_{\sigma^\psi} \mathbb{R}.
\]
The von Neumann algebra $\cM \rtimes_{\sigma^\varphi} \mathbb{R}$ is well-known to be semi-finite (this follows, as the modular automorphism group $t\mapsto \sigma_t^{\tilde{\varphi}}$ coincides with the 1-parameter group $t \mapsto {\rm Ad} \:\lambda(t)$ consisting of inner automorphisms, see the proof of \cite[Theorem X.1.1]{TakII}). Theorem \ref{Thm=HPSemiFinite} implies that $(\cM \rtimes_{\sigma^\psi} \mathbb{R}, \tilde{\psi})$ has the Haagerup property. Then, since $\sigma^\psi$ is a strongly continuous $\psi$-preserving 1-parameter group of automorphisms we may apply Theorem \ref{Thm=HPCrossedProduct} to see that $(\cM, \psi)$ has the Haagerup property.
 \end{proof}

\begin{rmk}
From this point we will simply say that a von Neumann algebra $\cM$ has {\it the} Haagerup property, without specifying the weight (or state) with respect to which this property holds. The independence of the weight allows us to improve on several preliminary results.
\end{rmk}

\begin{prop}\label{Lem=DescentDownToTensor}
Let $\cM$ and $\cN$ be (recall $\sigma$-finite) von Neumann algebras. If $\cM \wot \cN$ has the Haagerup property, then so do $\cM$ and $\cN$.
\end{prop}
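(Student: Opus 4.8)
The plan is to exploit the fact (Theorem \ref{Thm=InvariantOfTheVNA}) that the Haagerup property no longer depends on the chosen weight, which lets me work with a convenient product state, and then to realize $\cM$ as a ``slice'' of $\cM \wot \cN$. By symmetry it suffices to prove that $\cM$ has the Haagerup property. Since both factors have separable preduals, I fix faithful normal states $\varphi$ on $\cM$ and $\psi$ on $\cN$ and put $\tilde{\varphi} = \varphi \otimes \psi$, a faithful normal state on $\cM \wot \cN$. By Theorem \ref{Thm=InvariantOfTheVNA} the pair $(\cM \wot \cN, \tilde{\varphi})$ has the Haagerup property; let $\{\Phi_k\}_{k\in\bn}$ be witnessing completely positive maps, with compact $L^2$-implementations $T_k$ converging strongly to $1$.

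Next I would transport these maps down to $\cM$ using the inclusion $\iota\colon \cM \to \cM \wot \cN$, $\iota(x) = x \otimes 1$, together with the normal completely positive slice map $E = \id \otimes \psi$, viewed as a $\tilde{\varphi}$-preserving normal conditional expectation onto $\cM \otimes 1 \cong \cM$. One checks directly that $\varphi \circ E = \tilde{\varphi}$ and $E \circ \iota = \id_{\cM}$. Define $\Psi_k = E \circ \Phi_k \circ \iota \colon \cM \to \cM$; these are normal and completely positive, and for $x \in \cM^+$ one has $\varphi(\Psi_k(x)) = \tilde{\varphi}(\Phi_k(x \otimes 1)) \le \tilde{\varphi}(x \otimes 1) = \varphi(x)$, so property \ref{Item=HPOne} of Definition \ref{Dfn=HP} holds.

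The heart of the argument is the $L^2$-level bookkeeping. The inclusion $\iota$ induces an isometry $V\colon L^2(\cM,\varphi) \to L^2(\cM \wot \cN, \tilde{\varphi})$, $\Lambda_\varphi(x) \mapsto \Lambda_{\tilde{\varphi}}(x \otimes 1)$, which is isometric because $\tilde{\varphi}((x\otimes1)^\ast(x\otimes1)) = \varphi(x^\ast x)$; and a short computation using $\varphi \circ E = \tilde{\varphi}$ together with the bimodule property of $E$ shows that its adjoint acts by $V^\ast \Lambda_{\tilde{\varphi}}(z) = \Lambda_\varphi(E(z))$. Consequently the $L^2$-implementation $S_k$ of $\Psi_k$, which is bounded by Lemma \ref{Lem=KadisonSchwarz}, factors as $S_k = V^\ast T_k V$. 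Since $T_k$ is compact and $V$ is bounded, $S_k$ is compact; and since $T_k \to 1$ strongly while $V^\ast V = \id$, for every $\xi$ we get $S_k \xi = V^\ast T_k V \xi \to V^\ast V \xi = \xi$, so $S_k \to 1$ strongly. Thus $\{\Psi_k\}$ witnesses the Haagerup property of $(\cM,\varphi)$, and interchanging the roles of $\cM$ and $\cN$ finishes the proof.

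I expect the only genuinely delicate point to be the identification $S_k = V^\ast T_k V$, i.e.\ verifying that $V$ is well defined and isometric at the GNS level and that $V^\ast$ implements the slice map $E$; once this intertwining is in place, compactness and strong convergence are immediate transfers of the corresponding properties of $T_k$. The conceptual input is the reduction to a product state via weight independence, without which the slice-map/conditional-expectation structure underlying the factorization would not be available.
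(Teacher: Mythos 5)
Your proposal is correct and essentially identical to the paper's own proof: the paper also exploits weight independence to fix the product state $\varphi \otimes \psi$, slices with $(\varphi \otimes \id)$ (the mirror image of your $E = \id \otimes \psi$, since it treats the factor $\cN$ rather than $\cM$), and obtains the $L^2$-implementation as $U^\ast T_k U$ for the isometry $U \colon \Lambda_\psi(x) \mapsto \Lambda_\varphi(1_{\cM}) \otimes \Lambda_\psi(x)$ --- precisely your factorization $S_k = V^\ast T_k V$, from which compactness and strong convergence transfer in the same way. Your explicit verification that $V^\ast$ implements the slice map is done in the paper by the equivalent direct computation of $U^\ast T_k U$ on GNS vectors, so the difference is purely presentational.
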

\begin{proof}
Fix a faithful, normal state $\varphi$ on $\cM$ and a faithful, normal state $\psi$ on $\cN$.  Let $(\Phi_k)_{k \in \bn}$ be the sequence of maps that witnesses the Haagerup property for $(\cM \wot \cN, \varphi \otimes \psi)$. Let $T_k$ denote their $L^2$-implementations.  Then $\Psi_k := (\varphi \otimes \id )\circ \Phi_k$ defines the sequence of maps witnessing the Haagerup property for $\cN \simeq 1 \otimes \cN \subseteq \cM \wot \cN$ equipped with the state $\psi$. Indeed, trivially $\psi \circ \Psi_k \leq \psi$ so that condition \ref{Item=HPOne} of Definition \ref{Dfn=HP}  is satisfied. Consider the isometry
\[
U: L^2(\cN, \psi) \rightarrow L^2(\cM \wot \cN, \varphi \otimes \psi): \xi \mapsto \Lambda_{\varphi}(1_{\cM}) \otimes \xi.
\]
The $L^2$-implementation of $\Psi_k$ is then  given by $U^\ast T_k U$. Indeed, for $x \in \cN$,
\[
\begin{split}
U^\ast T_k U \Lambda_{\psi}(x) = & U^\ast T_k \Lambda_\varphi(1_{\cM}) \otimes \Lambda_\psi(x) = U^\ast (\Lambda_\varphi \otimes \Lambda_\psi)(\Phi_k(1_{\cM} \otimes x))\\
 = & \Lambda_{\varphi}( (\varphi \otimes \id)(\Phi_k(1_{\cM} \otimes x) ) ) = \Lambda_{\psi}(\Psi_k(x)).
\end{split}
\]
This yields properties \ref{Item=HPTwo} and \ref{Item=HPThree} of Definition \ref{Dfn=HP}.
\end{proof}

{
\begin{prop}\label{Prop=HPCornerNonModular}
Let $\cN$ be a von Neumann algebra and let $e \in \cN$ be a projection.  If $\cN$ has the Haagerup property, then so does $e \cN e$.
\end{prop}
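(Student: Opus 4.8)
The plan is to capitalize on the weight independence established in Theorem~\ref{Thm=InvariantOfTheVNA}. Since the Haagerup property of $\cN$ is now an intrinsic invariant, I am free to verify it using a weight adapted to the projection $e$. Concretely, I would produce a normal, semi-finite, faithful weight $\varphi$ on $\cN$ whose centralizer $\cN^{\varphi}$ contains $e$ and whose restriction to the corner $e\cN e$ is again normal, semi-finite and faithful. Granting such a $\varphi$, the statement follows at once from the forward implication of Lemma~\ref{Lem=HPCorner}.

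To construct $\varphi$, I would discard the trivial cases $e=0$ and $e=1$ and use that the corners inherit separable preduals: choose faithful normal states $\psi_1$ on $e\cN e$ and $\psi_2$ on $(1-e)\cN(1-e)$, and set
\[
\varphi(x) = \psi_1(exe) + \psi_2\big((1-e)x(1-e)\big), \qquad x \in \cN.
\]
This is a faithful normal state: if $\varphi(x^{\ast}x)=0$, then $\psi_1\big((xe)^{\ast}(xe)\big)=0$ and $\psi_2\big((x(1-e))^{\ast}(x(1-e))\big)=0$, so by faithfulness $xe=x(1-e)=0$ and hence $x=0$. Using $e^{2}=e$ and $e(1-e)=0$, one computes directly that $\varphi(ex)=\psi_1(exe)=\varphi(xe)$ for every $x\in\cN$, so $e$ commutes with $\varphi$ and therefore $e\in\cN^{\varphi}$. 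The same computation shows $\varphi(y)=\psi_1(y)$ for $y\in e\cN e$, so $\varphi$ restricts on the corner to the faithful normal state $\psi_1$, in particular to a normal, semi-finite, faithful weight.

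It then remains to assemble the pieces. By Theorem~\ref{Thm=InvariantOfTheVNA}, the hypothesis that $\cN$ has the Haagerup property is exactly the statement that $(\cN,\varphi)$ has it. I would now apply the forward implication of Lemma~\ref{Lem=HPCorner} to the single projection $e\in\cN^{\varphi}$ --- one may, for instance, feed the lemma the non-decreasing centralizer sequence $e,1,1,\dots$ with supremum $1$ and read off the term attached to $e$; note that the monotonicity $e_n\nearrow 1$ enters only in the converse implication, so this direction genuinely applies to $e$ alone. This yields that $(e\cN e,\psi_1)$ has the Haagerup property, and a final appeal to Theorem~\ref{Thm=InvariantOfTheVNA}, now for $e\cN e$, gives that $e\cN e$ has the Haagerup property.

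The single conceptual step here is the reduction, via weight independence, to a weight for which $e$ sits in the centralizer; once this is in place, Lemma~\ref{Lem=HPCorner} does all the work and no serious obstacle remains. The only points demanding a little care are the verification that the diagonal weight $\varphi$ has $e$ in its centralizer (handled cleanly by the identity $\varphi(ex)=\varphi(xe)$) and the observation that the forward half of Lemma~\ref{Lem=HPCorner} does not rely on the approximation $e_n\nearrow 1$.
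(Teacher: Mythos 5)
Your proof is correct and takes essentially the same route as the paper: the paper also manufactures a normal, semi-finite, faithful weight $\rho = \varphi + \psi$ from two pieces supported on $e$ and $1-e$ (your compressed corner states are just a special case of this), notes that $e$ lies in the centralizer of $\rho$, and concludes via the forward implication of Lemma~\ref{Lem=HPCorner} together with weight independence. The details you spell out --- that $\varphi(ex)=\varphi(xe)$ for all $x$ places $e$ in $\cN^{\varphi}$, and that the monotone approximation $e_n \nearrow 1$ is only needed for the converse direction of the lemma --- are exactly the points the paper's shorter proof leaves implicit.
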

\begin{proof}
Let $\varphi$ be a normal, semi-finite, faithful weight on $\cN$ with support equal to $e$. Let $\psi$ be a normal, semi-finite weight on $\cN$ with support equal to $1-e$. Set $\rho = \varphi + \psi$. Then $\rho$ is a normal, semi-finite, faithful weight on $\cN$ and $e$ is contained in the centralizer of $\rho$. The lemma follows then from Lemma \ref{Lem=HPCorner}.
\end{proof}
}

It is important to note that it follows directly from the constructions in our proofs that not only $\cM$ has the Haagerup property if and only if its core $\cM \rtimes_\sigma \mathbb{R}$ does so; moreover the bounds on the cp approximating maps ($\Phi_k$ of Definition \ref{Dfn=HP}) do not increase. Since for a semi-finite von Neumann algebra these approximating maps may in fact be chosen contractive, this holds in fact for any von Neumann algebra (see \cite{CasSkaII} for a similar argument). In particular, the assumption \eqref{Item=HPCrossedProduct0} on the bounds of $T_k^{(j)}$ is redundant. This allows us to conclude the following.

\begin{thm}\label{Prop=HPAllGroups}
Consider $G, \alpha, \cM, \cN, \varphi, \tilde{\varphi}$  arbitrary as in the statement of Theorem \ref{Thm=HPCrossedProduct}. If $(\cN, \tilde{\varphi})$ has the Haagerup property then so does $(\cM, \varphi)$.
\end{thm}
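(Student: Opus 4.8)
The plan is to deduce the statement from part~\eqref{Item=HPCrossedProduct0} of Theorem~\ref{Thm=HPCrossedProduct}. Now that weight independence (Theorem~\ref{Thm=InvariantOfTheVNA}) is available, together with the fact recorded in the paragraph just above that the approximating maps witnessing the Haagerup property of any von Neumann algebra can be taken contractive, the only two things left to supply are the two standing hypotheses of \eqref{Item=HPCrossedProduct0}: a suitable net $\{f_j\}_j$ for an arbitrary locally compact group $G$, and the uniform boundedness (in both $j$ and $k$) of the $L^2$-implementations $T_k^{(j)}$. I would establish both of these and then quote \eqref{Item=HPCrossedProduct0} verbatim.

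First I would construct the net $\{f_j\}$. Fix a decreasing neighbourhood basis $(V_j)_{j \in \bn}$ of the unit $e \in G$ and, using continuity of the group operations, choose for each $j$ a neighbourhood $U_j$ of $e$ with $U_j^{-1} U_j \subseteq V_j$. Pick $0 \neq h_j \in C_c(G)$ with $h_j \geq 0$ and $\mathrm{supp}\, h_j \subseteq U_j$, put $k_j = h_j^\ast \ast h_j$ and $f_j = k_j / \Vert k_j \Vert_{L^2(G)}$. Then $\lambda(f_j) = \Vert k_j \Vert_{L^2(G)}^{-1}\, \lambda(h_j)^\ast \lambda(h_j) \geq 0$ is bounded, $\Vert f_j \Vert_{L^2(G)} = 1$, and $f_j$ is supported in $U_j^{-1} U_j \subseteq V_j$. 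Consequently, for every $g \in C_c(G)$, using $\int_G \vert f_j \vert^2 = 1$ and continuity of $g$ at $e$,
\[
\left\vert \int_G \vert f_j(s) \vert^2 g(s)\, ds - g(e) \right\vert = \left\vert \int_G \vert f_j(s) \vert^2 (g(s) - g(e))\, ds \right\vert \leq \sup_{s \in V_j} \vert g(s) - g(e) \vert \stackrel{j \to \infty}{\longrightarrow} 0,
\]
so $\{f_j\}$ meets all the requirements of \eqref{Item=HPCrossedProduct0}.

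Next I would verify the Haagerup property of the corners with the required uniform bound. For each $j$ let $e_j$ be the support projection of $\lambda(f_j)$ and let $\tilde{\varphi}_j$ be the weight of Proposition~\ref{Prop=CrossedProductEmbeddingUltra}. Since $(\cN, \tilde{\varphi})$ has the Haagerup property, Proposition~\ref{Prop=HPCornerNonModular} shows that the corner $e_j \cN e_j$ has the Haagerup property, and by Theorem~\ref{Thm=InvariantOfTheVNA} it has it with respect to $\tilde{\varphi}_j$ in particular. Invoking the contractivity statement recorded above the theorem, for each $j$ the pair $(e_j \cN e_j, \tilde{\varphi}_j)$ admits approximating maps $\{\Phi_k^{(j)}\}_k$ that are contractive; by Lemma~\ref{Lem=KadisonSchwarz} their $L^2$-implementations then satisfy $\Vert T_k^{(j)} \Vert \leq \Vert \Phi_k^{(j)}(1) \Vert^{1/2} \leq 1$, giving the bound uniformly in $j$ and $k$. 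With both hypotheses in hand, Theorem~\ref{Thm=HPCrossedProduct}\eqref{Item=HPCrossedProduct0} yields that $(\cM, \varphi)$ has the Haagerup property.

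I do not expect any long computation here; the real point is to locate where the earlier results enter. The technical bound on $\Vert T_k^{(j)} \Vert$ in \eqref{Item=HPCrossedProduct0} was genuinely needed before weight independence was known, but it becomes automatic once one may freely renormalise the approximating maps on each corner to be contractive. Everything else reduces to the elementary construction of the approximate identity $\{f_j\}$ with $\lambda(f_j) \geq 0$, for which the factorisation $k_j = h_j^\ast \ast h_j$ is the only device required.
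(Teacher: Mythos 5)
Your proof is correct and takes essentially the same route as the paper's: corners $e_j \cN e_j$ inherit the Haagerup property via Proposition \ref{Prop=HPCornerNonModular}, weight independence (Theorem \ref{Thm=InvariantOfTheVNA}) transfers it to the specific weights $\tilde{\varphi}_j$, the contractive-renormalisation remark preceding the theorem together with Lemma \ref{Lem=KadisonSchwarz} gives the uniform bound $\Vert T_k^{(j)} \Vert \leq 1$, and Theorem \ref{Thm=HPCrossedProduct} \eqref{Item=HPCrossedProduct0} then concludes. Your explicit construction $f_j \propto h_j^\ast \ast h_j$ with supports shrinking to $e$ simply supplies a detail the paper leaves implicit when it asserts that the hypotheses of \eqref{Item=HPCrossedProduct0} are satisfied for an arbitrary locally compact $G$.
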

\begin{proof}
We use the notation of Theorem \ref{Thm=HPCrossedProduct}. Suppose that $\cN$ has the Haagerup property. It follows then from Proposition \ref{Prop=HPCornerNonModular} that for {\it any} $f_l \in C_c(G)$ such that $\Vert f_l\Vert_{L^2 G}=1$ and $\lambda(f_l)\geq 0$, the von Neumann algebra $e_l \cN e_l$, where $e_l$ is the support projection of $\lambda(f_l)$, has the Haagerup property. So if $\cN$ has the Haagerup property, then the conditions of Theorem \ref{Thm=HPCrossedProduct} \eqref{Item=HPCrossedProduct0} are satisfied (the assumption on the bounds of $T_k^{(j)}$ follow from the considerations preceding this theorem) and hence $\cM$ has the Haagerup property.
\end{proof}

\section{From the von Neumann algebra to the crossed product}\label{Sect=ToCrossedProduct}

In this section we present the converse of our results in Section \ref{Sect=TypeIII} for amenable groups: we will prove that if an amenable group $G$ acts on a von Neumann algebra $\cM$ that has the Haagerup property, then the crossed product $\cN = \cM \rtimes G$ has the Haagerup property. Our proofs rely on similar techniques as those in Section \ref{Sect=TypeIII}. However, since the situation here is (Pontrjagin) dual to Section \ref{Sect=TypeIII}, some of our arguments change.

We use again the notation of the Subsection \ref{Subsection:crossprel}.

\vspace{0.3cm}

We start with the following lemma. Part of the statement was already proved in \cite[Lemma 3.9]{HaaKrau}.

\begin{lem}\label{Lem=HaagerupKrausDual}
Let $x \in C_c(G, \cM)$ and let $f \in C_c(G)$. Define, for each $s,t \in G$, the element $a_t(s) = f(s^{-1}t) x(s)\in \cM$ and set
\[
S_f(y) = S( \nu(\overline{f}) y \nu(f) ), \qquad y \in \cM \wot \cB(L^2 G).
\]
Then, $S_f$ is a normal completely positive map from $\cM \wot\cB(L^2G)$ to $\cN$. Moreover, if $\chi_F$ denotes the characteristic function of a measurable set $F \subseteq G$, then
\begin{equation}\label{Eqn=SfExpression}
S_f( \mu(x^\sharp) \nu( \chi_F ) \mu(x) ) = \int_F \mu(a_t^\sharp \ast a_t ) dt.
\end{equation}
This implies in particular that if $\varphi$ is an $\alpha$-preserving normal, semi-finite, faithful weight on $\cM$ with dual weight $\tilde{\varphi}$ on $\cN$,  then,
\begin{equation}\label{Eqn=VarphiSfExpression}
\tilde{\varphi}\circ S_f( \mu(x^\sharp) \nu( \chi_F ) \mu(x) ) =    \varphi\left( \int_G \int_F \vert f(s^{-1}t)  \vert^2 dt\:  x(s)^\ast x(s)     ds  \right)
\end{equation}
Moreover for every $y \in \mathfrak{n}_{\tilde{\varphi}}$ we have
\begin{equation}\label{Eqn=PhiSWeightIneq}
\tilde{\varphi}\circ S_f( y^\ast y ) = \Vert f \Vert_{L^2G}^2 \:\tilde{\varphi}( y^\ast y ),
\end{equation}
so that in particular $\tilde{\varphi} \circ S_f|_{\cN} \leq \Vert f \Vert_{L^2 G}^2 \tilde{\varphi}$.
\end{lem}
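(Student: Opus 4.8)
The plan is to establish the four assertions in the order listed, citing \cite[Lemma 3.9]{HaaKrau} where the statement overlaps with the work of Haagerup and Kraus and concentrating the real effort on the two weight identities \eqref{Eqn=VarphiSfExpression} and \eqref{Eqn=PhiSWeightIneq}. First I would record that $S_f$ is normal, completely positive, and takes values in $\cN$. Normality and complete positivity are clear, since $z\mapsto\nu(\overline{f})z\nu(f)=\nu(f)^\ast z\nu(f)$ is normal and completely positive and $S$, being a normal operator valued weight, preserves these properties; the only point is that $S_f$ lands in $\cN$ and not merely in $\cN^+_{\rm ext}$. For this it suffices to see $\nu(f)\in\mathfrak{n}_S$, and indeed, using \eqref{Eqn=BetaNu} (so $\beta_s(\nu(g))=\nu(g_s)$ with $g_s(u)=g(us)$) together with left invariance of the Haar measure, $S(\nu(\vert f\vert^2))=\int_G\nu((\vert f\vert^2)_s)\,ds$ is multiplication by the constant function $\Vert f\Vert_{L^2G}^2$, that is $S(\nu(\vert f\vert^2))=\Vert f\Vert_{L^2G}^2\,1\in\cN$. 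Then $\nu(f)^\ast z\nu(f)\le\Vert z\Vert\,\nu(\vert f\vert^2)$ for $z\ge0$ forces $S_f(z)\in\cN^+$ by monotonicity. The identity \eqref{Eqn=SfExpression} is essentially \cite[Lemma 3.9]{HaaKrau}; it is obtained by writing $S_f(\,\cdot\,)=\int_G\beta_s\!\big(\nu(\overline{f})\,\cdot\,\nu(f)\big)\,ds$, using that $\mu(x^\sharp),\mu(x)\in\cN$ are $\beta$-fixed while $\beta_s(\nu(g))=\nu(g_s)$, and reorganizing the resulting integral through the convolution product, keeping track of the modular function exactly as in \cite{HaaKrau}.

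To pass from \eqref{Eqn=SfExpression} to \eqref{Eqn=VarphiSfExpression} I would apply $\tilde{\varphi}$ term by term. The key elementary identity is that for $w\in C_c(G,\cM)$ with values in $\nphi$, combining $\tilde{\varphi}=\varphi\circ\pi^{-1}\circ T$ with \eqref{Eqn=TWeight} gives
\[
\tilde{\varphi}(\mu(w^\sharp\ast w))=\varphi((w^\sharp\ast w)(e))=\varphi\Big(\int_G w(s)^\ast w(s)\,ds\Big),
\]
the last step being the standard computation $(w^\sharp\ast w)(e)=\int_G\Delta_G(r)^{-1}w(r^{-1})^\ast w(r^{-1})\,dr=\int_G w(s)^\ast w(s)\,ds$. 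Applying this with $w=a_t$, where $a_t(s)=f(s^{-1}t)x(s)$, yields $\tilde{\varphi}(\mu(a_t^\sharp\ast a_t))=\varphi(\int_G\vert f(s^{-1}t)\vert^2 x(s)^\ast x(s)\,ds)$. Integrating over $t\in F$ and interchanging $\int_F dt$ with $\varphi$ by the Fubini Lemma \ref{Lem=Fubini} then gives \eqref{Eqn=VarphiSfExpression}.

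Next, for \eqref{Eqn=PhiSWeightIneq} I would specialise \eqref{Eqn=VarphiSfExpression} to $F=G$, so that $\nu(\chi_G)=1$ and $\mu(x^\sharp)\nu(\chi_G)\mu(x)=\mu(x^\sharp\ast x)=\mu(x)^\ast\mu(x)$. Since $\int_G\vert f(s^{-1}t)\vert^2\,dt=\Vert f\Vert_{L^2G}^2$ for every $s$ by left invariance of the Haar measure, \eqref{Eqn=VarphiSfExpression} collapses to
\[
\tilde{\varphi}\circ S_f(\mu(x)^\ast\mu(x))=\Vert f\Vert_{L^2G}^2\,\varphi\Big(\int_G x(s)^\ast x(s)\,ds\Big)=\Vert f\Vert_{L^2G}^2\,\tilde{\varphi}(\mu(x)^\ast\mu(x)),
\]
using the displayed identity of the previous paragraph once more in reverse. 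This is \eqref{Eqn=PhiSWeightIneq} for the elements $y=\mu(x)$ with $x\in B_\varphi$.

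The remaining and main point is to upgrade this equality from these core elements to arbitrary $y\in\mathfrak{n}_{\tilde{\varphi}}$. Here $\tilde{\varphi}\circ S_f$ is itself a normal weight on $\cN$, so it has a $\sigma$-weak/norm closed GNS-map $\Lambda_{\tilde{\varphi}\circ S_f}$. On the core the equality just obtained reads $\Vert\Lambda_{\tilde{\varphi}\circ S_f}(\mu(x))\Vert_2=\Vert f\Vert_{L^2G}\,\Vert\Lambda_{\tilde{\varphi}}(\mu(x))\Vert_2$, so $\Lambda_{\tilde{\varphi}}(\mu(x))\mapsto\Lambda_{\tilde{\varphi}\circ S_f}(\mu(x))$ is $\Vert f\Vert_{L^2G}$ times an isometry; since $\mu(B_\varphi)$ is a $\sigma$-weak/norm core for $\Lambda_{\tilde{\varphi}}$ by Lemma \ref{Lem=CoreLemma}, this extends to a bounded operator $R:L^2(\cN,\tilde{\varphi})\to L^2(\cN,\tilde{\varphi}\circ S_f)$. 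For $y\in\mathfrak{n}_{\tilde{\varphi}}$ I would pick a net $b_i\in B_\varphi$ with $\mu(b_i)\to y$ $\sigma$-weakly and $\Lambda_{\tilde{\varphi}}(\mu(b_i))\to\Lambda_{\tilde{\varphi}}(y)$ in norm; then $\Lambda_{\tilde{\varphi}\circ S_f}(\mu(b_i))=R\Lambda_{\tilde{\varphi}}(\mu(b_i))\to R\Lambda_{\tilde{\varphi}}(y)$, and closedness of $\Lambda_{\tilde{\varphi}\circ S_f}$ forces $y\in\mathfrak{n}_{\tilde{\varphi}\circ S_f}$ with $\Lambda_{\tilde{\varphi}\circ S_f}(y)=R\Lambda_{\tilde{\varphi}}(y)$. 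Taking norms gives exactly \eqref{Eqn=PhiSWeightIneq}. Finally $\tilde{\varphi}\circ S_f|_{\cN}\le\Vert f\Vert_{L^2G}^2\,\tilde{\varphi}$ follows, since for $z\in\mathfrak{m}_{\tilde{\varphi}}^+$ one writes $z=(z^{1/2})^\ast z^{1/2}$ with $z^{1/2}\in\mathfrak{n}_{\tilde{\varphi}}$ and applies the equality, while for $z$ with $\tilde{\varphi}(z)=\infty$ the inequality is trivial. I expect the genuine difficulty to lie precisely in this last extension: keeping careful track of which elements belong to the various $\mathfrak{n}$'s and justifying the $\sigma$-weak integral manipulations, with the closedness of the GNS-maps being the tool that makes the passage from the core rigorous.
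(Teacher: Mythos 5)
Your proposal is correct and follows essentially the same route as the paper's proof: the same expansion of $S_f$ via the $\beta$-integral to obtain \eqref{Eqn=SfExpression}, the same application of \eqref{Eqn=TWeight} and the Fubini Lemma \ref{Lem=Fubini} for \eqref{Eqn=VarphiSfExpression}, and the identical core-plus-closedness argument (via Lemma \ref{Lem=CoreLemma} and the $\sigma$-weak/norm closedness of $\Lambda_{\tilde{\varphi}\circ S_f}$) to extend \eqref{Eqn=PhiSWeightIneq} from $\mu(B_\varphi)$ to all of $\mathfrak{n}_{\tilde{\varphi}}$. Your only departure is cosmetic and sound: you verify directly that $S_f$ lands in $\cN$ via $S(\nu(\vert f\vert^2))=\Vert f\Vert_{L^2G}^2\,1$, where the paper simply cites \cite[Lemma 3.9]{HaaKrau}.
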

\begin{proof}
We may assume that $f \neq 0$, as otherwise the claims of the lemma are trivially satisfied. The fact that $S_f$ defines a normal, completely positive map was proved in \cite[Lemma 3.9]{HaaKrau}. Thus we only need to prove \eqref{Eqn=SfExpression} and \eqref{Eqn=VarphiSfExpression} (the last two statements in the lemma follow then easily by putting $F=G$). Firstly, we have the following series of equations (note we use the fact that images of the maps $\pi$ and $\nu$ commute):
\begin{equation}\label{Eqn=InitialSfComp}
\begin{split}
& S_f( \mu( x^\sharp) \nu(\chi_F) \mu( x))\\
= & \int_G \beta_t \left( \nu(\overline{f}) \mu(x^\sharp) \nu(\chi_F) \mu( x) \nu(f) \right) dt \\
= & \int_G \nu(\overline{f_t}) \mu(x^\sharp)\nu(\chi_{Ft^{-1}}) \mu( x) \nu(f_t) dt \\
= & \int_G \nu(\overline{f_t} ) \left( \int_G \lambda(s) \pi(x(s)) ds \right)^\ast \nu(\chi_{Ft^{-1}}) \int_G \lambda(s) \pi( x(s) ) ds \:\: \nu(f_t) dt \\
= &  \int_G\int_G\int_G \nu(\overline{f_t} )  \pi(x(r))^\ast \lambda(r^{-1}) \nu(\chi_{Ft^{-1}})  \lambda(s) \pi( x(s) )   \:\: \nu(f_t) dt \: ds \:dr \\
= &  \int_G\int_G\int_G  \pi(x(r))^\ast \lambda(r^{-1}) \nu(\overline{_{r^{-1}}f_t} ) \nu(\chi_{Ft^{-1}}) \nu( _{s^{-1}}f_t  )    \lambda(s) \pi( x(s) )   \:\:  dt \: ds \:dr \\
\end{split}
\end{equation}
Next, we see that for $g \in G$,
\[
\int_G   \overline{_{r^{-1}}f_t}(g) \chi_{Ft^{-1}}(g) \: _{s^{-1}}f_t(g) dt = \int_G \overline{f(r^{-1}g t)}  \chi_F(gt) f(s^{-1} g t) dt = \int_F   \overline{f_t(r^{-1})}   f_t(s^{-1}) dt.
\]
This allows us to continue the computation \eqref{Eqn=InitialSfComp}:
\begin{equation}\label{Eqn=JustSomeComputation}
\begin{split}
& S_f( \mu( x^\sharp) \nu(\chi_F) \mu( x)) \\
= &  \int_G\int_G\int_F  \pi(x(r))^\ast \lambda(r^{-1}) \overline{f_t(r^{-1}) }   f_t(s^{-1})       \lambda(s) \pi( x(s) )   \:\:  dt \: ds \:dr \\
= & \int_F \mu(a_t^\sharp) \mu(a_t) dt
=  \int_F \mu(a_t^\sharp \ast a_t) dt.
\end{split}
\end{equation}
This proves \eqref{Eqn=SfExpression}. Next we choose a measurable $F\subseteq G$ and compute
\[
\begin{split}
& \tilde{\varphi} \circ S_f ( \mu( x^\sharp) \nu(\chi_F) \mu( x)) \\
= & \tilde{\varphi}\left( \int_F \mu( a_t^\sharp \ast a_t  ) dt \right) \\
= & \int_F \tilde{\varphi}( \mu( a_t^\sharp \ast a_t  ) ) dt \\
= & \int_F \varphi(   (a_t^\sharp \ast a_t)(e)     ) dt \\
= & \int_F \varphi\left( \int_G \vert f_t(s^{-1})\vert^2   x(s)^\ast x(s)      ds  \right) dt \\
= &  \varphi\left( \int_G \int_F \vert f(s^{-1}t)  \vert^2 dt \:  x(s)^\ast x(s)    ds  \right).
\end{split}
\]
As suggested above, if $F = G$  the above expression reduces to
\[
\begin{split}
\tilde{\varphi} \circ S_f ( \mu( x^\sharp) \mu( x)) = &  \Vert f \Vert_{L^2G}^2   \:  \varphi\left( \int_G x(s)^\ast x(s)     ds  \right)  \\
= & \Vert f \Vert_{L^2G}^2 \:\tilde{\varphi}(\mu(x^\sharp \ast x) ).
\end{split}
\]
Thus we obtain an isometric map $\Lambda_{\tilde{\varphi}}(\mu(x)) \mapsto \Vert f \Vert_{L^2 G}^{-1} \Lambda_{\tilde{\varphi} \circ S_f}(\mu(x))$ defined for $x \in B_{\varphi}$. By Lemma \ref{Lem=CoreLemma} this extends to an isometry from $L^2(\cN, \tilde{\varphi}) \rightarrow L^2(\cN, \tilde{\varphi} \circ S_f)$.  Lemma \ref{Lem=CoreLemma} implies also that for each $y \in \mathfrak{n}_{\tilde{\varphi}}$ we may find a net $\{ x_i \}$ in $B_\varphi$ such that $\mu(x_i)$ converges $\sigma$-weakly to $y$ and  $\Lambda_{\tilde{\varphi}}(\mu(x_i))$ converges in norm to $\Lambda_{\tilde{\varphi}}(y)$. This implies that $\Lambda_{\tilde{\varphi}\circ S_f}(\mu(x_i))$ is a Cauchy net and since  $\Lambda_{\tilde{\varphi}\circ S_f}$ is $\sigma$-weak/norm closed, this means that also $y \in \mathfrak{n}_{\tilde{\varphi} \circ S_f}$ and $\tilde{\varphi}\circ S_f(y^\ast y) = \Vert f \Vert_{L^2 G}^2 \tilde{\varphi}(y^\ast y)$.

\end{proof}

Note that we are not claiming that the weights $\tilde{\varphi} \circ S_f$ and $\Vert f \Vert^2_{L^2 G} \tilde{\varphi}$  coincide, but only that their values coincide on $\mathfrak{m}_{\tilde{\varphi}}$, i.e. the linear span of $\mathfrak{n}_{\tilde{\varphi}}^\ast \mathfrak{n}_{\tilde{\varphi}}$.

\begin{dfn}
An approximating sequence in $C_c(G)$ is a sequence $(g_n)_{n=1}^{\infty}$ of non-negative elements in $C_c(G)$ such that for each $n \in \bn$ we have $\|g_n\|_1=1$ and
for each $s \in G$ we have
\[ \lim_{n\to \infty} \|\delta_s \ast g_n - g_n\|_1 = 0.\]
\end{dfn}

It is well-known (see for example \cite[Proposition 0.8]{Pat}) that $G$ is amenable if and only if $G$ admits an approximating sequence in $C_c(G)$.

{

The proof of the following Lemma \ref{Lem=SemiFiniteCornerII} is almost exactly the same as Lemma \ref{Lem=SemiFiniteCornerI}. The difference is mainly that it involves an extra degree of crossed product duality.

\begin{lem} \label{Lem=SemiFiniteCornerII}
Let $f \in C_c(G)$ with support $F \subseteq G$. Let $e = \nu(\chi_F)$ be the support projection of $\nu(f)$. The weight $\tilde{\varphi} \circ S_f$ on $e (\cM \otimes \cB(L^2 G)) e$ is semi-finite.
\end{lem}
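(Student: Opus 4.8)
The plan is to mirror the proof of Lemma \ref{Lem=SemiFiniteCornerI} verbatim, with the roles of the two algebras and of the operator valued weight exchanged by one further level of crossed product duality: the operator valued weight $T\colon \cN^+ \to \cM^+_{{\rm ext}}$ and the inclusion $\pi(\cM)\subseteq \cN$ are replaced by the operator valued weight $S\colon (\cM\wot\cB(L^2G))^+ \to \cN^+_{{\rm ext}}$ and the inclusion $\cN\subseteq \cM\wot\cB(L^2G)$, where $\cN$ is identified with the fixed point algebra of $\beta$. The key computational input, playing the role that Lemma \ref{Lem=HaagerupKrausComputation} played in Lemma \ref{Lem=SemiFiniteCornerI}, is now the identity \eqref{Eqn=PhiSWeightIneq} of Lemma \ref{Lem=HaagerupKrausDual}, namely $\tilde{\varphi}\circ S_f(y^\ast y) = \Vert f\Vert_{L^2G}^2\,\tilde{\varphi}(y^\ast y)$ for $y\in\mathfrak{n}_{\tilde{\varphi}}$.

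First I would produce an approximate unit inside the larger ideal. Since $\tilde{\varphi}$ is a normal, semi-finite, faithful weight on $\cN$, its GNS left ideal $\mathfrak{n}_{\tilde{\varphi}}$ is $\sigma$-weakly dense in $\cN$, and applying \cite[Proposition II.3.3]{TakII} yields an increasing net $\{y_i\}_{i\in I}$ of positive elements of $\mathfrak{n}_{\tilde{\varphi}}$ with $y_i\nearrow 1$ strongly; viewing $\cN\subseteq\cM\wot\cB(L^2G)$, these $y_i$ are elements of the larger algebra as well. Next I would check that the compressions $z_i:=e y_i e$ lie in the GNS left ideal of $\tilde{\varphi}\circ S_f$. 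Because $e=\nu(\chi_F)$ is the support projection of $\nu(f)$ and $\mathrm{supp}(f)\subseteq F$, one has $\nu(f)e=\nu(f)=e\nu(f)$ and $\nu(\overline{f})e=\nu(\overline{f})$, so that $S_f(e\,(\cdot)\,e)=S_f(\cdot)$ on positive elements. Combined with the operator inequality $e y_i e y_i e\leq e y_i^2 e$ (valid since $e\leq 1$, hence $y_i e y_i\leq y_i^2$) and the monotonicity of the positive weight $\tilde{\varphi}\circ S_f$, this gives
\[
\tilde{\varphi}\circ S_f(z_i^\ast z_i)=\tilde{\varphi}\circ S_f(e y_i e y_i e)\leq \tilde{\varphi}\circ S_f(e y_i^2 e)=\tilde{\varphi}\circ S_f(y_i^2)=\Vert f\Vert_{L^2G}^2\,\tilde{\varphi}(y_i^2)<\infty,
\]
where the last equality is \eqref{Eqn=PhiSWeightIneq} applied to $y_i\in\mathfrak{n}_{\tilde{\varphi}}$.

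Finally I would conclude exactly as in Lemma \ref{Lem=SemiFiniteCornerI}: the net $z_i=e y_i e$ lies in $e\,\mathfrak{n}_{\tilde{\varphi}\circ S_f}\,e$, a left ideal of the corner $e(\cM\wot\cB(L^2G))e$, and since $y_i\nearrow 1$ strongly and $\Vert y_i\Vert\leq 1$ we have $z_i\to e$ strongly. A left ideal of a von Neumann algebra that contains a net converging strongly to the unit of the algebra (here $e$, the unit of the corner) is strongly, hence $\sigma$-weakly, dense; therefore the restriction of the normal weight $\tilde{\varphi}\circ S_f$ to $e(\cM\wot\cB(L^2G))e$ is semi-finite. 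The only genuinely delicate point — and the step I expect to require the most care — is the middle computation: making sure the support relations $\nu(f)=\nu(f)e=e\nu(f)$ really do let one cancel the compressions inside $S_f$ so that Lemma \ref{Lem=HaagerupKrausDual} applies, and verifying that $e\,\mathfrak{n}_{\tilde{\varphi}\circ S_f}\,e$ is indeed a left ideal of the corner. Both are routine once these support identities are recorded, so that the argument is essentially identical to that of Lemma \ref{Lem=SemiFiniteCornerI}.
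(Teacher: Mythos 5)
Your proof is correct, but it takes a genuinely different route to the key finiteness estimate than the paper does. The paper shares your skeleton (produce a net in $e\,\mathfrak{n}_{\tilde{\varphi}\circ S_f}\,e$ converging strongly to $e$, then conclude by density of the left ideal), but it manufactures the net explicitly: it takes an increasing net $x_i \nearrow 1$ in $\nphi$ and functions $h_j \in C_c(G)$ with $\Vert h_j \Vert_1 = 1$ and supports shrinking to the unit, sets $a_\alpha(s) = h_j(s)x_i$ so that $\mu(a_\alpha) = \lambda(h_j)\pi(x_i) \rightarrow 1$ strongly, and verifies $\tilde{\varphi}\circ S_f\left(e\mu(a_\alpha)^\ast e \mu(a_\alpha) e\right) < \infty$ via the explicit integral formula \eqref{Eqn=VarphiSfExpression} of Lemma \ref{Lem=HaagerupKrausDual} applied to elements of $C_c(G,\cM)$, where that formula is available by direct computation. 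You instead compress an abstract approximate unit $y_i$ of $\mathfrak{n}_{\tilde{\varphi}}$ by $e$, use the absorption identities $\nu(\overline{f})e = \nu(\overline{f})$ and $e\nu(f) = \nu(f)$ together with the operator inequality $y_i e y_i \leq y_i^2$, and reduce everything to the $F = G$ scaling identity \eqref{Eqn=PhiSWeightIneq}. This buys a shorter argument: you avoid the double-indexed net and the Fubini computation entirely, you never need the $\nu(\chi_F)$-version of the formula, and your absorption identities make explicit a point the paper's displayed computation leaves implicit (the disappearance of the outer compressions by $e$ inside $S_f$). The trade-off is that \eqref{Eqn=PhiSWeightIneq} is precisely the part of Lemma \ref{Lem=HaagerupKrausDual} that the paper obtains by the $\sigma$-weak/norm core extension argument (Lemma \ref{Lem=CoreLemma}), so your proof leans on the heavier half of that lemma, whereas the paper's only uses the elementary half on nice elements. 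Since Lemma \ref{Lem=HaagerupKrausDual} is proved in full before this lemma and independently of it, there is no circularity, and your remaining steps are all sound: $z_i^\ast z_i = e y_i e y_i e \leq e y_i^2 e$ by positivity of $y_i(1-e)y_i$, $\tilde{\varphi}(y_i^2) < \infty$ since $y_i \in \mathfrak{n}_{\tilde{\varphi}}$, $z_i \rightarrow e$ strongly because $0 \leq y_i \leq 1$, and a left ideal of the corner containing a net converging strongly to its unit $e$ is strongly, hence $\sigma$-weakly, dense, which is exactly the semi-finiteness of the restricted normal weight.
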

\begin{proof}
Let $\{ x_i \}_{i \in I}$ be an increasing net of positive elements in $\nphi$ such that $x_i \nearrow 1$ strongly. Such a net exists by applying \cite[Proposition II.3.3]{TakII} to $\mphi$, which is contained in the left ideal $\nphi$. The map $\pi: \cM \rightarrow \cN$ is normal so that it preserves suprema and hence $\pi(x_i) \rightarrow 1$ strongly in $\cN$. Let $\{h_j\}_{j \in J}$ be a net in $C_c(G)$ such that $\Vert h_j \Vert_1 = 1$ and with supports shrinking to the unit element of $G$. Then, $\lambda(h_j) \rightarrow 1$ strongly in $\cN$. Define $a_\alpha(s) = h_j(s) x_i$ with $\alpha = (i,j) \in I \times J$ with the natural net structure. By definition $\mu(a_\alpha) = \lambda(h_j) \pi(x_i)$. Since the product is strongly continuous on bounded sets, we find that $\mu(a_\alpha) \rightarrow 1$ strongly in $\cN$, hence in $\cM \wot \cB(L^2 G)$ which acts on the same Hilbert space. By Lemma \ref{Lem=HaagerupKrausDual} we see that
\[
\begin{split}
\tilde{\varphi} \circ S_f \left( e \mu(a_\alpha)^\ast e \mu(a_\alpha) e \right) = & \varphi\left( \int_G \int_F \vert h_j(s^{-1}t) \vert^2 dt\: x_i^\ast x_i\: ds \right) \\
= &\int_G \int_F \vert h_j(s^{-1} t) \vert^2 dt ds \: \varphi(x_i^\ast x_i) < \infty.
\end{split}
\]
This means that $e \mathfrak{n}_{\tilde{\varphi} \circ S_f} e$ contains a net that converges strongly to $e$. Since  $e \mathfrak{n}_{\tilde{\varphi} \circ S_f} e$ is a left ideal in  $e (\cM \wot\cB(L^2 G)) e$, it is therefore strongly dense in the latter algebra, which proves the lemma.
\end{proof}
}

\begin{prop}\label{Prop=CrossedProductEmbeddingUltraDual}
Let $\cM, \cN, \varphi, \tilde{\varphi}, \alpha, G$ be as in Section \ref{Sect=TypeIII}.  Let $f \in C_c(G)^+$  with $\Vert f \Vert_{L^2(G)} = 1$ and support equal to $F$ and set $e_f = \nu(\chi_{F})$. Let $\varphi_f$ be the restriction of the weight $\tilde{\varphi} \circ S_{f}$
to the corner algebra $e_f (\cM \wot B(L^2 G)) e_f$.   There exists a contractive map,
\[
V_f: L^2(\cN, \tilde{\varphi}) \rightarrow L^2(e_f(\cM \otimes \mathcal{B}(L^2 G)) e_f, \varphi_f):\:\: \Lambda_{\tilde{\varphi}}(x) \mapsto   \Lambda_{\varphi_f} (e_f x e_f).
\]
\end{prop}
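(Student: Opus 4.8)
The plan is to dualize the proof of Proposition \ref{Prop=CrossedProductEmbeddingUltra}, replacing the operator valued weight $T$ and the operators $\lambda(f)$ by their Pontrjagin duals $S$ and $\nu(f)$. By Lemma \ref{Lem=SemiFiniteCornerII} the weight $\varphi_f$ is a normal, semi-finite weight on $e_f(\cM \wot \cB(L^2 G))e_f$, so its GNS-map $\Lambda_{\varphi_f}$ is well-defined, while $\Lambda_{\tilde{\varphi}}(\mathfrak{n}_{\tilde{\varphi}})$ is a dense subspace of $L^2(\cN, \tilde{\varphi})$. It therefore suffices to show that for every $x \in \mathfrak{n}_{\tilde{\varphi}}$ the element $e_f x e_f$ lies in $\mathfrak{n}_{\varphi_f}$ and satisfies $\Vert \Lambda_{\varphi_f}(e_f x e_f) \Vert_2 \le \Vert \Lambda_{\tilde{\varphi}}(x) \Vert_2$; the contraction $V_f$ is then obtained by continuous extension.

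First I would record two elementary facts. Since $e_f = \nu(\chi_F) \le 1$, for any $x$ one has $x^\ast e_f x \le x^\ast x$, and applying the positive map $y \mapsto e_f y e_f$ gives the operator inequality $e_f x^\ast e_f x e_f \le e_f x^\ast x e_f$. Secondly, because the support of $f$ equals $F$ we have $\nu(\chi_F)\nu(f) = \nu(f)$ and $\nu(\overline{f})\nu(\chi_F) = \nu(\overline{f})$, whence $S_f(e_f x^\ast x e_f) = S(\nu(\overline{f})\, x^\ast x\, \nu(f)) = S_f(x^\ast x)$. Combining these with the monotonicity of the weight $\tilde{\varphi}\circ S_f$ and finally applying identity \eqref{Eqn=PhiSWeightIneq} of Lemma \ref{Lem=HaagerupKrausDual} together with $\Vert f \Vert_{L^2 G} = 1$, I obtain for $x \in \mathfrak{n}_{\tilde{\varphi}}$
\[
\varphi_f\bigl((e_f x e_f)^\ast (e_f x e_f)\bigr) = \tilde{\varphi}\circ S_f(e_f x^\ast e_f x e_f) \le \tilde{\varphi}\circ S_f(x^\ast x) = \tilde{\varphi}(x^\ast x) = \Vert \Lambda_{\tilde{\varphi}}(x) \Vert_2^2 .
\]
In particular $e_f x e_f \in \mathfrak{n}_{\varphi_f}$ and $\Vert \Lambda_{\varphi_f}(e_f x e_f) \Vert_2 \le \Vert \Lambda_{\tilde{\varphi}}(x) \Vert_2$, which is precisely the desired contractivity. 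Extending by density over $\Lambda_{\tilde{\varphi}}(\mathfrak{n}_{\tilde{\varphi}})$ yields $V_f$.

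This is a routine, if careful, dualization of Proposition \ref{Prop=CrossedProductEmbeddingUltra}, so I do not expect a serious obstacle. The only points demanding genuine attention are the two algebraic reductions above (the operator inequality and the support identity $\nu(\chi_F)\nu(f) = \nu(f)$), and the requirement that \eqref{Eqn=PhiSWeightIneq} — which, as stressed in the remark following Lemma \ref{Lem=HaagerupKrausDual}, equates the values of $\tilde{\varphi}\circ S_f$ and $\Vert f \Vert_{L^2 G}^2\,\tilde{\varphi}$ only on $\mathfrak{m}_{\tilde{\varphi}}$ — be invoked solely for elements of the form $x^\ast x$ with $x \in \mathfrak{n}_{\tilde{\varphi}}$, where its use is legitimate.
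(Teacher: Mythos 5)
Your argument is correct and is essentially the paper's own proof: the paper likewise reduces to $\Vert \Lambda_{\varphi_f}(e_f y e_f)\Vert_2^2 = \varphi_f(y^\ast e_f y) \leq \varphi_f(y^\ast y) = \tilde{\varphi}(y^\ast y)$ for $y \in \mathfrak{n}_{\tilde{\varphi}}$, using the absorption $\nu(\overline{f})e_f = \nu(\overline{f})$, $e_f\nu(f)=\nu(f)$ implicitly and invoking \eqref{Eqn=PhiSWeightIneq} exactly on elements $y^\ast y$ with $y \in \mathfrak{n}_{\tilde{\varphi}}$, where it is valid. Your write-up merely makes explicit the two algebraic reductions and the well-definedness $e_f x e_f \in \mathfrak{n}_{\varphi_f}$ that the paper leaves tacit.
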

\begin{proof}
For $y \in \mathfrak{n}_{\tilde{\varphi}}$ we find by \eqref{Eqn=PhiSWeightIneq} that
\[
\Vert \Lambda_{\varphi_f}(e_f y e_f) \Vert_2^2 =  \varphi_f(y^\ast e_f y) \leq \varphi_f(y^\ast y) = \tilde{\varphi}(y^\ast y) = \Vert \Lambda_{\tilde{\varphi}}( y ) \Vert_2^2.
\]
This proves that each $V_f$ is contractive.
\end{proof}

\begin{lem} \label{Lem=SFLStrongToOne}
Suppose that $G$ is amenable and let $(g_n)_{n=1}^{\infty}$ be an approximating sequence in $C_c(G)$. Define  for each $n \in \bn$ function $f_n = g_n^{\frac{1}{2}}$.  Then, for every $y \in \mathfrak{n}_{\tilde{\varphi}}$,
\[
\lim_{n \rightarrow \infty} \Vert \Lambda_{\tilde{\varphi}}(S_{f_n}(y)) - \Lambda_{\tilde{\varphi}}(y) \Vert_2 = 0.
\]
\end{lem}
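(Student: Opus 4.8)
The plan is to reduce the statement to a dense set of vectors on which $S_{f_n}$ can be evaluated explicitly, and then propagate the convergence by a uniform boundedness / $3\epsilon$-argument.

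First I would record that the maps $S_{f_n}$ induce contractions on $L^2(\cN,\tilde{\varphi})$. Since $f_n = g_n^{\frac12}$, we have $\Vert f_n\Vert_{L^2 G}^2 = \int_G g_n = \Vert g_n\Vert_1 = 1$, so the last statement of Lemma \ref{Lem=HaagerupKrausDual} gives $\tilde{\varphi}\circ S_{f_n}|_{\cN}\leq \Vert f_n\Vert_{L^2 G}^2\,\tilde{\varphi} = \tilde{\varphi}$. Hence by Lemma \ref{Lem=KadisonSchwarz} the prescription $T_n\colon \Lambda_{\tilde{\varphi}}(y)\mapsto \Lambda_{\tilde{\varphi}}(S_{f_n}(y))$, $y\in\mathfrak{n}_{\tilde{\varphi}}$, extends to a bounded operator with $\Vert T_n\Vert\leq \Vert S_{f_n}(1)\Vert^{\frac12}$; and a short computation using $\beta_t(\nu(g_n)) = \nu((g_n)_t)$ (see \eqref{Eqn=BetaNu}) together with left invariance shows $S_{f_n}(1) = S(\nu(g_n)) = \nu\left(\int_G (g_n)_t\,dt\right) = \nu(1) = 1$, so $\Vert T_n\Vert\leq 1$ for all $n$.

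Second, I would establish convergence on the core $\mu(B_\varphi)$ supplied by Lemma \ref{Lem=CoreLemma}; note that every $x\in B_\varphi$ lies in $C_c(G,\cM)$ with values in $\nphi$. The key computation is
\[
S_{f_n}(\mu(x)) = \mu(c_n\cdot x), \qquad (c_n\cdot x)(r) = c_n(r)\,x(r), \qquad c_n(r) = \int_G \overline{f_n(ru)}\,f_n(u)\,du.
\]
This follows by writing $S_{f_n}(\mu(x)) = \int_G \beta_t\bigl(\nu(\overline{f_n})\,\mu(x)\,\nu(f_n)\bigr)\,dt$, using that $\mu(x)$ is fixed by $\beta$ (as $\cN$ is the fixed-point algebra of $\beta$) and \eqref{Eqn=BetaNu} to reduce to $\int_G \nu(\overline{(f_n)_t})\,\mu(x)\,\nu((f_n)_t)\,dt$, then commuting $\lambda$ past $\nu$ and carrying out the scalar $t$-integral (the interchange being justified by Lemma \ref{Lem=Fubini}). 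Consequently, exactly as in the proof of Lemma \ref{Lem=HaagerupKrausDual} via \eqref{Eqn=TWeight} and Lemma \ref{Lem=Fubini},
\[
\Vert \Lambda_{\tilde{\varphi}}(S_{f_n}(\mu(x))) - \Lambda_{\tilde{\varphi}}(\mu(x))\Vert_2^2 = \int_G |c_n(s) - 1|^2\,\varphi(x(s)^\ast x(s))\,ds.
\]

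Third, I would prove that $c_n(r)\to 1$ for every $r\in G$; this is where amenability enters. Since $c_n(e) = \Vert g_n\Vert_1 = 1$ and $f_n = g_n^{\frac12}$ is real and non-negative, the Cauchy--Schwarz inequality together with the elementary bound $|a^{\frac12} - b^{\frac12}|^2\leq |a-b|$ for $a,b\geq 0$ gives
\[
|1 - c_n(r)|^2 \leq \int_G |g_n^{\frac12}(u) - g_n^{\frac12}(ru)|^2\,du \leq \int_G |g_n(u) - g_n(ru)|\,du = \Vert g_n - \delta_{r^{-1}}\ast g_n\Vert_1,
\]
which tends to $0$ by the defining property of the approximating sequence. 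As $c_n(s)\in[0,1]$, the integrand $|c_n(s)-1|^2\varphi(x(s)^\ast x(s))$ is dominated by the integrable function $\varphi(x(s)^\ast x(s))$ (integrable because $\int_G\varphi(x(s)^\ast x(s))\,ds = \Vert\Lambda_{\tilde{\varphi}}(\mu(x))\Vert_2^2<\infty$), so dominated convergence forces the displayed integral to $0$. A standard $3\epsilon$-estimate, using $\sup_n\Vert T_n\Vert\leq 1$ and the density of $\mu(B_\varphi)$, then upgrades this to the asserted convergence for all $y\in\mathfrak{n}_{\tilde{\varphi}}$. \textbf{The main obstacle} is the explicit evaluation $S_{f_n}(\mu(x)) = \mu(c_n\cdot x)$ --- getting the commutation of $\lambda$ past $\nu$ and the Fubini interchange correct --- together with the pointwise convergence $c_n\to 1$; the reduction to the core and the final extension are routine.
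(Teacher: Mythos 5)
Your proposal is correct and follows essentially the same route as the paper's proof: reduction to the core $\mu(B_\varphi)$ via the uniform $L^2$-bounds, identification of $S_{f_n}$ on crossed-product elements as multiplication by the scalar function $c_n(u)=\int_G f_n(ut)f_n(t)\,dt$ (the paper establishes exactly this via \eqref{Eqn=JustSomeComputation}, phrased for $y=\mu(x^\sharp \ast x)$ with $b_n = c_n\cdot(x^\sharp\ast x)$), and the identical amenability estimate $\vert c_n(u)-1\vert \leq \Vert g_n - \delta_{u^{-1}}\ast g_n\Vert_1^{\frac{1}{2}}$ obtained from Cauchy--Schwarz and $\vert a^{\frac12}-b^{\frac12}\vert^2\leq\vert a-b\vert$, followed by dominated convergence. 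Your two minor refinements --- computing $S_{f_n}(\mu(x))=\mu(c_n\cdot x)$ directly for $x\in B_\varphi$, so that density is immediate from Lemma \ref{Lem=CoreLemma}, and deducing genuine contractivity $\Vert T_n\Vert\leq 1$ from $S_{f_n}(1)=1$ rather than mere joint boundedness --- are sound but do not alter the substance of the argument.
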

\begin{proof}
Note that $S_{f_n}(y) \in \mathfrak{n}_{\tilde{\varphi}}$ by Lemma \ref{Lem=HaagerupKrausDual} and Lemma \ref{Lem=KadisonSchwarz}. In particular Lemma \ref{Lem=HaagerupKrausDual} implies that the family of maps $\Lambda_{\tilde{\varphi}}(y) \mapsto  \Lambda_{\tilde{\varphi}}(S_{f_n}(y))$ is jointly bounded. We need to prove that it converges to 1 strongly.

Let $x \in B_\varphi$  and put $y = \mu(x^\sharp \ast x)$. By Lemma \ref{Lem=CoreLemma} it suffices then to prove the equality displayed in the lemma for such $y$.  Put also $a_{n,t}(s) = f_n(s^{-1} t) x(s),\, t,s \in G, n \in \mathbb{N},$ and define the $\cM$-valued functions $b, b_n$ on $G$ by the formulas ($n \in \bn$, $u \in G$)
\[
\begin{split}
b_n(u) & = \int_G f_n(ut) f_n(t)  dt (x^\sharp \ast x)(u),\\
b(u) & = (x^\sharp \ast x)(u).
\end{split}
\]
Using \eqref{Eqn=JustSomeComputation} and the definition of $\mu$ we see that
\[
\int_G (a_{n,t}^\sharp \ast a_{n,t}) dt = \int_G \int_G \pi(x(r))^\ast \lambda(r^{-1})  \int_G f_n(r^{-1}t) f_n(s^{-1}t)  dt \lambda(s) \pi(x(s)) ds dr = \mu(b_n).
\]
Hence by \eqref{Eqn=SfExpression}
\[
\begin{split}
& \Vert \Lambda_{\tilde{\varphi}}(S_{f_n}(y)  ) - \Lambda_{\tilde{\varphi}}( y  ) \Vert_2^2 \\
= &  \Vert \Lambda_{\tilde{\varphi}}(  \int_G \mu( a^\sharp_{n,t} \ast a_{n,t} ) dt ) - \Lambda_{\tilde{\varphi}}( \mu(x^\sharp \ast x)  ) \Vert_2^2 \\
= & \Vert \Lambda_{\tilde{\varphi}} (\mu(b_n - b)) \Vert_2^2 \\
= & \tilde{\varphi}\left( \mu(b_n - b)^\ast \mu(b_n - b)  \right) \\
= & \varphi\left( ((b_n - b)^\sharp \ast (b_n - b))(e) \right) \\
= & \varphi\left(  \int_G \left| \int_G f_n(ut) f_n(t) dt  -1  \right|^2        (x^\sharp \ast x)(u)^\ast  (x^\sharp \ast x)(u)  du \right) \\
\rightarrow &\: 0.
\end{split}
\]
The limit $n \rightarrow \infty$ in the last line is justified by the Fubini Lemma \ref{Lem=Fubini}, the Lebesgue dominated convergence sequence theorem and the definition of the approximating sequences, as
\begin{align*}  \left|\int_G f_n(ut) f_n(t) dt  -1 \right| & = \left|\int_G (f_n(ut)-f_n(t)) f_n(t) dt  \right| \leq \left(\int_G |f_n(t)|^2 dt\right)^{\frac{1}{2}}
 \left(\int_G |f_n(t)-f_n(ut)|^2 dt\right)^{\frac{1}{2}} \\
 &\leq \left(\int_G |f_n(t)^2 -f_n(ut)^2 |dt\right)^{\frac{1}{2}} = \Vert g_n - \delta_{u^{-1}} \ast g_n\|_1^{\frac{1}{2}}.\end{align*}

\end{proof}
{

We may now conclude the dual of Theorem \ref{Thm=HPCrossedProduct}. Its proof follows -- of course -- along the same lines. The main conceptual difference lies in the fact that we require amenability of $G$.

\begin{thm}\label{Thm=HPCrossedProductDual}
Let $\cM$ be a von Neumann algebra. Let $G$ be a locally compact, amenable group. Let $\alpha: G \rightarrow \Aut(\cM)$ be a strongly continuous   group of automorphisms. Let $\cN := \cM \rtimes_{\alpha} G$ be the crossed product. If $\cM$ has the Haagerup property, then so does $\cN$.
\end{thm}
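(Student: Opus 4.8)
The plan is to mirror the construction in the proof of Theorem \ref{Thm=HPCrossedProduct}, exploiting the duality between the operator valued weight $S$ and the maps $S_f$ developed above; amenability enters only at the very last step. First I would fix a normal, semi-finite, faithful weight $\varphi$ on $\cM$ and pass to $P := \cM \wot \cB(L^2 G)$. Since $\cM$ has the Haagerup property, so does $P$ by Lemma \ref{Lem=HPTensor} and Proposition \ref{Prop=HPBofH}, and by weight independence (Theorem \ref{Thm=InvariantOfTheVNA}) this holds with respect to \emph{any} normal, semi-finite, faithful weight on $P$. Now take an approximating sequence $(g_n)$ in $C_c(G)$ (which exists precisely because $G$ is amenable) and set $f_n = g_n^{1/2}$, so that $\Vert f_n \Vert_{L^2 G} = 1$; write $e_n = \nu(\chi_{F_n})$ for the support projection of $\nu(f_n)$, where $F_n$ is the support of $f_n$, and let $\varphi_n$ be the restriction of $\tilde{\varphi} \circ S_{f_n}$ to the corner $e_n P e_n$.

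Then I would check that $\varphi_n$ is a normal, semi-finite, faithful weight on $e_n P e_n$: semifiniteness is Lemma \ref{Lem=SemiFiniteCornerII}, normality is clear, and faithfulness follows because $\tilde{\varphi}$ and $S$ are faithful together with the fact that $e_n$ is the range projection of $\nu(f_n)$. By Proposition \ref{Prop=HPCornerNonModular} the corner $e_n P e_n$ has the Haagerup property, and invoking weight independence once more (together with \cite[Lemma 3.3]{CasSkaII} to arrange contractivity) I obtain, for each $n$, approximating maps $\{\Phi_k^{(n)}\}_{k}$ witnessing the Haagerup property of $(e_n P e_n, \varphi_n)$ whose $L^2$-implementations $T_k^{(n)}$ are contractive, hence uniformly bounded in both $n$ and $k$. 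I then define normal, completely positive maps $\cN \to \cN$ by
\[
\Psi_k^{(n)}(y) = S_{f_n}\!\left( \Phi_k^{(n)}(e_n y e_n) \right), \qquad y \in \cN \subseteq P.
\]

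Next I would verify the three conditions of Definition \ref{Dfn=HP} for $(\cN, \tilde{\varphi})$. Condition \ref{Item=HPOne} follows from $\varphi_n \circ \Phi_k^{(n)} \leq \varphi_n$ together with the identity $S_{f_n}(e_n y e_n) = S_{f_n}(y)$ (valid since $\nu(f_n) e_n = \nu(f_n)$) and the key relation $\tilde{\varphi} \circ S_{f_n}(y) = \Vert f_n \Vert_{L^2 G}^2\, \tilde{\varphi}(y) = \tilde{\varphi}(y)$ from Lemma \ref{Lem=HaagerupKrausDual}. For Condition \ref{Item=HPTwo}, exactly as in Theorem \ref{Thm=HPCrossedProduct} I introduce the contraction $V_{f_n}$ of Proposition \ref{Prop=CrossedProductEmbeddingUltraDual} and a contraction $B_{f_n}: L^2(e_n P e_n, \varphi_n) \to L^2(\cN, \tilde{\varphi})$, $\Lambda_{\varphi_n}(z) \mapsto \Lambda_{\tilde{\varphi}}(S_{f_n}(z))$, whose boundedness comes from Lemma \ref{Lem=KadisonSchwarz} and the relation $S_{f_n}(e_n) = 1$; a direct computation then gives the intertwining $S_k^{(n)} = B_{f_n} T_k^{(n)} V_{f_n}$ for the $L^2$-implementation $S_k^{(n)}$ of $\Psi_k^{(n)}$, which is therefore compact with uniformly bounded norm.

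Finally, the main obstacle --- and the only place amenability is genuinely used --- is Condition \ref{Item=HPThree}. Here I would combine two limits: for fixed $n$, $B_{f_n} T_k^{(n)} V_{f_n} \Lambda_{\tilde{\varphi}}(y) \to B_{f_n} V_{f_n} \Lambda_{\tilde{\varphi}}(y) = \Lambda_{\tilde{\varphi}}(S_{f_n}(y))$ as $k \to \infty$ (using $T_k^{(n)} \to 1$ strongly), while $\Lambda_{\tilde{\varphi}}(S_{f_n}(y)) \to \Lambda_{\tilde{\varphi}}(y)$ as $n \to \infty$ by Lemma \ref{Lem=SFLStrongToOne}. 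Using separability of $L^2(\cN, \tilde{\varphi})$, a density argument over an increasing exhaustion of $\mathfrak{n}_{\tilde{\varphi}}$ by finite sets, the uniform bound on the $T_k^{(n)}$, and a $3\epsilon$-estimate, I would extract a single diagonal sequence (choosing $n$ large for the F\o{}lner estimate of Lemma \ref{Lem=SFLStrongToOne} and then $k = k(n)$ large for the strong convergence) whose $L^2$-implementations converge strongly to $1$. This produces a sequence witnessing the Haagerup property for $(\cN, \tilde{\varphi})$, and hence for $\cN$.
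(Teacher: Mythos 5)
Your construction from the approximating sequence onwards reproduces the paper's argument almost verbatim (Steps 0--3: the maps $\Psi_k^{(n)} = S_{f_n}\circ \Phi_k^{(n)}(e_n\,\cdot\,e_n)$, the contractions $V_{f_n}$ and $B_{f_n}$, the intertwining $S_k^{(n)} = B_{f_n} T_k^{(n)} V_{f_n}$, and the diagonal $3\epsilon$-extraction via Lemma \ref{Lem=SFLStrongToOne}), but there is a genuine gap at the very start: you never secure the hypothesis that $\varphi$ is $\alpha$-preserving, and the entire dual machinery you quote is established only under that assumption. The ``key relation'' \eqref{Eqn=PhiSWeightIneq} of Lemma \ref{Lem=HaagerupKrausDual} (and likewise \eqref{Eqn=VarphiSfExpression}, on which Lemma \ref{Lem=SemiFiniteCornerII}, Proposition \ref{Prop=CrossedProductEmbeddingUltraDual} and Lemma \ref{Lem=SFLStrongToOne} all depend) is stated for an $\alpha$-invariant weight, and the paper's Step 1 explicitly flags this use of invariance. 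For a general strongly continuous action there is no reason why $\cM$ should carry \emph{any} $\alpha$-invariant normal, semi-finite, faithful weight, so fixing an arbitrary $\varphi$ on $\cM$ does not suffice. Note that your passage to $P = \cM \wot \cB(L^2 G)$ is used only to manufacture the corners $e_n P e_n$; as written it does nothing to address invariance, since you keep the original action $\alpha$, the original crossed product $\cN$, and the dual weight $\tilde{\varphi}$ of the arbitrary $\varphi$.

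The repair is precisely the paper's opening move, and you have all the ingredients at hand: instead of merely tensoring $\cM$ with $\cB(L^2 G)$, replace the whole \emph{system}. Since $\cN \wot \cB(L^2 G) \cong (\cM \wot \cB(L^2 G)) \rtimes_\beta G$ (Enock--Schwartz), and since the Haagerup property passes both ways through amplification by $\cB(L^2 G)$ (Lemma \ref{Lem=HPTensor}, Proposition \ref{Prop=HPBofH}, Proposition \ref{Lem=DescentDownToTensor}), it suffices to prove the theorem for the action $\beta$ on $\cM \wot \cB(L^2 G)$ --- and that algebra does admit $\beta$-invariant normal, semi-finite, faithful weights, namely $\psi \circ S$ for any normal, semi-finite, faithful weight $\psi$ on $\cN$. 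With this reduction inserted, the rest of your argument goes through and coincides with the paper's proof. One further small imprecision: $\tilde{\varphi} \circ S_{f_n} = \tilde{\varphi}$ is not an identity of weights on $\cN$ --- the paper proves equality only on $\mathfrak{m}_{\tilde{\varphi}}$ together with the inequality $\tilde{\varphi}\circ S_{f_n}|_{\cN} \leq \Vert f_n \Vert_{L^2 G}^2\, \tilde{\varphi}$; luckily only the inequality is needed to verify condition \ref{Item=HPOne} of Definition \ref{Dfn=HP}, so this slip is cosmetic.
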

\begin{proof}
Without loss of generality, we may assume that there exists a normal, semi-finite, faithful weight $\varphi$ on $\cM$ for which the action of $\alpha$ is $\varphi$-preserving. Indeed, $\cM$ has the Haagerup property if and only if $\cM \wot \cB(L^2G)$ has the Haagerup property and $\cN$ has the Haagerup property if and only if $\cN \wot \cB(L^2 G)$ has the Haagerup property, see Proposition \ref{Prop=HPBofH}, Proposition \ref{Lem=DescentDownToTensor} and Lemma \ref{Lem=HPTensor}.  We have $\cN \wot\cB(L^2 G) = (\cM \wot \cB(L^2 G)) \rtimes_\beta G$ (again see \cite{EnoSch}), so it suffices to prove the theorem for the case $\alpha := \beta$ (and hence with $\cM$ and $\cN$ replaced with their tensor products with $\cB(L^2 G)$). But $\cM \wot \cB(L^2 G)$ has several $\beta$-invariant normal, semi-finite, faithful weights -- in fact every weight $\psi \circ S$ with $\psi$ a normal, semi-finite, faithful weight on $\cN$ provides such an example (see Subsection \ref{Subsection:crossprel} for the definition of $S$).

Since $G$ is amenable, it admits an approximating sequence $(g_n)_{n=1}^{\infty}$ in $C_c(G)$. Define for each $n \in \bn$, as before, the function  $f_n = g_n^{\frac{1}{2}}$, set $F_n = \textup{supp}\, f_n$ and the projection $e_n = \nu(\chi_{F_n})$.  Define the normal, semi-finite weight $\varphi_n$ on $\cM \wot \cB(L^2 G)$, c.f.\ Lemma \ref{Lem=HaagerupKrausDual}, by the formula
\[
\varphi_n(x) =  \tilde{\varphi} \circ S_{f_n}(x) = \tilde{\varphi} \circ S(\nu(f_n) x \nu(f_n)), \qquad  x \in (\cM \wot \cB(L^2 G))^+,
\]
and consider its restriction to the algebra $e_n (\cM \wot \cB(L^2 G)) e_n$. We observe that it remains then a faithful, normal semifinite weight: faithfulness can be shown (using the fact that the operator valued weight $S$ and the weight $\tilde{\varphi}$ are faithful) as in the beginning of the proof of Proposition \ref{Prop=CrossedProductEmbeddingUltra}, normality is clear (as $S$ and $\tilde{\varphi}$ are normal) and finally semi-finiteness follows from Lemma \ref{Lem=SemiFiniteCornerII}.
Using  Proposition \ref{Prop=HPCornerNonModular}, we then see that $(e_n (\cM \wot \cB(L^2 G)) e_n, \varphi_n)$ has the Haagerup property.

\vspace{0.3cm}

\noindent {\bf Step 0.} Now, let $\{ \Phi_k^{(n)} \}_{k \in \mathbb{N}}$ be the sequence of completely positive maps witnessing the Haagerup property of $(e_n (\cM \otimes \cB(L^2 G)) e_n, \varphi_n)$. That is, $\varphi_n \circ \Phi_k^{(n)} \leq \varphi_n$, the $L^2$-identification of the map $\Phi_k^{(n)}$, denoted by $T_k^{(n)}$, is compact and  $T_k^{(n)} \rightarrow 1$ strongly as $k \rightarrow \infty$. By the proof of Proposition \ref{Prop=HPCornerNonModular} we may in fact assume that $T_k^{(n)}$ is uniformly bounded both in $n$ and $k$.
Recall that naturally $\cN \subseteq \cM \wot \mathcal{B}(L^2 G)$. Using Lemma \ref{Lem=HaagerupKrausDual}, define the normal, completely positive map $\cN \rightarrow \cN$ by,
\[
\Psi_k^{(n)}(x) =  S_{f_n} \left( \Phi_k^{(n)}(  e_n x e_n )   \right), \qquad x \in \cN.
\]

\vspace{0.3cm}

\noindent {\bf Step 1.} We have the following estimate. We use respectively the definition of $\Psi_k^{(n)}$, the defining property of $\Phi_k^{(n)}$, an elementary equation and Lemma \ref{Lem=HaagerupKrausDual} (which uses $\alpha$-invariance of $\varphi$). For $y \in \cN^+$  we have,
\begin{equation}\label{Eqn=StatePreservingDual}
\begin{split}
\tilde{\varphi} ( \Psi_k^{(n)}(y)  ) = &  \tilde{\varphi} \circ S_{f_n} \left( \Phi_k^{(n)}(  e_n y e_n )   \right)
\leq   \tilde{\varphi} \circ S_{f_n} \left(   e_n y e_n     \right)  \\
 = & \tilde{\varphi} \circ S_{f_n} \left(     y       \right)
  \leq \tilde{\varphi}(y).
\end{split}
\end{equation}
This shows that $\Psi_k^{(n)}$ satisfies Definition \ref{Dfn=HP} \ref{Item=HPOne} for $(\cN, \tilde{\varphi})$ for every $k,n \in \mathbb{N}$.

\vspace{0.3cm}

\noindent {\bf Step 2.} Next, let us   prove that $\Psi_k^{(n)}$ also satisfies Definition \ref{Dfn=HP} \ref{Item=HPTwo} for every $k,n \in \mathbb{N}$. That is, we shall show that,
\[
S_k^{(n)}: \Lambda_{\tilde{\varphi}}(x) \mapsto \Lambda_{\tilde{\varphi}}(\Psi_k^{(n)}(x)), \qquad \forall \: x \in \mathfrak{n}_{\tilde{\varphi}},
\]
determines a compact operator on $L^2(\cN, \tilde{\varphi})$, with a norm that is uniformly bounded in $k,n \in \mathbb{N}$.
In order to do so, we need the following maps.
\begin{itemize}
\item Firstly, by Proposition \ref{Prop=CrossedProductEmbeddingUltraDual} there exists a contraction,
\[
V_n: L^2(\cN, \tilde{\varphi}) \rightarrow L^2(e_n(\cM \otimes \cB(L^2 G))e_n, \varphi_n): \Lambda_{\tilde{\varphi}}(x) \mapsto \Lambda_{\varphi_n}(e_n  x e_n).
\]
\item Secondly, it follows from Lemma \ref{Lem=HaagerupKrausDual} and Lemma \ref{Lem=KadisonSchwarz} that there exists a contractive map,
    \[
      A_n: L^2( e_n (\cM \otimes \cB(L^2 G) ) e_n, \varphi_n) \rightarrow L^2(\cN, \tilde{\varphi}): \Lambda_{\varphi_n}(y) \mapsto \Lambda_{\tilde{\varphi}}(S_{f_n}(y)).
    \]

\end{itemize}


Now, we claim that we have the following intertwining property:
\begin{equation}\label{Eqn=IntertwiningPropertyDual}
S_{k}^{(n)} =  A_n T_k^{(n)} V_n.
\end{equation}
Since $T_k^{(n)}$ is compact, this then implies that $S_k^{(n)}$ is compact. To prove the claim, let $y = \mu(x) \in \cN$ with $x \in B_\varphi$, see Lemma \ref{Lem=CoreLemma}. Then,
\[
\begin{split}
&  A_n T_k^{(n)} V_n \Lambda_{\tilde{\varphi}}(y) \\
=& A_n T_k^{(n)} \Lambda_{\varphi_n}(e_n y e_n) \\
= & A_n \Lambda_{\varphi_n}( \Phi_k^{(n)}(e_n y e_n) ) \\
 =&  \Lambda_{\tilde{\varphi}}  (  S_{f_n}( \Phi_k^{(n)}(e_n y e_n) ))  \\
 = & \Lambda_{\tilde{\varphi}}( \Psi_k^{(n)}(y) ) \\
= & S_k^{(n)} \Lambda_{\tilde{\varphi}}(y).
\end{split}
\]
The claim follows then by Lemma \ref{Lem=CoreLemma}.

\vspace{0.3cm}

\noindent {\bf Step 3.} We now arrive at choosing $k, n  \in \mathbb{N}$ properly in order to construct a sequence of maps $\Psi_k^{(n)}$.   For the remainder of the proof we fix  a finite subset $F \subseteq \mu(B_\varphi)$. In particular, $F \subseteq \mathfrak{n}_{\tilde{\varphi}}$. By Lemma \ref{Lem=SFLStrongToOne} we see  that there exists an $n \in \mathbb{N}$ such that for every $x \in F$ we have,
\[
\begin{split}
& \Vert A_n \Lambda_{\varphi_n}(e_n x e_n) - \Lambda_{\tilde{\varphi}}(x) \Vert_2 \\
= & \Vert \Lambda_{\tilde{\varphi}}( S_{f_n}(x) ) - \Lambda_{\tilde{\varphi}}(x)    \Vert_ 2 \leq \frac{1}{2 \vert F \vert}.
\end{split}
\]
 Next, we may choose $k \in \mathbb{N}$ such that for every $x \in F$ we have
\[
\Vert T_k^{(n)} \Lambda_{\varphi_n}( e_n x  e_n) - \Lambda_{\varphi_n}(e_n x  e_n) \Vert_2 \leq \frac{1}{2 \vert F \vert},
\]
by definition of $T_k^{(n)}$. In that case, for every $x \in F$, we have,
\[
\begin{split}
& \Vert A_n T_k^{(n)} \Lambda_{\varphi_n}(e_n x e_n) -  \Lambda_{\tilde{\varphi}}(x) \Vert_2\\
 \leq &
\Vert A_n T_k^{(n)} \Lambda_{ \varphi_n}(e_n x  e_n) - A_n \Lambda_{\varphi_n}(e_n x  e_n) \Vert_2 + \Vert A_n \Lambda_{ \varphi_n}(e_n x  e_n) -  \Lambda_{\tilde{\varphi}}( x) \Vert_2 \leq \frac{1}{\vert F \vert}.
\end{split}
\]
And hence, also for all $x \in F$,
\[
\Vert  A_n T_k^{(n)} V_n \Lambda_{\tilde{\varphi}}(x) - \Lambda_{\tilde{\varphi}}(x) \Vert \leq \frac{1}{\vert F\vert}.
\]
This concludes the proof of the theorem.
\end{proof}

}



\vspace{0.3cm}

\subsection*{Acknowledgements} AS acknowledges useful discussions with P. Fima on the topics studied in this article. The authors thank N. Ozawa and the anonymous referee for pointing out a mistake in an earlier version of this paper. The referee's comments have also led to some simplifications and clarifications in our proofs.

\end{document}